\newcommand{\xRightarrow}[2][]{\ext@arrow 0359\Rightarrowfill@{#1}{#2}}
\newcommand{\Anglearrow}[1]{\rotatebox{#1}{$\Rightarrow$}}
\subjclass[2010]{Primary: 55S45; Secondary: 18C20, 55P42, 19D23, 18D05}
\title{Stable Postnikov data of Picard 2-categories}
\date{22 February, 2017}
\theoremstyle{plain}
\newtheorem*{shh}{Stable Homotopy Hypothesis}
\newtheorem*{hh}{Homotopy Hypothesis}
\begin{document}

\begin{abstract}
Picard 2-categories are symmetric monoidal 2-categories with
invertible 0-, 1-, and 2-cells. The classifying space of a Picard
2-category $\mathcal{D}$ is an infinite loop space, the zeroth space of the
$K$-theory spectrum $K\mathcal{D}$. This spectrum has stable homotopy groups
concentrated in levels 0, 1, and 2. In this paper, we describe part
of the Postnikov data of $K\mathcal{D}$ in terms of categorical structure.
We use this to show that there is no strict skeletal Picard
2-category whose $K$-theory realizes the 2-truncation of the sphere
spectrum. As part of the proof, we construct a categorical
suspension, producing a Picard 2-category $\Sigma C$ from a Picard
1-category $C$, and show that it commutes with $K$-theory in that
$K\Sigma C$ is stably equivalent to $\Sigma K C$.
\end{abstract}

\maketitle


\section{Introduction}
This paper is part of a larger effort to refine and expand the theory
of algebraic models for homotopical data, especially that of
\emph{stable} homotopy theory.  Such modeling has been of interest
since \cite{May1974Einfty,Seg74Categories} gave $K$-theory functors which
build connective spectra from symmetric monoidal categories.
Moreover, Thomason \cite{Tho95Symmetric} proved that symmetric
monoidal categories have a homotopy theory which is equivalent to that
of all connective spectra.  

Our current work is concerned with constructing models for stable
homotopy 2-types using symmetric monoidal 2-categories.  Preliminary
foundations for this appear, for example, in \cite{ GO12Infinite,
  GJO2015KTheory, JO12Modeling, Sch2011Classification}.  In
forthcoming work \cite{GJOshh2} we prove that all stable homotopy
2-types are modeled by a special kind of symmetric monoidal
2-categories which we describe below and call \emph{strict Picard
  2-categories}.

Research leading to the methods in \cite{GJOshh2} has shown
that the most difficult aspect of this problem is replacing a
symmetric monoidal 2-category modeling an arbitrary connective
spectrum (see \cite{GJO2015KTheory}) by a strict Picard 2-category with
the same stable homotopy 2-type.  This paper can then be interpreted
as setting a minimum level of complexity for such a categorical model
of stable homotopy 2-types.  Furthermore, we intend to construct the
Postnikov tower for a stable homotopy 2-type entirely within a
categorical context, and the results here give some guidance as to the
assumptions we can make on those Postnikov towers.

This paper has three essential goals.  First, we explicitly describe
part of the Postnikov tower for strict Picard 2-categories.  Second,
and of independent interest, we show that the $K$-theory functor
commutes with suspension up to stable equivalence.  This allows us to
bootstrap previous results on Picard 1-categories to give algebraic
formulas for the two nontrivial Postnikov layers of a Picard
2-category. Third, we combine these to show that, while strict Picard
2-categories are expected to model all stable homotopy 2-types,
strict and \emph{skeletal} Picard 2-categories cannot.  We prove that
there is no strict and skeletal Picard 2-category modeling the
truncation of the sphere spectrum.

\subsection{Background and motivation}

Homotopical invariants, and therefore homotopy \emph{types}, often
have a natural interpretation as categorical structures.  The
fundamental groupoid is a complete invariant for homotopy 1-types,
while pointed connected homotopy 2-types are characterized by their
associated crossed module or $\cat^1$-group structure
\cite{Whi1949Combinatorial,MW19503type,BS76Ggroupoids,Loday82,conduche84}.
Such characterizations provide the low-dimensional cases of
Grothendieck's \emph{Homotopy Hypothesis}
\cite{grothendieck1983pursuing}.

\begin{hh}
  There is an equivalence of homotopy theories between
  $\mathpzc{Gpd}^n$, weak $n$-groupoids equipped with categorical
  equivalences, and $\mathpzc{Top}^n$, homotopy $n$-types equipped
  with weak homotopy equivalences.
\end{hh}

Restricting attention to stable phenomena, we replace homotopy
$n$-types with stable homotopy $n$-types: spectra $X$ such that $\pi_i
X = 0$ unless $0 \leq i \leq n$.  On the categorical side, we take a
cue from \cite{May1974Einfty,Tho95Symmetric} and replace $n$-groupoids with
a grouplike, symmetric monoidal version that we call Picard
$n$-categories.  The stable version of the Homotopy Hypothesis is then
the following.
\begin{shh}
  There is an equivalence of homotopy theories between
  $\mathpzc{Pic}^n$, Picard $n$-categories equipped with categorical
  equivalences, and $\mathpzc{Sp}_0^n$, stable homotopy $n$-types
  equipped with stable equivalences.
\end{shh}

For $n=0$, $\mathpzc{Pic}^0$ is the category of abelian groups
$\mathpzc{Ab}$ with weak equivalences given by group isomorphisms.  It
is equivalent to the homotopy theory of Eilenberg-Mac Lane
spectra. For $n=1$, a proof of the Stable Homotopy Hypothesis appears
in \cite{JO12Modeling}, and a proof for $n=2$ will appear in the
forthcoming \cite{GJOshh2}.  The advantage of being able to work with
categorical weak equivalences is that the maps in the homotopy
category between two stable 2-types modeled by strict Picard
2-categories are realized by symmetric monoidal pseudofunctors between
the two strict Picard 2-categories, instead of having to use general
zigzags. In fact, as will appear in \cite{GJOshh2}, the set of
homotopy classes between two strict Picard 2-categories $\cD$ and
$\cD'$ is the quotient of the set of symmetric monoidal pseudofunctors
$\cD\to \cD'$ by the equivalence relation $F\sim G$ if there exists a
pseudonatural transformation $F\Rightarrow G$.

More than a proof of the Stable Homotopy Hypothesis, we seek a
complete dictionary translating between stable homotopical invariants
and the algebra of Picard $n$-categories.  The search for such a
dictionary motivated three questions that lie at the heart of this
paper.  First, how can we express invariants of stable homotopy types
in algebraic terms?  Second, how can we construct stable homotopy
types of interest, such as Postnikov truncations of the sphere
spectrum, from a collection of invariants?  Third, can we make
simplifying assumptions, such as strict inverses, about Picard
$n$-categories without losing homotopical information?

The results in this paper provide key steps toward answering these
questions. In particular, we characterize the three stable homotopy
groups of a strict Picard 2-category in terms of equivalence classes
of objects, isomorphism classes of 1-cells, and 2-cells, respectively,
and deduce that a map of Picard 2-categories is a stable equivalence
if and only if it is a categorical equivalence
(\cref{prop:st-equiv-cat-equiv}). This fact is used in \cite{GJOshh2}
to prove the Stable Homotopy Hypothesis for $n = 2$.

\subsection{Postnikov invariants and strict skeletalization}

It has long been folklore that the symmetry in a Picard 1-category
should model the bottom $k$-invariant, $k_0$.  Along with a proof of
the Stable Homotopy Hypothesis in dimension 1, this folklore result was
established in \cite{JO12Modeling}.  
This shows that a Picard
1-category is characterized by exactly three pieces of data: an abelian
group of isomorphism classes of objects $(\pi_0)$, an abelian group of
automorphisms of the unit object $(\pi_1)$, and a group homomorphism
$k_0 \cn \pi_0 \otimes \mathbb{Z}/2 \to \pi_1$ (i.e., a stable
quadratic map from $\pi_0$ to $\pi_1$) corresponding to the symmetry.
Such a characterization is implied by the following result.
\begin{thm}[{\cite[Theorem
    2.2]{JO12Modeling}}]\label{thm:strskel_pic}
  Every Picard category is equivalent to one which is both strict and
  skeletal.
\end{thm}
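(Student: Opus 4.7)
The plan is to skeletalize $C$ first, transport its symmetric monoidal structure to the skeleton, and then strictify the associator and unitors using the flexibility of the coherence data, carefully preserving the symmetry, which encodes the only nontrivial Postnikov invariant of a Picard $1$-category.

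First I would choose a representative $s_{[x]}$ in each isomorphism class (with $s_{[I]} = I$) together with isomorphisms $\phi_x \colon x \to s_{[x]}$ (with $\phi_{s_{[x]}} = \mathrm{id}$), and let $C^s$ be the full subcategory on the representatives. The inclusion $C^s \hookrightarrow C$ is a categorical equivalence of underlying groupoids, and standard transport of structure along the $\phi_x$ produces a symmetric monoidal structure $(\otimes^s, I^s, \alpha^s, \lambda^s, \rho^s, \beta^s)$ on $C^s$ making the inclusion a symmetric monoidal equivalence. By construction, $\otimes^s$ acts on objects as the abelian group operation on $\pi_0 C$, so $(a \otimes^s b) \otimes^s c$ literally equals $a \otimes^s (b \otimes^s c)$ and $I^s \otimes^s a$ literally equals $a$ as objects of $C^s$. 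Consequently the transported constraints $\alpha^s$, $\lambda^s$, $\rho^s$ become families of automorphisms valued in $\pi_1 C$.

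The second step is to normalize $\alpha^s$, $\lambda^s$, $\rho^s$ to identities while preserving $\beta^s$. The pentagon axiom expresses $\alpha^s$ as a normalized $3$-cocycle on $\pi_0 C$ with coefficients in $\pi_1 C$, and changing the transport data $\phi_x$ alters $\alpha^s$ by the coboundary of a $2$-cochain. I would exhibit a $2$-cochain whose coboundary cancels $\alpha^s$ and which, by virtue of the triangle axiom, simultaneously cancels $\lambda^s$ and $\rho^s$. Absorbing this cochain into $\otimes^s$ on morphisms yields a new tensor product $\otimes'$ which is strictly associative and unital. The symmetry $\beta^s$ becomes a new symmetry $\beta'$ for $\otimes'$; the hexagons for $\beta'$ reduce to those for $\beta^s$ since the associator has now been trivialized, and $\beta'$ remains in general nontrivial, encoding the invariant $k_0$. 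Invertibility of objects is automatic, since $\pi_0 C$ is already a group.

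The main technical obstacle is the simultaneous trivialization of $\alpha^s$, $\lambda^s$, and $\rho^s$ by a single $2$-cochain, compatibly with the triangle axiom and without disturbing $\beta^s$. This is the step where the $1$-categorical setting is essential: the relevant symmetric cohomological obstruction sits in a degree where it can always be removed, while the symmetry data lives in an orthogonal part of the coherence structure and survives the modification. The analogous strictification fails in dimension two, obstructed by a genuine higher Postnikov invariant, as is one of the main results of the present paper.
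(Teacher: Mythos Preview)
The paper does not prove this theorem; it is quoted from \cite{JO12Modeling} as background and no argument is given here, so there is nothing in the present paper to compare your proposal against.

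As for the proposal itself: the overall strategy (skeletalize, transport the symmetric monoidal structure, then kill the associator cohomologically) is the right shape, but the decisive step is asserted rather than carried out. After transport, the associator $\alpha^s$ is a normalized $3$-cocycle on $\pi_0$ with values in $\pi_1$, and the symmetry $\beta^s$ is an additional datum tied to it by the hexagon. Modifying the monoidal structure by a $2$-cochain $k$ replaces $\alpha^s$ by $\alpha^s + dk$ \emph{and} replaces $\beta^s_{x,y}$ by $\beta^s_{x,y} \cdot k(y,x) \cdot k(x,y)^{-1}$. Thus to trivialize $\alpha^s$ while leaving $\beta^s$ untouched you need a \emph{symmetric} $2$-cochain with $dk = (\alpha^s)^{-1}$; you do not explain why such a $k$ exists. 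For categorical groups without symmetry this is false in general---the class of $\alpha^s$ in $H^3(\pi_0;\pi_1)$ is an invariant, which is precisely the Sinh/Baez--Lauda obstruction you allude to---so any argument must exploit the hexagon constraint linking $\alpha^s$ and $\beta^s$ in an essential way. Your final paragraph gestures at this (``the relevant symmetric cohomological obstruction sits in a degree where it can always be removed'') but supplies no mechanism. Until that step is actually done, what you have is an outline of what must be shown rather than a proof.
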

We call this phenomenon \emph{strict skeletalization}.  This theorem
is quite surprising given that it is false without the symmetry.
Indeed, Baez and Lauda \cite{BL2004Higher} give a good account of the
failure of strict skeletalization for 2-groups (the non-symmetric
version of Picard 1-categories), and how it leads to a cohomological
classification for 2-groups.  Johnson and Osorno \cite{JO12Modeling}
show, in effect, that the relevant obstructions are \emph{unstable}
phenomena which become trivial upon stabilization.

When we turn to the question of building models for specific homotopy
types, the strict and skeletal ones are the simplest: given a stable
1-type $X$, a strict and skeletal model will have objects equal to the
elements of $\pi_0 X$ and automorphisms of every object equal to
the elements of $\pi_1 X$, with no morphisms between distinct objects.
All that then remains is to define the correct symmetry isomorphisms,
and these are determined entirely by the map $k_0$.

As an example, a strict and skeletal model for the 1-truncation of the
sphere spectrum has objects the integers, each hom-set of
automorphisms the integers mod 2, and $k_0$ given by the identity map
on $\mathbb{Z}/2$ corresponding to the fact that the generating object
$1$ has a nontrivial symmetry with itself.  One might be tempted to build a strict
and skeletal model for the 2-type of the sphere spectrum (the authors
here certainly were, and such an idea also appears in
\cite[Example 5.2]{Bar14Quasistrict}).  But here we prove that this is not
possible for the sphere spectrum, and in fact a large class of stable
2-types.

\begin{thm}[\cref{thm:nostrskelmod}]
\label{thm:nostrskelmodintro}
  Let $\cD$ be a strict skeletal Picard 2-category with $k_0$
  surjective.  Then the 0-connected cover of $K\cD$ splits as a
  product of Eilenberg-Mac Lane spectra.  
  In particular, there is no strict and skeletal model
  of the 2-truncation of the sphere spectrum.
\end{thm}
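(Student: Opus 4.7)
The plan is to show that the $0$-connected cover $K\cD\langle 1\rangle$ is a spectrum with homotopy in only two degrees and with vanishing stable Postnikov invariant, hence splits as a product of Eilenberg--Mac Lane spectra. Since $K\cD$ has homotopy concentrated in degrees $0,1,2$, its $0$-connected cover has homotopy only in degrees $1$ and $2$, and is therefore classified up to stable equivalence by the triple $(\pi_1 K\cD,\; \pi_2 K\cD,\; k_1)$, where $k_1 \cn \pi_1 K\cD \otimes \mathbb{Z}/2 \to \pi_2 K\cD$ is the unique nontrivial stable Postnikov invariant. This spectrum splits as a product of Eilenberg--Mac Lane spectra if and only if $k_1 = 0$, so the first assertion reduces to proving the vanishing of $k_1$.

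Using the categorical description of the Postnikov data developed earlier in the paper, both $k_0$ and $k_1$ are available as explicit homomorphisms computed from the symmetry and syllepsis cells of $\cD$. The strategy is to establish a universal identity
\[
k_1 \circ (k_0 \otimes \mathrm{id}_{\mathbb{Z}/2}) = 0 \cn \pi_0\cD \otimes \mathbb{Z}/2 \otimes \mathbb{Z}/2 \longrightarrow \pi_2\cD
\]
valid for every strict skeletal Picard 2-category. Granted this identity, surjectivity of $k_0$ forces surjectivity of $k_0 \otimes \mathrm{id}_{\mathbb{Z}/2}$ (right exactness of $-\otimes \mathbb{Z}/2$), whence $k_1 = 0$ and the splitting follows.

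The main obstacle is the vanishing identity. My approach is to use the suspension comparison $K\Sigma C \simeq \Sigma KC$: starting from $\cD$ I would extract a Picard 1-category $C$ whose symmetry records the syllepsis of $\cD$ restricted to the image of $k_0$, so that, after a degree shift, the composite $k_1 \circ (k_0 \otimes \mathrm{id})$ is identified with the single $k_0$-invariant of $C$. The strict and skeletal hypotheses on $\cD$ should then force $C$ to be a strict symmetric monoidal 1-category with trivial braiding, so $k_0(C) = 0$. The delicate step is making this chain of reductions precise: it requires a careful analysis of how the coherence 2-cells of $\cD$ (associator, symmetry, syllepsis) degenerate under the strict and skeletal hypotheses, and it is the only place where those hypotheses are genuinely used.

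The second assertion is then immediate: the $2$-truncation $\tau_{\leq 2}\mathbb{S}$ has $\pi_0 = \mathbb{Z}$ and $\pi_1 = \pi_2 = \mathbb{Z}/2$, with both $k_0$ and $k_1$ given by multiplication by the Hopf element $\eta$. In particular $k_0$ is surjective while $k_1$ is nonzero, so a strict skeletal Picard 2-category modeling $\tau_{\leq 2}\mathbb{S}$ would, by the first assertion, have $k_1 = 0$, a contradiction.
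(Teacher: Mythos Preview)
Your overall architecture is correct and matches the paper: the $0$-connected cover has homotopy only in degrees $1$ and $2$, so it splits if and only if its single Postnikov invariant vanishes; that invariant is the composite $k_1 i_1 \cn \Sigma H\pi_1 \to \Sigma^3 H\pi_2$, and surjectivity of $k_0$ reduces the problem to showing $k_1 i_1 \circ k_0 = 0$. The deduction for the sphere is also correct.

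The genuine gap is in your vanishing argument. The paper does not extract an auxiliary Picard $1$-category with trivial braiding; indeed the natural candidate $\Omega\cD = \cD(e,e)$ has braiding given by the Gray structure cells $\Sigma_{f,g}$, and this braiding is \emph{not} trivial in general, even under the strict and skeletal hypotheses. What the paper proves is the much more targeted statement that $\Sigma_{s,g}$ is the identity whenever $s$ is a \emph{signature} $\beta_{x,x}\, x^*x^*$. This is \cref{lem:sigma-triv}, and it is a direct calculation from the Gray-monoid axioms: naturality of $\beta$ forces $\Sigma_{\beta_{y,z},h}$ and $\Sigma_{h,\beta_{y,z}}$ to be identities, and then the cubical interchange axiom propagates this to $\Sigma_{\beta_{x,x}x^*x^*,g}$. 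No suspension comparison is used here (that comparison enters earlier, in identifying $k_1 i_1$ with the map $[f] \mapsto \Sigma_{f,f}$ via \cref{lem:k1i1-given-by-symm-2}).

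The skeletal hypothesis is used at exactly one point, and not where you indicate. Since $\cD$ is skeletal, the equivalence $e \simeq x x^*$ has source equal to target, so one may choose it to be $\id_e$; with this choice the representative of $k_0(x)$ in $\cD(e,e)$ is literally $\beta_{x,x}\,x^*x^*$, not merely isomorphic to it. That is what allows \cref{lem:sigma-triv} to apply directly and give $\Sigma_{f,f} = \id$, hence $k_1 i_1([f]) = 0$. Your proposal does not isolate this step, and the phrase ``strict and skeletal hypotheses should then force $C$ to have trivial braiding'' is where the argument would break down if made precise.
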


Our proof of this theorem identifies both the bottom $k$-invariant
$k_0$ and the first Postnikov layer $k_1i_1$ (see
\cref{sec:hty-thy-Pic-cats}) of $K\cD$ explicitly using the symmetric
monoidal structure for any strict Picard 2-category
$\cD$. In addition, we provide a categorical
model of the 1-truncation of $K\cD$ in \cref{lem:1-truncation}. This
provides data which is necessary, although not sufficient, for a
classification of stable 2-types akin to the cohomological
classification in \cite{BL2004Higher}. Remaining data, to be studied
in future work, must describe the connection of $\pi_2$ with $
\pi_0$. For instance, stable 2-types $X$ with trivial $\pi_1$ are
determined by a map $H(\pi_0 X)\to \Sigma^3 H(\pi_2 X)$ in the stable
homotopy category.  For general $X$, the third cohomology group of the
1-truncation of $X$ with coefficients in $\pi_2X$ has to be
calculated. In the spectral sequence associated to the stable
Postnikov tower of $X$ (see \cite[Appendix B]{GM95Generalized}), the
connection between $\pi_0$ and $\pi_2$ becomes apparent in the form of
a $d_3$ differential.

In addition to clarifying the relationship between Postnikov
invariants and the property of being skeletal,
\cref{thm:nostrskelmodintro} suggests a direction for future work
developing a 2-categorical structure that adequately captures the
homotopy theory of stable 2-types.  Such structure ought to be more
specific than that of strict Picard 2-categories but more general than
strict, skeletal Picard 2-categories.  Interpretations of this
structure which are conceptual (in terms of other categorical
structures) and computational (in terms of homotopical or homological
invariants, say) will shed light on both the categorical and
topological theory.

\subsection{Categorical suspension}
In order to give a formula for the first Postnikov layer,
we must show that $K$-theory functors are compatible with
suspension. More precisely, given a strict monoidal category $C$, one
can construct a one-object 2-category $\Si C$, where the category of
morphisms is given by $C$, with composition defined using the monoidal
structure. Further, if $C$ is a permutative category then $\Si C$ is
naturally a symmetric monoidal 2-category, with the monoidal structure
also defined using the structure of $C$. Unstably, it is known that
this process produces a categorical delooping: if $C$ is a strict
monoidal category with invertible objects, the classifying space
$B(\Si C)$ is a delooping of $BC$ \cite{Jar91super,CCG10Nerves}. We prove 
the stable analogue.
\begin{thm*}[\cref{thm:sigmak_equals_ksigma}]
  For any permutative category $C$, the spectra $K(\Si C)$ and $\Si
  (KC)$ are stably equivalent.
\end{thm*}
Here $K(-)$ denotes both the $K$-theory spectrum associated to a
symmetric monoidal category \cite{May1974Einfty,Seg74Categories} and the
$K$-theory spectrum associated to a symmetric monoidal 2-category
\cite{GO12Infinite,GJO2015KTheory}.

This theorem serves at least three purposes beyond being a necessary
calculation tool.  A first step in the proof is \cref{cor:equivs}
which shows that the categories of permutative categories and of
one-object permutative Gray-monoids are equivalent; this is a strong
version of one case of the Baez-Dolan Stabilization Hypothesis
\cite{BD1998Higher}, stronger than the usual proofs in low dimensions
\cite{CG2007periodicI,CG2011periodicII,CG2014Iterated}.
The second purpose of this theorem is to justify, from a homotopical
perspective, the definition of permutative Gray-monoid, the
construction of the $K$-theory spectrum, and the categorical
suspension functor.  The suspension functor of spectra and the
$K$-theory spectrum of a permutative category are both central
features of stable homotopy theory, so any generalization of the
latter should respect the former.  A final purpose of this theorem
will appear in future work, namely in the categorical construction of
stable Postnikov towers.  Suspension spectra necessarily appear in
these towers, and \cref{thm:sigmak_equals_ksigma} and \cref{cor:equivs}
together allow us to replicate these features of a Postnikov tower
entirely within the world of symmetric monoidal 2-categories.

\subsection{Relation to supersymmetry and supercohomology}

The theory of Picard 2-categories informs recent work in mathematical
physics related to higher supergeometry \cite{Kap2015Supergeometry}
and invertible topological field theories
\cite{freed2014anomalies}.   In
\cite{Kap2015Supergeometry}, Kapranov links the $\bZ$-graded Koszul
sign rule appearing in supergeometry to the 1-truncation of the sphere
spectrum. He describes how higher supersymmetry is governed by higher
truncations of the sphere spectrum, which one expects to be modeled by
the free Picard $n$-category on a single object.  Likewise, Freed
\cite{freed2014anomalies} describes examples using the Picard
bicategory of complex invertible super algebras related to twisted
$K$-theory \cite{FHT2011loop}.

The failure of strict skeletalization for a categorical model of the
2-truncation of the sphere spectrum shows that already for $n=2$
capturing the full higher supersymmetry in algebraic terms is more
complicated than one might expect.

Furthermore, it would be interesting to relate examples appearing in
physics literature about topological phases of matter
\cite{gu2014symmetry,bhardwaj2016state} to cohomology with
coefficients in Picard $n$-categories. The \emph{super}-cohomology in
\cite{gu2014symmetry} is assembled from two different classical
cohomology groups of a classifying space $BG$ with a nontrivial
symmetry. One expects that this super-cohomology can be expressed as
the cohomology of $BG$ with coefficients in a Picard 1-category, and
similarly, for the extension of this super-cohomology in
\cite{bhardwaj2016state} as cohomology with coefficients in a Picard
2-category.

\subsection*{Outline}
In \cref{sec:pic-2-cat} we sketch the basic theory of Picard
categories and Picard 2-categories.  This includes some background to
fix notation and some recent results about symmetric monoidal
2-categories \cite{GJO2015KTheory}.  In \cref{sec:hty-thy-Pic-cats} we
develop algebraic models for some of the Postnikov data of the
spectrum associated to a Picard 2-category, giving formulas for the
two nontrivial layers in terms of the symmetric monoidal structure.
This section closes with applications showing that strict skeletal
Picard 2-categories cannot model all stable 2-types.
\Cref{sec:strictification} 
establishes formal strictification results for 2-categorical diagrams using 2-monad theory.  We use
those results in
\cref{sec:suspension-models-suspension} to prove that the $K$-theory
functor commutes with suspension.

\subsection*{Acknowledgements}
The authors gratefully acknowledge travel funding and hospitality from
The Ohio State University Mathematics Research Institute. The first
named author was supported by EPSRC EP/K007343/1, and the fourth by
SNSF grants 158932 and 151967. This material is partially based on
work supported by the National Science Foundation under Grant
No. 0932078 000 while the third and fourth named authors were in
residence at the Mathematical Sciences Research Institute in Berkeley,
California, during the Spring 2014 semester. This work was partially
supported by a grant from the Simons Foundation (\#359449,
Ang\'{e}lica Osorno).

The authors thank Jack Morava for encouraging them to explore the
relations to supersymmetry.  They also thank the referee whose
comments helped improve the exposition.

\section{Picard categories and Picard 2-categories}
\label{sec:pic-2-cat}

This section introduces the primary categorical structures of interest
which we call Picard 2-categories, as well as the particularly
relevant variant of strict skeletal Picard 2-categories.  Note that
we use the term 2-category in its standard sense \cite{KS74Review},
and in particular all composition laws are strictly associative and
unital.

\begin{notn}\label{notn:cat-iicat}
  We let $\cat$ denote the category of categories and functors, and
  let $\iicat$ denote the category of 2-categories and 2-functors.
  Note that these are both 1-categories.
\end{notn}

\begin{notn}\label{notn:cat2-iicat2}
We let $\cat_2$ denote the 2-category of categories, functors, and
natural transformations.  This can be thought
of as the 2-category of categories enriched in $\set$. Similarly, we
let $\iicat_2$ denote the 2-category of 2-categories, 2-functors and 2-natural
transformations; the 2-category of categories
enriched in $\cat$. 
\end{notn}

\subsection{Picard categories}

We will begin by introducing all of the 1-categorical notions before
going on to discuss their 2-categorical analogues.  First we recall
the notion of a permutative category (i.e., symmetric strict monoidal
category); the particular form of this definition allows an easy
generalization to structures on 2-categories.

\begin{defn}\label{defn:pc}
  A \textit{permutative category} $C$ consists of a strict monoidal
  category $(C, \oplus, e)$ together with a natural isomorphism,
  \[
  \xy
  (0,0)*+{C \times C}="00";
  (25,0)*+{C \times C}="10";
  (12.5,-10)*+{C}="01";
  {\ar^{\tau} "00"; "10"};
  {\ar^{\oplus} "10"; "01"};
  {\ar_{\oplus} "00"; "01"};
  (12.5,-4)*{\Anglearrow{40} \beta}
  \endxy
  \]
  where $\tau \cn C \times C \to C \times C$ is the symmetry
  isomorphism in $\Cat$, such that the following axioms hold for all
  objects $x,y,z$ of $C$.
  \begin{itemize}
  \item $\beta_{y,x} \beta_{x,y} = \id_{x \oplus y}$
  \item $\beta_{e,x} = \id_{x} = \beta_{x,e}$
  \item $\beta_{x, y \oplus z} = (y \oplus \beta_{x,z}) \circ (\beta_{x,y} \oplus z)$
  \end{itemize}
\end{defn}

\begin{rmk}\label{rmk:strictsmc}
  We will sometimes say that a symmetric monoidal structure on a
  category is strict if its underlying monoidal structure is.  Note
  that this does not imply that the symmetry is the identity, even
  though the other coherence isomorphisms are.  Thus a permutative
  category is nothing more than a strict symmetric monoidal category.
\end{rmk}

\begin{notn}\label{notn:pcat}
  Let $\pcat$ denote the category of permutative categories and
  symmetric, strict monoidal functors between them.
\end{notn}

Next we require a notion of invertibility for the objects in a
symmetric monoidal category.

\begin{defn}\label{defn:invertible1}
  Let $(C, \oplus, e)$ be a monoidal category.  An object $x$ is
  \emph{invertible} if there exists an object $y$ together with
  isomorphisms $x \oplus y \cong e$, $y \oplus x \cong e$.
\end{defn}

\begin{defn}\label{defn:pic1}
  A \emph{Picard category} is a symmetric monoidal category in which
  all of the objects and morphisms are invertible.
\end{defn}

The terminology comes from the following example.
\begin{example}
  Let $R$ be a commutative ring, and consider the symmetric monoidal
  category of $R$-modules.  We have the subcategory $\pic R$ of
  invertible $R$-modules and isomorphisms between them.  The set
  of isomorphism classes of objects of $\pic R$ is the classical
  Picard group of $R$.
\end{example}

\begin{rmk}
  If we drop the symmetric structure in \cref{defn:pic1} above, we get
  the notion of what is both called a categorical group
  \cite{JS1993btc} or a 2-group \cite{BL2004Higher}.  These are
  equivalent to crossed modules \cite{Whi1949Combinatorial,Loday82}, and hence
  are a model for pointed connected homotopy 2-types (i.e., spaces $X$ for
  which $\pi_{i}(X) = 0$ unless $i = 1, 2$).
\end{rmk}

One should consider Picard categories as a categorified version of
abelian groups.  Just as abelian groups model the homotopy theory of
spectra with trivial homotopy groups aside from $\pi_0$, Picard
categories do the same for spectra with trivial homotopy groups aside
from $\pi_0$ and $\pi_1$.

\begin{thm}[{\cite[Theorem
    1.5]{JO12Modeling}}]\label{thm:pic1_model_1types}
  There is an equivalence of homotopy theories between the category of
  Picard categories, $\mathpzc{Pic}^1$, equipped with categorical
  equivalences, and the category of stable 1-types,
  $\mathpzc{Sp}_0^1\,$, equipped with stable equivalences.
\end{thm}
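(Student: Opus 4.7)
The plan is to establish the equivalence by passing through the explicit algebraic classification of both sides, then upgrading to an equivalence of homotopy theories. First I would construct the $K$-theory functor $K\cn \mathpzc{Pic}^1 \to \mathpzc{Sp}_0^1$ using the Segal/May machine. For a Picard category $C$, the spectrum $KC$ is connective with $\pi_0(KC)$ equal to the group of isomorphism classes of objects, $\pi_1(KC) = \Aut(e)$, and $\pi_i(KC) = 0$ for $i \geq 2$ (the last because $BC$ has trivial higher homotopy and, since objects are invertible, no group completion is needed). Thus $K$ lands in $\mathpzc{Sp}_0^1$, and an easy check shows that $K$ carries symmetric monoidal equivalences to stable equivalences, since these induce equivalences of classifying spaces.

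Second, I would reduce both sides to the same classifying data. On the categorical side, \cref{thm:strskel_pic} lets me replace any Picard category by a strict skeletal one; such a strict skeletal object is specified uniquely by abelian groups $A=\pi_0$, $B=\pi_1$, and the collection $\{\beta_{x,x}\}\in B$ indexed by $x \in A$, which the axioms of \cref{defn:pc} force to assemble into a group homomorphism $q \cn A\otimes \bZ/2 \to B$ (a stable quadratic map). On the topological side, a stable 1-type $X$ is determined by $\pi_0 X$, $\pi_1 X$, and its first $k$-invariant, a class in $[H\pi_0 X, \Sigma^2 H\pi_1 X]$. By the standard calculation of stable cohomology operations in this range (equivalently, the structure of the Postnikov tower of $X$), such classes are in natural bijection with stable quadratic maps $\pi_0 X \otimes \bZ/2 \to \pi_1 X$.

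Third, and this is the computational heart of the argument, I would verify that under $K$ these two classifications correspond: the symmetry $\beta_{x,x}$ of a strict skeletal Picard category $C$ represents the first $k$-invariant of $KC$. This requires unwinding the Segal construction for $C$ to exhibit, at the level of the Postnikov square
\[
\begin{tikzcd}[ampersand replacement=\&]
KC \ar[r] \ar[d] \& H\pi_0(KC) \ar[d] \\
H\pi_1(KC)[1] \ar[r] \& H\pi_0(KC) \wedge (H\bZ/2)[2]
\end{tikzcd}
\]
that the attaching map is precisely the one encoded by $\beta_{-,-}$ on generators. Together with the previous step this gives a bijection on equivalence/homotopy classes of objects.

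Finally, I would upgrade this to an equivalence of homotopy theories rather than a mere bijection. This means showing that morphism sets in the homotopy category of $\mathpzc{Pic}^1$, presented as symmetric monoidal functors modulo monoidal natural isomorphism, map bijectively to stable homotopy classes of maps between the associated spectra, and that this is natural. Since the data classifying each side is the triple $(\pi_0, \pi_1, k_0)$ and morphisms on both sides are classified by pairs of group homomorphisms $(f_0,f_1)$ commuting with the quadratic map up to a prescribed coboundary term, the matching from the third step extends to morphisms. The main obstacle is the $k$-invariant identification in the third step: producing an explicit comparison between the categorically defined symmetry and the topologically defined attaching map requires a careful trip through the Segal construction and the stable quadratic description of $[H A, \Sigma^2 H B]$, and it is here that all the coherence axioms of a permutative category get used.
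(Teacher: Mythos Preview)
This theorem is not proved in the present paper; it is quoted as \cite[Theorem~1.5]{JO12Modeling} and used here only as background, so there is no in-paper proof to compare your proposal against.

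Your outline is reasonable, and your Step~3 is essentially \cref{lem:k0-given-by-symm-1} of this paper. The point that needs more care is Step~4: matching the invariants $(\pi_0,\pi_1,k_0)$ on objects, and compatible pairs $(f_0,f_1)$ on morphisms, gives an equivalence of \emph{homotopy categories}. An ``equivalence of homotopy theories'' is a priori stronger---one must know that the derived mapping spaces on both sides are homotopy discrete, so that the homotopy category determines the homotopy theory. That holds here, but it is an additional argument you have not sketched, and it is where the actual work beyond the classification lies.

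From what this paper reveals about \cite{JO12Modeling} (see the proof of \cref{lem:k0-given-by-symm-1}), the argument there is organized somewhat differently: after strict skeletalization they construct a concrete categorical action of a strict skeletal model $S$ of the $1$-truncated sphere spectrum on $C$ and prove (their Proposition~3.4) that it realizes the sphere-spectrum action on $KC$. This identifies $k_0$ with the symmetry directly, without first invoking the abstract Eilenberg--Mac Lane description of $[HA,\Sigma^2 HB]$; your route instead takes that computation as input and then matches. Both approaches land on the same classification, but theirs builds the comparison map geometrically while yours assembles it from the Postnikov data.
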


Forthcoming work \cite{GJOshh2} proves the 2-dimensional analogue
of \cref{thm:pic1_model_1types}.  This requires a theory of Picard
2-categories which began in \cite{GJO2015KTheory} and motivated the
work of the current paper.  We now turn to such theory.

\subsection{Picard 2-categories}\label{sec:pic-2-cats}

To give the correct 2-categorical version of Picard categories, we
must first describe the analogue of a mere strict monoidal category:
such a structure is called a Gray-monoid.  It is most succinctly
defined using the Gray tensor product of 2-categories, written $\cA
\otimes \cB$ for a pair of 2-categories $\cA$, $\cB$. We will not give
the full definition of $\otimes$ here (see
\cite{GJO2015KTheory,Gurski13Coherence,BG2015cocategorical,BG2015Gray})
but instead give the reader the basic idea.  The objects of $\cA
\otimes \cB$ are tensors $a \otimes b$ for $a \in \cA, b \in \cB$, but
the 1-cells are \emph{not} tensors of 1-cells as one would find in the
cartesian product.  Instead they are generated under composition by
1-cells $f \otimes 1$ and $1 \otimes g$ for $f\cn a \to a'$ a 1-cell
in $\cA$ and $g\cn b \to b'$ a 1-cell in $\cB$.  These different kinds
of generating 1-cells do not commute with each other strictly, but
instead up to specified isomorphism 2-cells
\[
\Si_{f,g} \cn (f \otimes 1)\circ(1 \otimes g) \cong (1 \otimes g)\circ(f \otimes 1)
\]
which obey appropriate naturality and bilinearity axioms.  We call
these $\Si$ the \emph{Gray structure 2-cells}.  The 2-cells of $\cA
\otimes \cB$ are defined similarly, generated by $\al \otimes 1, 1
\otimes \be$, and the $\Si_{f,g}$.  The function $(\cA, \cB) \mapsto
\cA \otimes \cB$ is the object part of a functor of categories
\[
\iicat \times \iicat \to \iicat
\]
which is the tensor product for a symmetric monoidal structure on
$\iicat$ with unit the terminal 2-category.

\begin{defn}\label{defn:gm}
  A \emph{Gray-monoid} is a monoid object $(\cD, \oplus, e)$ in the
  monoidal category $(\iicat, \otimes)$.
\end{defn}

\begin{rmk}
  By the coherence theorem for monoidal bicategories
  \cite{GPS95Coherence,Gurski13Coherence}, every monoidal bicategory
  is equivalent (in the appropriate sense) to a Gray-monoid.  There is
  a stricter notion, namely that of a monoid object in $(\iicat,
  \times)$, but a general monoidal bicategory will not be equivalent
  to one of these.
\end{rmk}

We now turn to the symmetry.

\begin{defn}\label{defn:pgm}
  A \textit{permutative Gray-monoid} $\cD$ consists of a Gray-monoid
  $(\cD, \oplus, e)$ together with a 2-natural isomorphism,
  \[
  \xy
  (0,0)*+{\cD \otimes \cD}="00";
  (25,0)*+{\cD \otimes \cD}="10";
  (12.5,-10)*+{\cD}="01";
  {\ar^{\tau} "00"; "10"};
  {\ar^{\oplus} "10"; "01"};
  {\ar_{\oplus} "00"; "01"};
  (12.5,-4)*{\Anglearrow{40}\beta}
  \endxy
  \]
  where $\tau \cn \cD \otimes \cD \to \cD \otimes \cD$ is the symmetry
  isomorphism in $\IICat$ for the Gray tensor product, such that the
  following axioms hold.
  \begin{itemize}
  \item The following pasting diagram is equal to the identity
    2-natural transformation for the 2-functor $\oplus$.
    \[
    \xy
    (0,0)*+{\cD \otimes \cD}="00";
    (25,0)*+{\cD \otimes \cD}="10";
    (50,0)*+{\cD \otimes \cD}="20";
    (25,-15)*+{\cD}="11";
    {\ar^{\tau} "00"; "10"};
    {\ar^{\tau} "10"; "20"};
    {\ar_{\oplus} "00"; "11"};
    {\ar_{\oplus} "10"; "11"};
    {\ar^{\oplus} "20"; "11"};
    {\ar@/^1.5pc/^1 "00"; "20"};
    (14.5,-4)*{\scriptstyle \Anglearrow{40} \beta};
    (35.5,-4)*{\scriptstyle \Anglearrow{0} \beta}
    \endxy
    \]
  \item The following pasting diagram is equal to the identity
    2-natural transformation for the canonical isomorphism
    $1 \otimes \cD \cong \cD$.
    \[ \xy
    (0,0)*+{1 \otimes \cD}="00";
    (30,0)*+{\cD \otimes \cD}="10";
    (60,0)*+{\cD \otimes \cD}="20";
    (30,-12)*+{\cD}="11";
    {\ar^{e \otimes \id} "00"; "10"};
    {\ar^{\tau} "10"; "20"};
    {\ar^{\oplus} "20"; "11"};
    {\ar_{\oplus} "10"; "11"};
    {\ar_{\cong} "00"; "11"};
    (17,-4)*{\scriptstyle =}; 
    (40, -4)*{\scriptstyle \Anglearrow{0} \beta}
    \endxy
    \]
  \item The following equality of pasting diagrams holds where we have
    abbreviated the tensor product to concatenation when labeling 1-
    or 2-cells.
    \[ 
    \xy
    0;<.95mm,0mm>:<0mm,1mm>:: 
    (3,-10)*+{\cD^{\otimes 3}}="00";
    (18,0)*+{\cD^{\otimes 3}}="10";
    (36,0)*+{\cD^{\otimes 3}}="20";
    (51,-10)*+{\cD^{\otimes 2}}="30";
    (27,-15)*+{\cD^{\otimes 2}}="11";
    (18,-30)*+{\cD^{\otimes 2}}="12";
    (36,-30)*+{\cD}="33";
    (69,-10)*+{\cD^{\otimes 3}}="40";
    (84,0)*+{\cD^{\otimes 3}}="50";
    (102,0)*+{\cD^{\otimes 3}}="60";
    (117,-10)*+{\cD^{\otimes 2}}="70";
    (84,-30)*+{\cD^{\otimes 2}}="52";
    (102,-30)*+{\cD}="73";
    (102,-17)*+{\cD^{\otimes 2}}="63";
    {\ar^{\scriptstyle \tau \id} "00"; "10"};
    {\ar^{\scriptstyle \tau \id} "40"; "50"};
    {\ar^{\scriptstyle \id \tau } "10"; "20"};
    {\ar^{\scriptstyle \id \tau } "50"; "60"};
    {\ar^{\scriptstyle \oplus \id} "20"; "30"};
    {\ar^{\scriptstyle \oplus \id} "60"; "70"};
    {\ar^{\scriptstyle \oplus} "30"; "33"};
    {\ar^{\scriptstyle \oplus} "70"; "73"};
    {\ar_{\scriptstyle \oplus \id} "00"; "12"};
    {\ar_{\scriptstyle \oplus \id} "40"; "52"};
    {\ar_{\scriptstyle \oplus} "12"; "33"};
    {\ar_{\scriptstyle \oplus} "52"; "73"};
    {\ar_{\scriptstyle \id \oplus} "00"; "11"};
    {\ar^{\scriptstyle \tau} "11"; "30"};
    {\ar_{\scriptstyle \oplus} "11"; "33"};
    {\ar^{\scriptstyle \oplus \id} "50"; "52"};
    {\ar_{\scriptstyle \id \oplus} "50"; "63"};
    {\ar^{\scriptstyle \id \oplus} "60"; "63"};
    {\ar^{\scriptstyle \oplus} "63"; "73"};
    (27,-7.5)*{=}; (19,-21)*{=}; (108,-12)*{=}; (93,-20)*{=}; (59,-15)*{=};
    (37,-19)*{\scriptstyle \Anglearrow{40} \beta}; 
    (78.4,-12.5)*{\scriptstyle \Anglearrow{40} \beta \id}; 
    (96,-5)*{\scriptstyle \Anglearrow{40} \id \beta};
    \endxy
    \]
  \end{itemize}
\end{defn}
\begin{rmk}\label{defn:sm2cat}
  A symmetric monoidal 2-category is a symmetric monoidal bicategory
  (see \cite{GJO2015KTheory} for a sketch or \cite{McCru00Balanced}
  for full details) in which the underlying bicategory is a
  2-category. Every symmetric monoidal bicategory is equivalent as
  such to a symmetric monoidal 2-category by strictifying the
  underlying bicategory and transporting the structure as in
  \cite{Gur2012Biequivalences}.  A deeper result is that every
  symmetric monoidal bicategory is equivalent as such to a permutative
  Gray-monoid; this is explained fully in \cite{GJO2015KTheory},
  making use of \cite{Sch2011Classification}.
\end{rmk}

\begin{notn}
  For convenience and readability, we use following notational conventions for cells in a Gray-monoid $\cD$.
  \begin{itemize}
  \item For objects, we may use concatenation instead of explicitly
    indicating the monoidal product.
  \item For an object $b$ and a 1-cell $f\colon a \to a'$, we denote
    by $fb$ the 1-cell in $\cD$ which is the image under $\oplus$ of
    $f\otimes 1\colon a\otimes b \to a'\otimes b$ in $\cD \otimes
    \cD$.  We use similar notation for multiplication on the other
    side, and for 2-cells.
  \item We let $\Sigma_{f,g}$ also denote the image in $\cD$ of the
    Gray structure 2-cells under $\oplus$:
    \[
    \Sigma_{f,g} \cn (fb')\circ(ag) \cong (a' g)\circ(f b).
    \]
  \end{itemize}
\end{notn}

\begin{notn}
  Let $\pgm$ denote the category of permutative Gray-monoids and
  strict symmetric monoidal 2-functors between them.
\end{notn}

We are actually interested in permutative Gray-monoids which model
stable homotopy 2-types, and we therefore restrict to those in which
all the cells are invertible.  We begin by defining invertibility in a
Gray-monoid, then the notion of a Picard 2-category, and finish with
that of a strict skeletal Picard 2-category.

\begin{defn}\label{defn:invertible2}
  Let $(\cD, \oplus, e)$ be a Gray-monoid.
  \begin{enumerate}
  \item A 2-cell of $\cD$ is invertible if it has an inverse in the
    usual sense.
  \item A 1-cell $f \cn x \to y$ is invertible if there exists a
    1-cell $g \cn y \to x$ together with invertible 2-cells
    $g\circ f \cong \id_{x}$, $f\circ g \cong \id_{y}$.  In other words, $f$ is
    invertible if it is an internal equivalence (denoted with the
    $\simeq$ symbol) in $\cD$.
  \item An object $x$ of $\cD$ is invertible if there exists another
    object $y$ together with invertible 1-cells $x \oplus y \simeq e$,
    $y \oplus x \simeq e$.
  \end{enumerate}
\end{defn}

\begin{rmk}
  The above definition actually used none of the special structure of
  a Gray-monoid that is not also present in a more general monoidal
  bicategory.
\end{rmk}

\begin{defn}\label{defn:pic2}
  A \emph{Picard 2-category} is a symmetric monoidal 2-category
  (see \cref{defn:sm2cat}) in which all of the objects, 1-cells, and
  2-cells are invertible.  A \emph{strict Picard 2-category} is a
  permutative Gray-monoid which is a Picard 2-category.
\end{defn}

\begin{rmk}
  Note that the definition of a strict Picard 2-category does not
  require that cells be invertible in the strict sense, i.e., having
  inverses on the nose rather than up to mediating higher cells.  It
  only requires that the underlying symmetric monoidal structure is
  strict in the sense of being a permutative
  Gray-monoid.  
\end{rmk}

\begin{defn}\label{defn:skeletal2}
  A 2-category $\cA$ is \emph{skeletal} if the following condition
  holds: whenever there exists an invertible 1-cell $f\cn x \simeq y$,
  then $x=y$.
\end{defn}

\begin{rmk}
  This definition might more accurately be named \emph{skeletal on
    objects}, as one could impose a further condition of being
  skeletal on 1-cells as well.  We have no need of this further
  condition, and so we work with this less restrictive notion of a
  skeletal 2-category.  It is also important to remember that, in the
  definition above, the invertible 1-cell $f$ need not be the identity
  1-cell.  The slogan is that ``every equivalence is an
  autoequivalence'': an object is allowed to have many non-identity
  autoequivalences, and there can be 1-cells between different objects
  as long as they are not equivalences.
\end{rmk}

\begin{defn}\label{defn:strskel_pic2}
  A \emph{strict skeletal Picard 2-category} is a strict Picard
  2-category whose underlying 2-category is skeletal.
\end{defn}

\subsection{Two adjunctions}\label{sec:two-adjunctions}
Our goal in this subsection is to present two different adjunctions
between strict Picard categories and strict Picard 2-categories.
While we focus on the categorical algebra here, later we will give
each adjunction a homotopical interpretation.  The unit of the first
adjunction will categorically model Postnikov 1-truncation
(\cref{lem:1-truncation}), universally making $\pi_2$ zero, while the
counit of the second will categorically model the 0-connected cover
(\cref{prop:susp-conn-cover}).

Recall that for any category $C$, we have its set of path components
denoted $\pi_0 C$; these are given by the path components of the nerve
of $C$, or equivalently by quotienting the set of objects by the
equivalence relation generated by $x \sim y$ if there exists an arrow
$x \to y$.  This is the object part of a functor $\pi_0 \cn \cat \to
\set$, and it is easy to verify that this functor preserves finite
products.  It is also left adjoint to the functor $d \cn \set \to
\cat$ which sends a set $S$ to the discrete category with the same set
of objects.  Being a right adjoint, $d$ preserves all products.  The
counit $\pi_0 \circ d \Rightarrow \id$ is the identity, and the unit
$\id \Rightarrow d \circ \pi_0$ is the quotient functor $C \to d\pi_0
C$ sending every object to its path component and every morphism to
the identity.
Since $d$ and $\pi_0$ preserve products, by
applying them to hom-objects they induce change of enrichment functors
$d_*$ and $(\pi_0)_*$, respectively. We obtain the following result.

\begin{lem}\label{lem:pi0_lifts}
The adjunction $\pi_0 \dashv d$ lifts to a 2-adjunction 
\[
 \begin{xy}
    0;<15mm,0mm>:<0mm,10mm>:: 
    (-1,0)*+{\iicat_2}="2cat";
    (1,0)*+{\cat_2.}="cat";
    {\ar@/^3mm/^-{(\pi_0)_*} "2cat"; "cat"};
    {\ar@/^3mm/^-{d_*} "cat"; "2cat"};
    (0,0)*+{\bot}="v";
  \end{xy}
\]
\end{lem}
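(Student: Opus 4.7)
The plan is to deduce this from standard enriched-category theory. Since $\pi_0$ and $d$ both preserve finite products, they canonically refine to strong symmetric monoidal functors between $(\cat,\times)$ and $(\set,\times)$; in a cartesian setting the unit and counit are automatically monoidal, so $\pi_0\dashv d$ is a symmetric monoidal adjunction. I would then invoke the standard change-of-base machinery (as in Eilenberg--Kelly) to lift this to a 2-adjunction between the 2-categories of categories enriched in $\cat$ and in $\set$, which are exactly $\iicat_2$ and $\cat_2$.

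For a hands-on verification I would spell out the four pieces of data and check them directly. The 2-functor $(\pi_0)_*$ sends a 2-category $\cA$ to the 1-category with the same objects and hom-sets $\pi_0\bigl(\cA(a,a')\bigr)$, and $d_*$ sends a 1-category $C$ to $C$ regarded as a locally discrete 2-category. The counit at $C$ is the identity functor, since $(\pi_0)_* d_* C = C$ on the nose. The unit at $\cA$ is the identity-on-objects 2-functor $\cA \to d_*(\pi_0)_*\cA$ whose action on each hom-category is the unit $\cA(a,a') \to d\pi_0\cA(a,a')$ of the base adjunction; this necessarily sends every 2-cell to an identity, because a 2-isomorphism of 1-cells in $\cA$ is mapped to an equality of isomorphism classes in the discrete target.

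The remaining verifications --- that $(\pi_0)_*$ and $d_*$ are 2-functors (acting on 2-natural transformations by applying $\pi_0$ or $d$ hom-wise), that the unit and counit are 2-natural, and that the two triangle identities hold --- all reduce hom-wise to facts already in hand: functoriality of $\pi_0$ and $d$, naturality of $\eta$ and $\varepsilon$, and the 1-categorical triangle identities for $\pi_0 \dashv d$. I do not anticipate a serious obstacle. The one mild subtlety is that one must track \emph{strict} 2-natural transformations, but since $d_*C$ has only identity 2-cells, any transformation of 2-functors landing in $d_*C$ is 2-natural if and only if its underlying family of 1-cells is 1-natural in the ordinary sense, so no coherence beyond the 1-dimensional adjunction needs to be established.
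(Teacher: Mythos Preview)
Your proposal is correct and follows essentially the same approach as the paper, which states the lemma without proof after noting that $\pi_0$ and $d$ preserve products and hence induce change-of-enrichment functors. You simply flesh out this change-of-base argument (via Eilenberg--Kelly, together with a direct verification of the unit, counit, and triangle identities) that the paper leaves implicit.
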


\begin{notn}\label{pi0*=_1}
  We will write the functor $(\pi_0)_{*}$ as $\cD \mapsto \cD_1$ to
  lighten the notation.  This anticipates the homotopical
  interpretation in \cref{lem:1-truncation}.  Furthermore, we will
  write $d_*$ as $d$, it will be clear from context which functor we
  are using.
\end{notn}

\begin{lem}\label{prop:_1ssm}
  The functor $\cD \mapsto \cD_1$ is strong symmetric monoidal
  $(\iicat, \otimes) \to (\cat, \times)$.  The functor $d$ is lax
  symmetric monoidal $(\cat, \times) \to (\iicat, \otimes)$.
\end{lem}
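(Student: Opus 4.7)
The plan is to first verify that $(-)_1$ is strong symmetric monoidal by exhibiting a natural isomorphism $\phi_{\cA,\cB} \colon \cA_1 \times \cB_1 \cong (\cA \otimes \cB)_1$, and then obtain the lax symmetric monoidal structure on $d$ by doctrinal adjunction applied to \cref{lem:pi0_lifts}.

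First I would construct $\phi_{\cA, \cB}$. The key observation is that each Gray structure 2-cell $\Sigma_{f,g}$ is invertible, so it becomes an equality after applying $\pi_0$ to each hom-category of $\cA \otimes \cB$. Define $\phi_{\cA,\cB}$ as the identity on objects and on morphisms by $([f], [g]) \mapsto [(1 \otimes g) \circ (f \otimes 1)]$. Well-definedness follows because any 2-cell $\alpha \colon f \Rightarrow f'$ in $\cA$ yields a 2-cell $\alpha \otimes 1$ in $\cA \otimes \cB$. Functoriality is a short calculation using $\Sigma_{f, g'}$ to swap $(1 \otimes g')$ past $(f \otimes 1)$, which becomes an equality in $(\cA \otimes \cB)_1$.

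The main obstacle will be showing $\phi_{\cA, \cB}$ is bijective on hom-sets. Surjectivity uses the explicit description of the Gray tensor product (see, e.g., \cite{Gurski13Coherence, BG2015Gray}): every 1-cell of $\cA \otimes \cB$ is a composite of generators $f \otimes 1$ and $1 \otimes g$, and repeated application of $\Sigma_{f,g}$ together with the relations $(f' \otimes 1) \circ (f \otimes 1) = (f'f) \otimes 1$ and $(1 \otimes g')\circ (1 \otimes g) = 1 \otimes (g'g)$ puts every such composite into the 2-isomorphism class of a normal form $(1 \otimes g) \circ (f \otimes 1)$. For injectivity, I would use the canonical strict 2-functor $\cA \otimes \cB \to \cA \times \cB$ (which sends each $\Sigma_{f,g}$ to an identity 2-cell) composed with the two cartesian projections to obtain strict 2-functors $\mathrm{pr}_\cA \colon \cA \otimes \cB \to \cA$ and $\mathrm{pr}_\cB \colon \cA \otimes \cB \to \cB$. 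These send a normal form $(1 \otimes g) \circ (f \otimes 1)$ to $f$ and $g$ respectively, so any 2-isomorphism of normal forms in $\cA \otimes \cB$ projects to 2-isomorphisms $f \cong f'$ and $g \cong g'$.

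Once $\phi_{\cA, \cB}$ is established as a natural isomorphism, the coherence axioms for a symmetric strong monoidal functor reduce to equalities between isomorphism classes in $(\cat, \times)$, which are immediate because the Gray tensor product and the cartesian product agree on objects and $(\cat, \times)$ is strict cartesian; unit coherence holds on the nose since the terminal 2-category $1$ satisfies $(1)_1 = 1$. Finally, the lax symmetric monoidal structure on $d$ arises by taking mates: the comparison $d(X) \otimes d(Y) \to d(X \times Y)$ is the adjunct of $\phi_{d(X), d(Y)}^{-1}$ under the adjunction $(-)_1 \dashv d$, and the required coherence axioms follow formally from those for $\phi$. As a sanity check, since $d(X)$ and $d(Y)$ are discrete 2-categories there are no nontrivial generators for $d(X) \otimes d(Y)$, so this lax comparison is in fact an isomorphism of 2-categories.
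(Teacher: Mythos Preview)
Your approach is essentially the same as the paper's: you verify the isomorphism $\cA_1 \times \cB_1 \cong (\cA \otimes \cB)_1$ directly (the paper leaves this ``simple calculation'' to the reader, and you are supplying exactly that calculation), and you deduce the lax structure on $d$ by doctrinal adjunction, which is precisely what the paper cites \cite{Kelly1974Doctrinal} for.

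However, your final ``sanity check'' is incorrect. The 2-categories $d(X)$ and $d(Y)$ are \emph{locally} discrete (no nontrivial 2-cells), but they do have nontrivial 1-cells whenever $X$ and $Y$ do. Hence $d(X) \otimes d(Y)$ has nontrivial Gray structure 2-cells $\Sigma_{f,g}$ for any nonidentity 1-cells $f$ in $X$ and $g$ in $Y$, while $d(X \times Y)$ has only identity 2-cells. The comparison 2-functor $d(X) \otimes d(Y) \to d(X \times Y)$ therefore collapses each $\Sigma_{f,g}$ to an identity and is \emph{not} an isomorphism in general; this is exactly the content of \cref{rmk:d_*monoidal}. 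This does not affect the proof of the lemma, which only asserts that $d$ is lax monoidal, but you should remove or correct that sentence.
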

\begin{proof}
  The second statement follows from the first by doctrinal adjunction
  \cite{Kelly1974Doctrinal}.  For the first, one begins by checking
  that
  \[
  \cD_1 \times \cE_1 \cong (\cD \otimes \cE)_{1};
  \]
  this is a simple calculation using the definition of $\otimes$ that
  we leave to the reader.  If we let $I$ denote the terminal
  2-category, the unit for $\otimes$, then $I_1$ is the terminal
  category, so $(-)_1$ preserves units up to (unique) isomorphism.  It
  is then easy to check that these isomorphisms interact with the
  associativity, unit, and symmetry isomorphisms to give a strong
  symmetric monoidal functor.
\end{proof}

\begin{rmk}\label{rmk:d_*monoidal}
  It is useful to point out that if $A,B$ are categories, then the
  comparison 2-functor
  \[
  \chi_{A,B} \cn d A \otimes d B \to d (A \times B)
  \]
  is the 2-functor which quotients all the 2-cells $\Sigma_{f,g}$ to
  be the identity.  In view of the adjunction in \cref{lem:pi0_lifts},
  the 2-functor $\chi_{A,B}$ can be identified with the component of
  the unit at $d A \otimes d B$.
\end{rmk}

Our first adjunction between Picard 1- and 2-categories is contained in the following result.

\begin{prop}\label{cor:_1adj_on_pgms_and_pics}
  The functors $\cD \mapsto \cD_1$ and $d$ induce adjunctions between
  \begin{itemize}
  \item the categories $\pgm$ and $\pcat$, and
  \item the category of strict Picard 2-categories and the category of
    strict Picard categories.
  \end{itemize}
  The counits of these adjunctions are both identities.
\end{prop}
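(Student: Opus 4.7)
The plan is to derive both adjunctions from the doctrinal adjunction established in \cref{lem:pi0_lifts} together with the symmetric monoidal data from \cref{prop:_1ssm}. Since $(-)_1$ is strong symmetric monoidal and its right adjoint $d$ is lax symmetric monoidal, the pair $(-)_1 \dashv d$ forms a symmetric monoidal adjunction. A general fact about such adjunctions is that they lift to adjunctions on categories of commutative monoids, and I will unpack this concretely in the present setting.

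First I would describe how the functors restrict. Given a permutative Gray-monoid $(\cD, \oplus, e, \beta)$, the category $\cD_1$ inherits a multiplication
\[
\cD_1 \times \cD_1 \cong (\cD \otimes \cD)_1 \xrightarrow{\oplus_1} \cD_1,
\]
a unit $1 \cong I_1 \xrightarrow{e_1} \cD_1$, and a symmetry $(\beta)_1$. Strict associativity and unitality follow from applying the (1-)functor $(-)_1$ to the corresponding strict equalities in $\cD$, using the monoidal coherence isomorphisms of $(-)_1$ constructed in \cref{prop:_1ssm}; the three axioms of \cref{defn:pc} for $(\beta)_1$ then fall out from the three axioms of \cref{defn:pgm} for $\beta$. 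Symmetric strict monoidal functors of $\pgm$ visibly induce symmetric strict monoidal functors of $\pcat$. In the other direction, given a permutative category $C$, the 2-category $dC$ becomes a Gray-monoid via $dC \otimes dC \xrightarrow{\chi_{C,C}} d(C \times C) \xrightarrow{d\oplus} dC$ with symmetry $d\beta$; here we use \cref{rmk:d_*monoidal} together with the observation that all 2-cells in $dC$ are identities, so in particular the Gray structure 2-cells collapse.

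Second, I would verify that the unit $\cD \to d(\cD_1)$ of the underlying 2-adjunction is a strict symmetric monoidal 2-functor when $\cD$ is a permutative Gray-monoid, and that the counit $d(C)_1 \to C$ coincides with the identity. The first is an instance of the standard doctrinal-adjunction yoga once the monoidal structures on $\cD_1$ and $dC$ have been fixed as above. For the second, the set-level counit $\pi_0 d = \id$ is the identity, and since 2-cells in $dC$ are all identities, isomorphism classes of 1-cells in $dC$ coincide with morphisms of $C$, so $d(C)_1 = C$ as permutative categories. For the Picard variant, I would then check that both functors restrict: if $\cD$ is a strict Picard 2-category, objects of $\cD_1$ are invertible because they are invertible in $\cD$, and any morphism of $\cD_1$ is represented by an equivalence in $\cD$, whose pseudo-inverse descends to a genuine inverse after passage to isomorphism classes; conversely if $C$ is a Picard category, $dC$ has invertible 2-cells trivially, invertible 1-cells from $C$, and invertible objects from $C$.

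The main obstacle is organizational rather than conceptual: tracking when the coherence data of the strong and lax monoidal structures produce strict equalities versus mere natural isomorphisms, so that one genuinely lands in $\pgm$ and $\pcat$ (strict symmetric monoidal functors, strict units) rather than their pseudo analogues. This is ultimately controlled by the fact that the comparison $\chi_{A,B}\colon dA \otimes dB \to d(A \times B)$ is the identity on underlying data after quotienting by the invertible Gray structure 2-cells $\Sigma_{f,g}$, and that $(-)_1$ identifies all such invertible 2-cells in sight.
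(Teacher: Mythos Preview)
Your concrete argument matches the paper's almost exactly: both construct the permutative structure on $\cD_1$ via the strong monoidal structure of $(-)_1$, equip $dC$ with a Gray-monoid structure via $\chi_{C,C}$ followed by $d\oplus$, observe that the counit is the identity from the set-level counit $\pi_0 d = \id$, check the unit is strict symmetric monoidal by inspection, and then restrict to Picard objects.

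One caution about your framing, however. You invoke the ``general fact'' that a symmetric monoidal adjunction lifts to categories of commutative monoids. The paper explicitly warns, in the remark immediately following its proof, that this is \emph{not} entirely formal here: while permutative categories are symmetric pseudomonoids in $(\cat_2,\times)$, permutative Gray-monoids do \emph{not} admit such a description in $(\iicat,\otimes)$ because the Gray tensor product interacts poorly with 2-natural transformations. So the lifting cannot be deduced from an off-the-shelf theorem about monoidal adjunctions and monoid objects; it must be checked by hand, which is what both you and the paper then do. Your final paragraph gestures at this difficulty (``organizational rather than conceptual''), but it is worth being explicit that the general principle you cite at the outset does not literally apply, and the concrete verification is the entire content of the proof rather than a routine unpacking.
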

\begin{proof}
  It is immediate from \cref{prop:_1ssm} and the definitions that
  applying $\cD \mapsto \cD_1$ to a permutative Gray-monoid gives a
  permutative category, and that the resulting permutative category is
  a strict Picard category if $\cD$ is a strict Picard 2-category;
  this constructs both left adjoints.  To construct the right
  adjoints, let $(C, \oplus, e)$ be a permutative category.  We must
  equip $dC$ with a permutative Gray-monoid structure.  The tensor
  product is given by
  \[
  d C \otimes d C \stackrel{\chi_{C,C}}{\longrightarrow} 
  d (C \times C) \stackrel{d \oplus}{\longrightarrow} 
  dC
  \]
  using \cref{prop:_1ssm} or the explicit description in
  \cref{rmk:d_*monoidal}.  The 2-natural isomorphism $\beta^{d C}$ is
  $d ( \beta^{C}) * \chi_{C,C}$, using the fact that $d(\tau^{\times})
  \circ \chi = \chi \circ \tau^{\otimes}$ by the second part of
  \cref{prop:_1ssm}.  The permutative Gray-monoid axioms for $dC$ then
  reduce to the permutative category axioms for $C$ and the lax
  symmetric monoidal functor axioms for $d$. Once again, $dC$ is a
  strict Picard 2-category if $C$ is a strict Picard category.  The
  statement about counits follows from the corresponding statement
  about the counit for the adjunction $\pi_0 \dashv d$, and the unit
  is a strict symmetric monoidal 2-functor by inspection.  The
  triangle identities then follow from those for $\pi_0 \dashv d$,
  concluding the construction of both adjunctions.
\end{proof}

\begin{rmk}
  The proof above is simple, but not entirely formal: while symmetric
  monoidal categories are the symmetric pseudomonoids in the symmetric
  monoidal 2-category $\cat$, permutative Gray-monoids do not admit
  such a description due to the poor interaction between the Gray
  tensor product and 2-natural transformations.
\end{rmk}

We now move on to our second adjunction between permutative categories
and permutative Gray-monoids which restricts to one between strict
Picard categories and strict Picard 2-categories.  This adjunction
models loop and suspension functors, and appears informally in work of
Baez and Dolan \cite{BD1995Higher} on stabilization phenomena in
higher categories.

\begin{lem}\label{lem:permcat_to_pgm}
  Let $(C, \oplus, e)$ be a permutative category with symmetry $\si$.
  Then the 2-category $\Si C$ with one object $*$, hom-category $\Si C
  (*,*) = C$, and horizontal composition given by $\oplus$ admits the
  structure of a permutative Gray-monoid $(\Si C, \wt{\oplus})$.
  The assignment $(C,\oplus) \mapsto (\Si C, \wt{\oplus})$ is the
  function on objects of a functor
  \[
  \Si \cn \pcat \to \pgm.
  \]
\end{lem}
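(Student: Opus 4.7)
The plan is to build the permutative Gray-monoid structure on $\Si C$ in layers, each time reducing the axioms to the corresponding structure in the permutative category $C$. First I would verify that $\Si C$ is a 2-category: strict associativity and unitality of horizontal composition follow from $C$ being strict monoidal, and the interchange law holds because $\oplus$ is a bifunctor on $C \times C$.

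To define the Gray-monoid product $\wt{\oplus} \cn \Si C \otimes \Si C \to \Si C$, I would invoke the universal property of the Gray tensor product: giving a 2-functor $\cA \otimes \cB \to \cC$ is equivalent to giving a pair of 2-functors $F \cn \cA \to \cC$, $G \cn \cB \to \cC$ which agree on objects, together with a family of invertible 2-cells $\wt{\Si}_{f,g} \cn F(f) \circ G(g) \Rightarrow G(g) \circ F(f)$ satisfying naturality and bilinearity (equivalently, a cubical 2-functor of two variables). Here I would take $F = G = \id_{\Si C}$ and set $\wt{\Si}_{f,g} = \si_{f,g}$, the symmetry of $C$ viewed as a 2-cell of $\Si C$. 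Naturality of $\wt{\Si}$ in $f$ and $g$ is naturality of $\si$, while the bilinearity axiom is exactly the hexagon axiom of \cref{defn:pc}.

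For $\wt{\oplus}$ to make $\Si C$ a Gray-monoid, strict associativity $\wt{\oplus} \circ (\wt{\oplus} \otimes 1) = \wt{\oplus} \circ (1 \otimes \wt{\oplus})$ and strict unitality reduce to the strict laws for $\oplus$ in $C$. For the symmetry 2-natural isomorphism $\wt{\beta} \cn \wt{\oplus} \Rightarrow \wt{\oplus} \circ \tau$, the 2-category $\Si C \otimes \Si C$ has a single object, so $\wt{\beta}$ has a single component, which I take to be the identity 1-cell $e$. Strict 1-cell naturality holds trivially because $\wt{\oplus}$ treats the two tensor factors symmetrically, giving $\wt{\oplus}(\tau(h)) = \wt{\oplus}(h)$ for every 1-cell $h$ of $\Si C \otimes \Si C$. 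The 2-cell naturality condition applied to a Gray structure 2-cell $\Si_{f,g}$ reduces, after observing that $\tau(\Si_{f,g}) = \Si_{g,f}^{-1}$, to the equation $\si_{f,g} = \si_{g,f}^{-1}$, which is the first axiom of \cref{defn:pc}. The three permutative Gray-monoid axioms for $\wt{\beta}$ then correspond term-by-term to the three axioms of $\si$ in a permutative category.

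Functoriality is then straightforward: a strict symmetric monoidal functor $F \cn C \to C'$ induces a strict 2-functor $\Si F \cn \Si C \to \Si C'$ defined by applying $F$ on hom-categories, and this preserves all Gray-monoid and symmetry data because $F$ does in $C$. The main obstacle is bookkeeping around the Gray tensor product --- in particular, verifying how $\tau$ interacts with the Gray structure 2-cells and maintaining a consistent convention between horizontal composition in $\Si C$ and the monoidal product of $C$ --- but once these are fixed, each verification is a direct translation of a permutative category axiom of $C$.
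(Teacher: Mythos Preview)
Your approach is essentially the same as the paper's: define $\wt{\oplus}$ by sending the Gray structure 2-cells to the symmetry $\si$, take the unique component of $\wt{\beta}$ to be the identity, and observe that 2-naturality of $\wt{\beta}$ on the cells $\Si_{f,g}$ reduces to $\si_{f,g} = \si_{g,f}^{-1}$. One small inaccuracy: the three permutative Gray-monoid axioms for $\wt{\beta}$ do \emph{not} correspond term-by-term to the three permutative-category axioms for $\si$---since $\wt{\beta}$ has identity components, all three PGM axioms are trivially satisfied; the hexagon axiom is used instead for the bilinearity of $\wt{\oplus}$ (as you note), and the symmetry axiom is used for the 2-naturality of $\wt{\beta}$ (as you also note).
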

\begin{proof}
  Since $C$ is a strict monoidal category, $\Si C$ is a strict
  2-category when horizontal composition is given by $\oplus$.  We can
  define a 2-functor $\wt{\oplus}\cn \Si C \otimes \Si C \to \Si C$ as
  the unique function on 0-cells, by sending any cell of the form $a
  \otimes 1$ to $a$, any cell of the form $1 \otimes b$ to $b$, and
  $\Si_{a,b}$ to the symmetry $\si_{a,b}\cn a \oplus b \cong b \oplus
  a$.  With the unique object as the unit, it is simple to check that
  this 2-functor makes $\Si C$ into a Gray-monoid.  All that remains
  is to define $\beta$ and check the three axioms.  Since there is
  only one object and it is the unit, the second axiom shows that the
  unique component of $\beta$ must be the identity 1-cell.  Then
  naturality on 1-cells is immediate, and the only two-dimensional
  naturality that is not obvious is for the cells $\Si_{a,b}$.  This
  axiom becomes the equation
  \[
  \be \oplus \Si_{a,b} = \Si^{-1}_{b,a} \oplus \be
  \]
  which is merely the claim that $\si_{a,b}$ is a symmetry rather than
  a braid.  It is then obvious that this assignment defines a functor
  as stated.
\end{proof}

\begin{example}
  The permutative Gray-monoid constructed in \cite[Example
  2.30]{Sch2011Classification} is a suspension $\Si C$ for the
  following permutative category $C$.
  \begin{itemize}
  \item The objects of $C$ are the elements of $\bZ/2$ with the 
    monoidal structure given by addition.
  \item Each endomorphism monoid of $C$ is $\bZ/2$ and there are no
    morphisms between distinct objects.
  \item The symmetry of the non-unit object with itself is the
    nontrivial morphism. 
  \end{itemize}
\end{example}
\begin{rmk}
  It is natural to expect that the permutative Gray-monoid $\Si C$ in
  the previous example models the 0-connected cover of the 2-type of
  the sphere spectrum, and indeed this will follow from
  \cref{thm:sigmak_equals_ksigma}.  One might also hope that a
  skeletal model for the sphere spectrum can be constructed as a
  ``many-object'' version of $\Si C$ together with an appropriate
  symmetry.  However \cref{thm:nostrskelmod} will prove that this is
  not possible.
\end{rmk}

\begin{lem}\label{lem:pgm_to_permcat}
  Let $(\cD,\oplus,e)$ be a permutative Gray-monoid.  Then the
  category $\cD(e,e)$ is a permutative category, with tensor product
  given by composition.  The assignment $\cD \mapsto \cD(e,e)$ is the
  function on objects of a functor
  \[
  \Om \cn \pgm \to \pcat.
  \]
\end{lem}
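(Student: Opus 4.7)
The plan is to construct a permutative category structure on $\cD(e,e)$ in two stages, and then extend the assignment to morphisms. For the underlying monoidal structure, the hom-category of any 2-category at a single object is automatically a strict monoidal category under horizontal composition of 1-cells, with unit $\id_e$; this uses only that $\cD$ is a 2-category. The substantive content is the symmetry, which I would construct via an Eckmann-Hilton-style argument using the Gray structure 2-cells of $\cD \otimes \cD$.

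Specifically, for 1-cells $f, g \cn e \to e$, the strict unit axioms of the Gray-monoid give $\oplus(f \otimes 1) = f$ and $\oplus(1 \otimes g) = g$, so applying the 2-functor $\oplus$ to the Gray structure 2-cell $\Sigma_{f,g} \cn (f \otimes 1)(1 \otimes g) \cong (1 \otimes g)(f \otimes 1)$ in $\cD \otimes \cD$ produces a 2-cell $f \circ g \cong g \circ f$ in $\cD(e,e)$. I would take this to be $\beta^{\cD(e,e)}_{f,g}$; naturality in both variables follows from the 2-functoriality of $\oplus$ together with the naturality of $\Sigma$. Of the three permutative axioms, the unit axiom follows from the normalization $\Sigma_{\id, g} = \id = \Sigma_{f, \id}$; the hexagon follows from the bilinearity of $\Sigma$ in each slot; and the involution axiom $\beta_{g,f} \beta_{f,g} = \id$ reduces to a compatibility between $\Sigma_{f,g}$ and $\Sigma_{g,f}$ inside $\cD \otimes \cD$. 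Notably, none of these steps uses the symmetry $\beta^{\cD}$: the permutative structure on $\cD(e,e)$ is already visible in any Gray-monoid.

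For functoriality, let $F \cn \cD \to \cD'$ be a morphism in $\pgm$. Since $F$ is a 2-functor with $F(e) = e'$, it restricts to a functor $\cD(e,e) \to \cD'(e',e')$ which strictly preserves composition and identities, hence is strict monoidal. The strict preservation of $\oplus$ gives a commuting square $\oplus' \circ (F \otimes F) = F \circ \oplus$, and $F \otimes F$ is a 2-functor on Gray tensor products, so it sends $\Sigma_{f,g}$ to $\Sigma_{Ff, Fg}$. Hence $F_{e,e}$ preserves $\beta^{\cD(e,e)}$ and is a morphism in $\pcat$. The main obstacle is the involution axiom: one must trace through the relationship between $\Sigma_{f,g}$ and $\Sigma_{g,f}$ in $\cD \otimes \cD$ and apply $\oplus$, which amounts to the Gray-tensor-product version of the standard Eckmann-Hilton commutativity calculation. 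Everything else should be a direct unwinding of the Gray-monoid axioms.
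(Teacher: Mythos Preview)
Your construction of the strict monoidal structure and of the candidate symmetry $\beta^{\cD(e,e)}_{f,g} = \oplus(\Sigma_{f,g})$ is correct and matches the paper. The gap is in the involution axiom, and specifically in your claim that ``none of these steps uses the symmetry $\beta^{\cD}$: the permutative structure on $\cD(e,e)$ is already visible in any Gray-monoid.'' This is false: for a bare Gray-monoid, the hom-category $\cD(e,e)$ is only \emph{braided}, not symmetric. The 2-cells $\Sigma_{f,g}\cn f\circ g \cong g\circ f$ and $\Sigma_{g,f}^{-1}\cn f\circ g \cong g\circ f$ arise from Gray structure cells in the two different tensor slots, and there is no axiom of the Gray tensor product relating them. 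An ``Eckmann--Hilton-style argument'' in this setting produces exactly the braiding and no more; this is the standard periodic-table phenomenon (cf.\ the references to \cite{GPS95Coherence,CG2011periodicII} in the paper).

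The paper obtains the missing equality $\Sigma_{f,g}=\Sigma_{g,f}^{-1}$ precisely from the permutative structure on $\cD$. Since $\beta$ is a 2-natural isomorphism $\oplus \Rightarrow \oplus\circ\tau$, its 2-naturality on the Gray structure cell $\Sigma_{f,g}$ in $\cD\otimes\cD$ yields an equation of the form $\beta_{e,e}\circ\Sigma_{f,g} = \Sigma_{g,f}^{-1}\circ\beta_{e,e}$ (this is the same computation as in the proof of \cref{lem:permcat_to_pgm}, read in reverse). The second permutative Gray-monoid axiom forces $\beta_{e,e}=\id_e$, so the equation collapses to $\Sigma_{f,g}=\Sigma_{g,f}^{-1}$, giving the symmetry. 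Your argument needs this step; without it, you have constructed a braided (not permutative) category, and the functor you describe would land in braided strict monoidal categories rather than in $\pcat$.
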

\begin{proof}
  For a Gray-monoid $\cD$, the hom-category $\cD(e,e)$ is a braided,
  strict monoidal category \cite{GPS95Coherence,CG2011periodicII} in
  which the tensor product is given by composition and the braid $f
  \circ g \cong g \circ f$ is the morphism $\Si_{f,g}$ in $\cD(e,e)$;
  we note that $fe= f$ and $eg = g$ since all the 1-cells involved are
  endomorphisms of the unit object, and the unit object in a
  Gray-monoid is a strict two-sided unit.  The component $\beta_{e,e}$
  is necessarily the identity, and the calculations in the proof of
  \cref{lem:permcat_to_pgm} show that $\Si_{f,g} = \Si_{g,f}^{-1}$, so
  we have a permutative structure on $\cD(e,e)$.
\end{proof}

\begin{prop}\label{prop:pgm_permcat_adj}
  The functor $\Si \cn \pcat \to \pgm$ is left adjoint to the functor
  $\Om \cn \pgm \to \pcat.$
\end{prop}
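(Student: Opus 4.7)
The plan is to establish the adjunction by constructing explicit unit and counit natural transformations, both built on the identity observation that $\Om \Si C = \Si C(*,*) = C$ as permutative categories. First, I would check this identity: by construction of $\Si C$ in \cref{lem:permcat_to_pgm}, horizontal composition in $\Si C$ is the monoidal product of $C$, and the braiding of $\Om \Si C$ given by $\Si_{a,b}^{\Si C}$ is, by the description of $\wt{\oplus}$, precisely the symmetry $\sigma_{a,b}^{C}$. Hence $\Om \Si = \id_{\pcat}$ on the nose, and we may simply take the unit $\eta \cn \id \Rightarrow \Om \Si$ to be the identity natural transformation.

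Next I would construct the counit $\epsilon_{\cD} \cn \Si \Om \cD \to \cD$ for a permutative Gray-monoid $\cD$. Define $\epsilon_{\cD}$ to send the unique object $*$ to $e$ and to act as the inclusion $\cD(e,e) \hookrightarrow \cD$ on hom-categories. This is a 2-functor because composition of 1-cells and 2-cells in $\Si \Om \cD$ is, by definition, the monoidal product of $\Om \cD$, which in turn is composition of endomorphisms of $e$ in $\cD$. To upgrade it to a strict symmetric monoidal 2-functor, I would verify the square $\epsilon_{\cD} \circ \wt{\oplus} = \oplus \circ (\epsilon_{\cD} \otimes \epsilon_{\cD})$ on generators of the Gray tensor product: on cells of the form $f \otimes 1$ and $1 \otimes g$ both sides give $f$ and $g$ respectively (using the strict unit property of $e$ in $\cD$), and on a Gray structure 2-cell $\Si_{f,g}$ the left side gives $\sigma^{\Om \cD}_{f,g} = \Si^{\cD}_{f,g}$ (by definition of the permutative structure on $\Om \cD$ in \cref{lem:pgm_to_permcat}) while the right side gives $\Si^{\cD}_{f,g}$ directly. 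Preservation of $\beta$ is automatic, as $\beta^{\Si \Om \cD}$ has a single component which must be the identity, and $\beta^{\cD}_{e,e}$ is also the identity by the unit axiom in \cref{defn:pgm}.

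With $\eta$ and $\epsilon$ in hand, I would verify the two triangle identities. One triangle, $\Om \epsilon \circ \eta \Om = \id$, is immediate since $\eta$ is the identity and $\Om \epsilon_{\cD}$ is the identity on $\cD(e,e)$. The other triangle, $\epsilon \Si \circ \Si \eta = \id_{\Si C}$, is similarly immediate: $\Si \eta_C$ is the identity 2-functor of $\Si C$, and $\epsilon_{\Si C}$ is the identity on $\Si C$ since $\Om \Si C = C$ under this identification. Naturality of $\epsilon$ in $\cD$ is a direct check that any strict symmetric monoidal 2-functor $F \cn \cD \to \cD'$ restricts along $e \mapsto e$ to a functor commuting with the inclusions $\cD(e,e) \hookrightarrow \cD$ and $\cD'(e,e) \hookrightarrow \cD'$.

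The main technical point — and the only nontrivial step — is confirming that $\epsilon_{\cD}$ is genuinely a strict symmetric monoidal 2-functor rather than merely an inclusion, and this amounts to tracing through the two definitions of the symmetry: the braiding on $\Om \cD$ given by composition with $\Si^{\cD}$-cells, and the image under $\wt{\oplus}$ of the Gray structure 2-cells in $\Si \Om \cD \otimes \Si \Om \cD$. Both are forced to be $\Si^{\cD}_{f,g}$ by construction, so the compatibility holds on the nose. No coherence data beyond strict equalities is required, and the adjunction $\Si \dashv \Om$ follows.
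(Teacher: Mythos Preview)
Your proposal is correct and follows essentially the same route as the paper: identify $\Om\Si$ with the identity to obtain the unit, define the counit as the inclusion $\Si\big(\cD(e,e)\big)\hookrightarrow\cD$ sending $*\mapsto e$, verify it is a strict map of permutative Gray-monoids, and check the triangle identities. You spell out the verification that $\epsilon_{\cD}$ preserves the Gray structure $2$-cells in slightly more detail than the paper, which simply defers to the computations in \cref{lem:permcat_to_pgm} and \cref{lem:pgm_to_permcat}, but the content is the same.
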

\begin{proof}
  It is easy to check that the composite $\Om \Si$ is the identity
  functor on $\pcat$, and we take this equality to be the unit of the
  adjunction.  The counit would be a functor $\Si \big( \cD(e,e) \big)
  \to \cD$ which we must define to send the single object of $\Si \big(
  \cD(e,e) \big)$ to the unit object $e$ of $\cD$ and then to be the
  obvious inclusion on the single hom-category.  This is clearly a
  2-functor, and the arguments in the proofs of the previous two
  lemmas show that this is a strict map of permutative Gray-monoids.

  The counit is then obviously the identity on the only hom-category
  when $\cD$ has a single object, and this statement is in fact the
  commutativity of one of the triangle identities for the adjunction.
  It is simple to check that $\Om$ applied to the counit is the
  identity as well since the counit is the identity functor when
  restricted to the hom-category of the unit objects, and this is the
  other triangle identity, completing the verification of the
  adjunction.
\end{proof}

Since the unit $1 \Rightarrow \Om \Si$ is the identity, and the counit
is an isomorphism on permutative Gray-monoids with one object, we have
the following corollary.

\begin{cor}\label{cor:equivs}
  The adjunction $\Si \dashv \Om$ in \cref{prop:pgm_permcat_adj}
  restricts to the categories of strict Picard categories and strict
  Picard 2-categories. Moreover, this adjunction gives equivalences
  between
  \begin{itemize}
  \item the category of permutative categories and the category of
    one-object permutative Gray-monoids, and
  \item the category of strict Picard categories and the category of
    one-object strict Picard 2-categories.
\end{itemize}
\end{cor}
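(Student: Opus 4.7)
The corollary has two assertions: that $\Sigma \dashv \Omega$ restricts to the Picard setting, and that after restricting the target to one-object objects on each side, the adjunction becomes an equivalence of categories. Both assertions are essentially unpackings of what was already proved for \cref{prop:pgm_permcat_adj}, so the plan is to verify the invertibility conditions and then apply a standard adjunction argument.

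For the restriction, I would first check that $\Sigma$ preserves the Picard property. If $C$ is a strict Picard category, every morphism of $C$ is invertible, and these morphisms are precisely the 2-cells of $\Sigma C$. The 1-cells of $\Sigma C$ are the objects of $C$, and composition in $\Sigma C$ is $\oplus$; since every object of $C$ is $\oplus$-invertible up to isomorphism, every 1-cell of $\Sigma C$ is an internal equivalence. The unique object of $\Sigma C$ is automatically $\widetilde\oplus$-invertible because it is the unit. Conversely, if $\cD$ is a strict Picard 2-category, then $\Omega \cD = \cD(e,e)$ inherits all its 2-cells from $\cD$, so they are invertible; and for any 1-cell $f\cn e\to e$, an internal pseudoinverse $g$ in $\cD$ gives invertible 2-cells $g\circ f\cong \id_e$ and $f\circ g\cong \id_e$ which are exactly the data showing that $f$ is invertible as an object of $\Omega\cD$ under the tensor product given by composition.

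For the equivalences, I would invoke the fact, proved in \cref{prop:pgm_permcat_adj}, that the unit $1\Rightarrow \Omega\Sigma$ is the identity. This already implies that $\Sigma$ is fully faithful, so it restricts to an equivalence onto its essential image. Since $\Sigma C$ has a unique object by construction, that essential image is contained in the full subcategory of one-object permutative Gray-monoids. To see this containment is actually an equivalence, it remains to verify that the counit $\varepsilon_{\cD}\cn \Sigma(\cD(e,e))\to \cD$ is an isomorphism whenever $\cD$ has a single object. This is immediate from the explicit description of $\varepsilon_\cD$ in the proof of \cref{prop:pgm_permcat_adj}: on objects it is the unique map to $e$, which is a bijection when $\cD$ has a single object, and on the unique hom-category it is the identity. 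The same argument then restricts to the Picard subcategories on both sides by the preceding paragraph.

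No step looks like a real obstacle; the only mild subtlety is remembering that invertibility of a 1-cell $f$ in $\cD$ translates into invertibility of $f$ as an \emph{object} in the monoidal structure of $\Omega\cD$, because the composition in $\cD$ is precisely the tensor product of $\Omega\cD$ (so pseudoinverses at the 2-categorical level become honest $\otimes$-inverses up to isomorphism at the 1-categorical level).
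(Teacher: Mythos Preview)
Your proposal is correct and follows essentially the same approach as the paper: the paper's proof simply observes that both $\Sigma$ and $\Omega$ send strict Picard objects to strict Picard objects, and that the remaining claims follow from the proof of \cref{prop:pgm_permcat_adj} (specifically, from the unit being the identity and the counit being an isomorphism on one-object permutative Gray-monoids). You have filled in exactly these details.
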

\begin{proof}
  The first statement follows from the definitions, since both $\Si$
  and $\Om$ send strict Picard objects in one category to strict
  Picard objects in the other.  The other two statements are obvious
  from the proof above.
\end{proof}

\section{Stable homotopy theory of Picard 2-categories}
\label{sec:hty-thy-Pic-cats}
In this section we describe how to use the algebra of Picard
2-categories to express homotopical features of their corresponding
connective spectra categorically.  We begin with a brief review of
stable Postnikov towers, mainly for the purpose of fixing notation.
Subsequently, we identify algebraic models for this homotopical data
in terms of the categorical structure present in a Picard 2-category.

For an abelian group $\pi$, the Eilenberg-Mac
Lane spectrum of $\pi$ is denoted $H\pi$.  Its $n$th suspension is
denoted $\Si^{n}H\pi$, and has zeroth space given by the Eilenberg-Mac
Lane space $K(\pi,n)$.  With this notation, the stable Postnikov tower
of a connective spectrum $X$ is given as follows.
\[\begin{xy}
  0;<20mm,0mm>:<0mm,12mm>:: 
  (0,0)*+{X_0}="x0";
  (1,0)*+{\Si^{2}H(\pi_1X)}="h12";
  (-1,1)*+{\Si^{1}H(\pi_1X)}="h11";
  (0,1)*+{X_1}="x1";
  (1,1)*+{\Si^{3}H(\pi_2X)}="h23";
  (-1,2)*+{\Si^{2}H(\pi_2X)}="h22";
  (0,2)*+{X_2}="x2";
  (1,2)*+{\Si^{4}H(\pi_3X)}="h34";
  (0,3)*+{\vdots}="xn";
  {\ar^-{k_0} "x0"; "h12"};
  {\ar^-{k_1} "x1"; "h23"};
  {\ar^-{k_2} "x2"; "h34"};
  {\ar^-{i_1} "h11"; "x1"};
  {\ar^-{i_2} "h22"; "x2"};
  {\ar "xn"; "x2"};
  {\ar "x2"; "x1"};
  {\ar "x1"; "x0"};
\end{xy}\]

Since $X$ is connective, it follows that $X_0=H(\pi_0 X)$ and $k_0$ is therefore 
a stable map from $H(\pi_0 X)$ to $\Si^2 H(\pi_1 X)$. When $X$ is the $K$-theory
spectrum of a strict Picard $2$-category, we will model $k_0$ and
$k_1i_1$ algebraically via stable quadratic maps. A \emph{stable
  quadratic map} is a homomorphism from an abelian group $A$ to the
$2$-torsion of an abelian group $B$. The abelian group of stable
homotopy classes $[HA,\Si^2 HB]$ is naturally isomorphic to the
abelian group of stable quadratic maps $A\to B$ by
\cite[Equation (27.1)]{EM54groupsII}. Moreover \cite[Theorem 20.1]{EM54groupsIII}
implies that under this identification $k_0\colon H(\pi_0 X)\to \Si^2
H(\pi_1 X)$ corresponds to the stable quadratic map $\pi_0 X\to \pi_1
X$ given by precomposition with the Hopf map $\eta \colon\Si
\mathbb{S}\to \mathbb{S}$ where $\mathbb{S}$ denotes the sphere
spectrum.

The stable Postnikov tower can be constructed naturally in $X$, so
that if
\[
X' \to X
\]
is a map of spectra, we have the following commuting naturality
diagram of stable Postnikov layers.
\begin{equation}\label{eqn:postnikov-nat}
\begin{xy}
  0;<27mm,0mm>:<0mm,15mm>:: 
  (0,-.5)*+{\Si^{n}H(\pi_nX)}="k11";
  (1,-.5)*+{X_n}="x1";
  (2,-.5)*+{\Si^{n+2}H(\pi_{n+1}X)}="k23";
  (0,.5)*+{\Si^{n}H(\pi_nX')}="k11m";
  (1,.5)*+{X_n'}="x1m";
  (2,.5)*+{\Si^{n+2}H(\pi_{n+1}X')}="k23m";
  {\ar^-{i_n} "k11"; "x1"};
  {\ar^-{k_n} "x1"; "k23"};
  {\ar^-{i_n'} "k11m"; "x1m"};
  {\ar^-{k_n'} "x1m"; "k23m"};
  {\ar_-{} "k11m"; "k11"};
  {\ar "x1m"; "x1"};
  {\ar^-{} "k23m"; "k23"};
\end{xy}
\end{equation}

Picard 2-categories model stable $2$-types via $K$-theory.  The
$K$-theory functors for symmetric monoidal $n$-categories, constructed
in \cite{Seg74Categories,Tho95Symmetric,Man10Inverse} for $n = 1$ and
\cite{GJO2015KTheory} for $n = 2$, give faithful embeddings of Picard
$n$-categories into stable homotopy.  For the purposes of this section
we can take $K$-theory largely as a black box; in
\cref{sec:suspension-models-suspension} we give necessary definitions
and properties.

\subsection{Modeling stable Postnikov data}\label{sec:postnikov-data}
For a Picard category $(C,\oplus,e)$, the two possibly nontrivial
stable homotopy groups of its $K$-theory spectrum $K(C)$ are given by
\[
  \begin{array}{rcl}
    \pi_{0} K(C) & \cong &  \textrm{ob} C/\{x \sim y \textrm{ if there exists a 1-cell $f \cn x \to y$}\}\\
    \pi_{1} K(C) & \cong &  C(e,e).
  \end{array}
\]
The stable homotopy groups of the $K$-theory spectrum of a strict
Picard 2-category can be calculated similarly. We denote the
classifying space of a 2-category $\cD$ by $B\cD$ \cite{CCG10Nerves}.

\begin{lem}\label{lem:st_htpy_grps}
  Let $\cD$ be a strict Picard 2-category. The classifying space
  $B\cD$ is equivalent to $\Om^\infty K(\cD)$. The stable homotopy groups
  $\pi_{i} K(\cD)$ are zero except when $0 \leq i \leq 2$, in which
  case they are given by the formulas below.
  \[
  \begin{array}{rcl}
    \pi_{0} K(\cD) & \cong &  \textrm{ob}\cD/\{x \sim y \textrm{ if there exists a 1-cell $f \cn x \to y$}\}\\
    \pi_{1} K(\cD) & \cong &  \textrm{ob}\cD(e,e)/\{f \sim g \textrm{ if there exists a 2-cell $\al \cn f \Rightarrow g$}\}\\
    \pi_{2} K(\cD) & \cong & \cD(e,e)(\id_{e}, \id_{e})
  \end{array}
  \]
\end{lem}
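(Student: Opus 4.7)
The plan is to reduce the statement to a calculation of the homotopy groups of the classifying space $B\cD$, where both the equivalence $B\cD\simeq \Omega^{\infty}K\cD$ and the identification of those homotopy groups are essentially already known; the work is to assemble the pieces and match up the algebraic descriptions.

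First I would invoke the construction of the $K$-theory spectrum for symmetric monoidal 2-categories from \cite{GO12Infinite,GJO2015KTheory}: this construction produces a connective $\Omega$-spectrum whose zeroth space is weakly equivalent to $B\cD$, so that $B\cD \simeq \Omega^{\infty}K\cD$. This is a property of the $K$-theory functor rather than something that needs to be re-established for Picard objects. Granting this, the stable homotopy groups $\pi_i K\cD$ are identified for $i \geq 0$ with $\pi_i B\cD$, and the formulas to prove become statements about the classifying space of the underlying 2-category.

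Next I would compute $\pi_i B\cD$ directly from the nerve. Since $\cD$ is a strict Picard 2-category all 1- and 2-cells are invertible, so the underlying 2-category is a 2-groupoid; its nerve is therefore homotopically a 2-type, which immediately gives $\pi_i K\cD = 0$ for $i \geq 3$. For the low degrees I would use the standard description of the nerve of a 2-category (as in \cite{CCG10Nerves}): path components of $B\cD$ are exactly connected components of the underlying 1-skeleton of $\cD$, which since all 1-cells are invertible is the quotient of $\mathrm{ob}\,\cD$ by the relation ``there exists a 1-cell $x\to y$'', giving the formula for $\pi_0$. For $\pi_1$, a loop in $B\cD$ at the basepoint $e$ is represented by an endo-1-cell $f\cn e\to e$, and a homotopy between two such loops is provided by a 2-cell between them; the invertibility of 2-cells in $\cD$ makes this relation an equivalence relation, yielding the stated quotient. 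For $\pi_2$, after passing to the loop space at $e$, a 2-sphere is represented by a 2-cell $\mathrm{id}_e \Rightarrow \mathrm{id}_e$, and in a 2-groupoid there are no higher cells available to impose further identifications.

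The one place that requires care, rather than the main obstacle, is ensuring that the group-completion implicit in the $K$-theory construction does not alter $\pi_0$ or $\pi_1$ of $B\cD$: because $\cD$ is Picard, the commutative monoid $\pi_0 B\cD$ is already a group, so $B\cD$ is grouplike and group completion is a homotopy equivalence. I expect the verification that the homotopy-theoretic description of $\pi_1 B\cD$ matches the algebraic quotient by 2-cells to be the most bookkeeping-heavy step, since one must check that horizontal composition in $\cD$ agrees with concatenation of loops in $B\cD$ up to the relation imposed by 2-cells; this is standard for nerves of bicategories and follows from the explicit simplicial description in \cite{CCG10Nerves}.
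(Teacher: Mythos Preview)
Your approach is essentially the same as the paper's: identify $B\cD$ with $\Omega^{\infty}K(\cD)$ using that $\cD$ is grouplike, then read off the unstable homotopy groups of the nerve of a bigroupoid (the paper simply cites \cite[Remark~4.4]{CCG10Nerves} for the latter, where you spell it out by hand).

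One correction to your framing: the claim in your first paragraph that ``$B\cD \simeq \Omega^{\infty}K\cD$ is a property of the $K$-theory functor rather than something that needs to be re-established for Picard objects'' is not right as stated. For a general permutative Gray-monoid the zeroth space of $K(\cD)$ is a \emph{group completion} of $B\cD$, not $B\cD$ itself. You acknowledge this in your final paragraph, where you correctly argue that invertibility of objects makes $\pi_0 B\cD$ a group and hence group completion is a weak equivalence; but that is precisely the step that uses the Picard hypothesis, and it is the logical content of the first assertion of the lemma. The paper puts this in the right order: bigroupoid structure gives the formulas as unstable homotopy groups of $B\cD$; invertibility of objects gives grouplikeness; grouplikeness gives $B\cD \simeq \Omega^{\infty}K(\cD)$. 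Rearranging your write-up so that the group-completion argument is part of establishing $B\cD \simeq \Omega^{\infty}K\cD$, rather than a caveat afterwards, would make the logic clean.
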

\begin{proof}
  First, note that $\cD$ has underlying 2-category a bigroupoid, and
  the above are the unstable homotopy groups of the pointed space
  $(B\cD, e)$ by \cite[Remark 4.4]{CCG10Nerves}.
  Since the objects of $\cD$ are invertible, the space $B\cD$ is group-complete, and  hence it is the zeroth space of the $\Om$-spectrum $K(\cD)$. Thus the stable homotopy groups of $K(\cD)$ agree with
  the unstable ones for $B\cD$.
\end{proof}

\begin{prop}\label{prop:st-equiv-cat-equiv}
  A map of strict Picard 2-categories induces a stable equivalence of
  $K$-theory spectra if and only if it is an equivalence of Picard
  2-categories.
\end{prop}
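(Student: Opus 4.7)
The plan is to use \cref{lem:st_htpy_grps} to translate the hypothesis ``$K(F)$ is a stable equivalence'' into the three conditions that $F$ induces isomorphisms on $\pi_i K$ for $i = 0, 1, 2$, and then leverage the invertibility intrinsic to the Picard structure to bootstrap these conditions at the unit object $e$ into the global statement that $F$ is essentially surjective on objects and a local equivalence on every hom-category.

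The forward direction is routine. A biequivalence of Picard 2-categories is essentially surjective on objects and induces an equivalence on every hom-category. Reading off the formulas in \cref{lem:st_htpy_grps}: essential surjectivity on objects gives surjectivity on $\pi_0$; essential surjectivity of the functor $\cD(e, e) \to \cD'(e, e)$ gives surjectivity on $\pi_1$; and bijectivity on hom-sets at $(\id_e, \id_e)$ gives a bijection on $\pi_2$. Injectivity on $\pi_0$ and $\pi_1$ follows by lifting invertible 1- and 2-cells via local essential surjectivity.

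For the converse, suppose $F$ induces isomorphisms on $\pi_0, \pi_1, \pi_2$. Essential surjectivity on objects follows immediately from surjectivity on $\pi_0$. To verify the local equivalence condition $\cD(x, y) \to \cD'(Fx, Fy)$, I first treat the case $x = y = e$: essential surjectivity on 1-cells reduces to surjectivity on $\pi_1$; for full faithfulness on 2-cells, given $f, f' \cn e \to e$ in $\cD$, I use $\pi_1$-injectivity applied to the 1-endomorphism $(f')^{-1} \circ f$ to show that $F(f) \cong F(f')$ forces $f \cong f'$, then fix a 2-isomorphism $\gamma \cn f \cong f'$ to identify 2-cells $f \Rightarrow f'$ with 2-automorphisms of $f$, and finally whisker by an inverse $f^{-1}$ to identify those with 2-automorphisms of $\id_e$, where $\pi_2$-bijectivity applies. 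To extend to a general hom-category $\cD(x, y)$, I use the Picard structure: tensoring with a chosen inverse $x^{-1}$ together with an equivalence $x \oplus x^{-1} \simeq e$ induces a 2-equivalence $\cD(x, y) \simeq \cD(e, y \oplus x^{-1})$, and if $x \simeq y$ one further obtains an equivalence with $\cD(e, e)$ (if $x$ and $y$ lie in distinct $\pi_0$-classes then both hom-categories are empty and nothing is required). Since $F$ is strictly symmetric monoidal and the choices of inverses on each side can be made compatibly, the local equivalence at $(e, e)$ transports to a local equivalence at $(x, y)$.

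The main obstacle is the bookkeeping in the translation argument: the chosen inverses $x^{-1}$ and $F(x)^{-1}$, together with their unit equivalences, are only determined up to higher coherence, so one must verify that shuttling hom-categories through these equivalences is compatible with $F$ up to natural isomorphism. Once this is in place, the remainder is a routine diagram chase from the three isomorphisms on $\pi_0, \pi_1, \pi_2$.
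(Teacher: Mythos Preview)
Your proposal is correct and follows essentially the same strategy as the paper: use \cref{lem:st_htpy_grps} to identify the stable homotopy groups, then exploit translation by invertible objects and 1-cells to reduce the local-equivalence condition at an arbitrary pair $(x,y)$ to the case $(e,e)$. The paper's proof is much terser---it records only the translation equivalences $\cD(e,e)\simeq\cD(x,x)$ and $\cD(e,e)(\id_e,\id_e)\cong\cD(e,e)(f,f)$ and then invokes the characterization of biequivalence as biessential surjectivity plus local equivalence, citing \cite{Gur2012Biequivalences,Sch2011Classification}---whereas you spell out the bookkeeping (empty hom-categories when $x\not\simeq y$, compatibility of chosen inverses with $F$) explicitly.
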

\begin{proof}
  Note that the existence of inverses in a Picard 2-category implies
  that for any object $x$ we have an equivalence of categories
  $\cD(e,e)\simeq \cD(x,x)$ induced by translation by $x$. Similarly,
  for any 1-morphism $f\colon e \to e$ there is an isomorphism of sets
  $\cD(e,e)(\id_e,\id_e) \cong \cD(e,e)(f,f)$ induced by translation
  by $f$.

  A map $F\colon \cD \to \cD'$ of strict Picard 2-categories is a
  categorical equivalence if and only if it is an equivalence of
  underlying 2-categories, that is, if it is biessentially surjective
  and a local equivalence (see \cite[Section 5]{Gur2012Biequivalences} and
  \cite[Theorem 2.25]{Sch2011Classification}). By
  \cref{lem:st_htpy_grps} and the observation above, this happens
  exactly when $f$ induces an isomorphism on the stable homotopy
  groups of the corresponding $K$-theory spectra.
\end{proof}

We will use the adjunctions from \cref{sec:two-adjunctions} to reduce
the calculation of the stable quadratic maps corresponding to $k_0$
and $k_1i_1$ of $K(\cD)$ to two instances of the calculation of $k_0$
in the $1$-dimensional case.

\begin{lem}[\cite{JO12Modeling}]\label{lem:k0-given-by-symm-1}
  Let $C$ be a strict Picard category with unit $e$ and symmetry
  $\beta$.  Then the bottom stable Postnikov invariant $k_0\colon
  H\pi_0K(C)\to \Si^2 H\pi_1K(C)$ is modeled by the stable quadratic
  map $k_0\colon\pi_0K(C)\to \pi_1K(C)$,
  \[
  [x]\mapsto (e \fto{\cong} x\,x\,x^*\,x^* \fto{\beta_{x,x}\,x^*\,x^*} x\,x\,x^*\,x^* \fto{\cong} e),
  \]
  where $x$ is an object in $C$ and $x^*$ denotes an inverse of $x$.
\end{lem}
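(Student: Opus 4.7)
The plan is to verify that the displayed formula defines the stable quadratic map corresponding to $k_0$ under the Eilenberg--Mac Lane identification $[HA, \Sigma^2 HB] \cong \mathrm{SQ}(A, B)$ mentioned in the excerpt. The strategy has two main stages: reduction to a strict skeletal model, followed by identification of the $k$-invariant with the symmetry via naturality from a universal example.

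First, I would reduce to the case of a strict skeletal Picard category using \cref{thm:strskel_pic}, since both the Postnikov invariant $k_0$ and the displayed formula depend only on the equivalence class of $C$. In the strict skeletal case the underlying data becomes rigid: the objects form the abelian group $A = \pi_0 K(C)$, the only morphisms are automorphisms valued in $B = \pi_1 K(C)$, and the monoidal operation on objects is the group law on $A$. All categorical complexity is then concentrated in the symmetry, which provides a set-function $A \times A \to B$ via $(x,y) \mapsto \beta_{x,y}$. The symmetric monoidal axioms (hexagon and involutivity) force this to be bilinear in an appropriate sense and force $\beta_{x,x}$ to land in the $2$-torsion of $B$, so $x \mapsto \beta_{x,x}$ is genuinely a stable quadratic map. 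In the skeletal setting the composite appearing in the statement simplifies to $\beta_{x,x}$ itself, since $xx^* = e$ strictly.

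Second, I would identify this stable quadratic map with $k_0$ by naturality, using as universal example the free strict skeletal Picard category $F$ on a single generator. This $F$ models the $1$-truncation of the sphere spectrum $\mathbb{S}$, so its $k_0$ is the truncated Hopf map $\eta$, which corresponds under the Eilenberg--Mac Lane isomorphism to the stable quadratic map $\mathbb{Z} \to \mathbb{Z}/2$ sending $1$ to the nonzero element. For an arbitrary strict skeletal Picard category $C$ and an object $x \in C$, the universal property of $F$ produces a symmetric monoidal functor $F \to C$ sending the generator to $x$. Applying $K$-theory and invoking naturality of the stable Postnikov tower (diagram \eqref{eqn:postnikov-nat}) reduces the computation of $k_0([x])$ to the sphere case, and inspecting the image of $\eta$ along the induced map on $\pi_1 K$ recovers $\beta_{x,x}$, which, after conjugating by the unit isomorphisms involving $x$ and $x^*$ in a general (non-skeletal) $C$, is exactly the composite in the displayed formula.

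The main obstacle is the concrete identification of $k_0$ for $K(F)$ with the truncated Hopf map $\eta$. This amounts to computing the first nontrivial stable operation on the $K$-theory of the free strict skeletal Picard category on one generator, either via an explicit cellular model for its Postnikov tower or via a direct comparison with a known model of the $1$-truncated sphere spectrum. Since the lemma is cited from \cite{JO12Modeling}, I would invoke their argument for this crucial input rather than re-derive it; the remaining naturality reduction and the translation between the strict skeletal formula and the displayed non-skeletal formula are then straightforward verifications.
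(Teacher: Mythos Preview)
Your proposal is correct and close in spirit to the paper's argument, but the key mechanism differs. Both arguments first reduce to a strict skeletal model via \cref{thm:strskel_pic}, and both use a categorical model of the $1$-truncated sphere as the universal input. Where they diverge is in how the sphere enters: the paper invokes the categorical \emph{action} $S\times C\to C$ constructed in \cite{JO12Modeling}, shows that this action models the module structure of $KC$ over the sphere spectrum, and then reads off $k_0([x])$ as the image of $(\eta,\id_x)$ under the induced pairing $\pi_1(BS)\times\pi_1(BC,x)\to\pi_1(BC,e)$, appealing to the Eilenberg--Mac Lane identification of $k_0$ with precomposition by $\eta$. Your argument instead chooses an object $x$, classifies it by a symmetric monoidal functor from the free Picard category $F$, and uses naturality of the Postnikov layer \eqref{eqn:postnikov-nat} to transport $k_0^F(1)$ to $k_0^C([x])$. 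What the paper's action approach buys is that the identification of the stable quadratic map with ``precompose by $\eta$'' is stated once for all $x$ simultaneously and plugs directly into the Eilenberg--Mac Lane theorem; what your naturality approach buys is that it avoids setting up the module action and reduces everything to a single computation ($k_0$ of the sphere itself), at the cost of making that single computation carry the entire weight --- which, as you note, is exactly the step you must import from \cite{JO12Modeling} anyway.
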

\begin{rmk}
  The middle term of the composite $k_0(x)$ was studied in
  \cite{sinh1975,JS1993btc} and is called the \emph{signature} of $x$.
\end{rmk}
\begin{proof}[Proof of \cref{lem:k0-given-by-symm-1}]
  Note that $k_0\colon \pi_0 K(C)\to \pi_1 K(C)$ is a well-defined
  function (does not depend on the choices of $x$, $x^*$, and
  $xx^*\cong e$). Indeed, given isomorphisms $x\cong y$, $xx^*\cong e$
  and $yy^*\cong e$, there is a unique isomorphism $j\colon x^*\cong
  y^*$ such that
  \[
  \xymatrix{yx^*\ar[r]\ar[d]_{yj} & xx^*\ar[d] \\
  yy^* \ar[r] & e}
  \]
  commutes.

  Moreover, it is clear that $k_0$ is compatible with equivalences of
  Picard categories. By \cite[Theorem~2.2]{JO12Modeling}, we can thus
  replace $C$ by a strict skeletal Picard category. In
  [\textit{loc.~cit.}, Section~3], a natural action $S\times C\to C$
  is defined, where $S$ is a strict skeletal model for the
  1-truncation of the sphere spectrum. It follows from the definition
  of the action that
  \[
  \pi_1(BS)\times \pi_1(BC,x)\to \pi_1(BC,e)
  \] 
  sends $(\eta,\id_x)$ to $\beta_{x,x}x^*x^*$, where $\eta$ denotes
  the generator of $\pi_1(BS)\cong \bZ/2$. Finally it follows from
  [\textit{loc.~cit.}, Proposition~3.4] that the action $S \times C
  \to C$ models the truncation of the action of the sphere spectrum on
  $KC$, thus the image under the action of $(\eta,\id_x)$ agrees with
  the image of $[x]$ under the stable quadratic map associated to the
  bottom stable Postnikov invariant.
\end{proof}

\begin{prop}\label{lem:1-truncation}
  Let $\cD$ be a strict Picard 2-category and let $\cD\to d(\cD_1)$ be
  the unit of the adjunction in
  \cref{cor:_1adj_on_pgms_and_pics}. Then
  \[
  K(\cD)\to K\big(d(\cD_1)\big)
  \]
  is the 1-truncation of $K(\cD)$.
\end{prop}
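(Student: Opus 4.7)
The plan is to identify $K\cD \to K(d\cD_1)$ as the 1-truncation of $K\cD$ by verifying its two characteristic properties: (i) $\pi_i K(d\cD_1) = 0$ for $i \geq 2$, and (ii) the map induces isomorphisms on $\pi_0$ and $\pi_1$. Since $K\cD$ is connective with homotopy concentrated in degrees $0,1,2$ by \cref{lem:st_htpy_grps}, this suffices by the universal characterization of Postnikov truncations.

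First I would unpack $d\cD_1$ explicitly. By construction, $\cD_1 = (\pi_0)_*\cD$ has the same objects as $\cD$ and hom-sets $\cD_1(x,y) = \pi_0\cD(x,y)$, so $d\cD_1$ is the strict Picard 2-category with the same objects as $\cD$, hom-categories discrete on $\pi_0\cD(x,y)$, and only identity 2-cells. The unit $\cD \to d\cD_1$ of \cref{cor:_1adj_on_pgms_and_pics} is the identity on objects, sends each 1-cell to its isomorphism class, and collapses every 2-cell to an identity.

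Applying \cref{lem:st_htpy_grps} to $d\cD_1$ immediately yields $\pi_2 K(d\cD_1) = d\cD_1(e,e)(\id_e, \id_e)$, which is trivial because the hom-category is discrete; combined with the vanishing of $\pi_i$ for $i \geq 3$ from the same lemma, this establishes (i). For (ii) I invoke the naturality of the identifications in \cref{lem:st_htpy_grps}. On $\pi_0$, both sides are quotients of $\textrm{ob}\,\cD$ by the relation ``there exists a 1-cell'', and the two relations coincide since the 1-cells of $d\cD_1$ are precisely the isomorphism classes of 1-cells of $\cD$ and $\cD$ is Picard (so any 1-cell is an equivalence). On $\pi_1$, the induced map factors as
\[
\pi_0\cD(e,e) \longrightarrow \textrm{ob}\, d\cD_1(e,e) = \cD_1(e,e) = \pi_0\cD(e,e),
\]
and is therefore the identity under these identifications; note that the target's ``2-cell'' equivalence relation is trivial precisely because $d\cD_1$ has only identity 2-cells.

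There is no significant obstacle here beyond careful bookkeeping of the explicit algebraic formulas from \cref{lem:st_htpy_grps} and of the behavior of the unit map on cells of each dimension. The conceptual point, already isolated in \cref{lem:st_htpy_grps}, is that $\pi_2$ is detected only by 2-cells in $\cD(e,e)$, and the construction $d(\pi_0)_*$ is precisely the universal way to discard this data while preserving the information recorded by $\pi_0$ and $\pi_1$.
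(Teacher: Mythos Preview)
Your proof is correct and follows the same approach as the paper: apply the homotopy-group formulas of \cref{lem:st_htpy_grps} to both $\cD$ and $d\cD_1$, observe that $\pi_2$ vanishes for the latter because its hom-categories are discrete, and check that the unit induces isomorphisms on $\pi_0$ and $\pi_1$. The paper's proof is terser but identical in substance; your only unnecessary addition is invoking the Picard hypothesis for the $\pi_0$ comparison, since the relation ``there exists a 1-cell $x\to y$'' already agrees in $\cD$ and $d\cD_1$ simply because $d\cD_1(x,y)=\pi_0\cD(x,y)$ is nonempty if and only if $\cD(x,y)$ is.
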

\begin{proof}
  Using the formulas in \cref{lem:st_htpy_grps}, it is clear that
  $\cD\to d(\cD_1)$ induces an isomorphism on $\pi_0$ and $\pi_1$, and
  that $\pi_2 K\big( d(\cD_1)\big)=0$. Moreover, both $K$-theory
  spectra have $\pi_i = 0$ for $i > 2$, so $\cD_1$ models the
  1-truncation of $\cD$.
\end{proof}

\begin{lem}\label{lem:k-equals-k}
  For any permutative category $C$, the $K$-theory spectrum of $C$ is
  stably equivalent to the $K$-theory spectrum of the corresponding
  permutative Gray-monoid, $dC$.
\end{lem}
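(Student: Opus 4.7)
The plan is to compare the two $K$-theory constructions by exhibiting a natural stable equivalence $K(C) \to K(dC)$ coming from the fact that $dC$ carries the same combinatorial data as $C$ after taking classifying spaces. Both $K(C)$ and $K(dC)$ are connective $\Omega$-spectra: the former has underlying infinite loop space the group completion of the classifying space $BC$, and the latter has underlying infinite loop space the group completion of $B(dC)$, where $B$ denotes the geometric realization of the (2-categorical) nerve. The first key observation is that since every hom-category of $dC$ is discrete, the nerve of $dC$ agrees with (the constant bisimplicial set on) the nerve of $C$, so there is a natural homeomorphism $B(dC) \cong BC$. This identifies the zeroth spaces of the two spectra.

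The second step is to lift this identification to a map of $\Gamma$-objects (or multi-simplicial objects) underlying the two $K$-theory constructions. The $K$-theory spectrum of a permutative category is built from the Segal-style assembly of iterated products $C^{\times n}$ together with the symmetric monoidal structure, while the $K$-theory spectrum of a permutative Gray-monoid (from \cite{GO12Infinite,GJO2015KTheory}) is built from iterated Gray tensor products $(dC)^{\otimes n}$ together with the Gray-monoid structure. The comparison $\chi$ of \cref{prop:_1ssm} and \cref{rmk:d_*monoidal} provides natural 2-functors
\[
\chi \cn (dC)^{\otimes n} \to d(C^{\times n})
\]
which are identities on objects and 1-cells, and which collapse precisely the Gray structure 2-cells $\Sigma_{f,g}$ to identities. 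Because each $\Sigma_{f,g}$ is invertible, $\chi$ induces a weak equivalence on classifying spaces, so the two $\Gamma$-objects agree level-wise up to natural weak equivalence under $\chi$.

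The third step is then formal: a level-wise equivalence of the $\Gamma$-objects (or multisimplicial objects) feeding the two $K$-theory machines produces a stable equivalence of the resulting spectra, since both spectra are $\Omega$-spectra built from this data by the same bar-style procedure. Compatibility of $\chi$ with the permutative structures (the symmetry $\beta^{dC}$ was defined in \cref{cor:_1adj_on_pgms_and_pics} as $d(\beta^C) * \chi$) ensures that all the structure maps of the $\Gamma$-objects are intertwined by the comparison.

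The main obstacle is verifying in detail that the 2-categorical $K$-theory machine of \cite{GJO2015KTheory} reduces, when applied to an object in the image of $d$, to the classical Segal $K$-theory of the corresponding permutative category, up to the natural weak equivalences induced by $\chi$. This is essentially a bookkeeping check but requires unpacking the definition of the 2-categorical $K$-theory; the heart of the argument is the elementary observation that the comparison 2-functors $\chi$ only quotient invertible 2-cells and hence induce equivalences after taking classifying spaces, so no information is lost when passing between $(dC)^{\otimes n}$ and $d(C^{\times n})$.
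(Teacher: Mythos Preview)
The paper's own proof is a one-line citation to \cite[Remark~6.32]{GJO2015KTheory}, so there is no argument in the paper to compare against in detail; the content lives entirely in that reference, which records that the 2-categorical $K$-theory construction specializes to the classical one on permutative categories regarded as locally discrete Gray-monoids.

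Your proposal, however, rests on a misidentification of the 2-categorical $K$-theory machine.  You assert that $K(dC)$ is ``built from iterated Gray tensor products $(dC)^{\otimes n}$'', and then invoke the comparison 2-functors $\chi\colon (dC)^{\otimes n}\to d(C^{\times n})$ of \cref{rmk:d_*monoidal}.  But the constructions in \cite{GJO2015KTheory} do not use Gray tensor powers at level $n$.  The pseudofunctor $\cD^{(-)}$ of \cref{lem:pgm_to_psf} takes values in the \emph{cartesian} powers $\cD^{n}$, and the $\Gamma$-2-category $\widetilde{K}(\cD)$ is the more elaborate Segal-style object whose cells are described in the proof of \cref{naivek_equals_k}.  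Since $d$ is a right adjoint it preserves cartesian products, so $(dC)^{n}=d(C^{n})$ on the nose and no $\chi$ is needed; the actual comparison is more direct than what you sketch, and the discussion of collapsing the $\Sigma_{f,g}$ is beside the point.  Even if $\chi$ were relevant, your justification that it is a weak equivalence ``because each $\Sigma_{f,g}$ is invertible'' is not an argument: the map $\chi$ is a $\pi_{0}$-quotient on hom-categories, identifying genuinely distinct (though isomorphic) 1-cells such as $(f\otimes 1)(1\otimes g)$ and $(1\otimes g)(f\otimes 1)$, and one must appeal to coherence for the Gray tensor product to see that the resulting hom-functors are equivalences.  What you call ``the main obstacle'' in your final paragraph is in fact the entire content of the lemma, and your sketch does not address it.
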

\begin{proof}
  This follows directly from the formulas in \cite{GJO2015KTheory},
  and in particular Remark 6.32.
\end{proof}

For any connective spectrum $X$, the bottom stable Postnikov invariant
of $X$ and its 1-truncation $X_1$ agree. Thus combining
\cref{lem:k0-given-by-symm-1}, \cref{lem:1-truncation} and
\cref{lem:k-equals-k} yields the following result.
\begin{cor}\label{lem:k0-given-by-symm-2}
  Let $\cD$ be a strict Picard 2-category with unit $e$ and symmetry
  $\beta$.  Then the bottom stable Postnikov invariant $k_0\colon
  H\pi_0K(\cD)\to \Si^2 H\pi_1K(\cD)$ is modeled by the stable
  quadratic map $k_0\colon\pi_0K(\cD)\to \pi_1 K(\cD)$,
  \[
  [x]\mapsto [e \fto{\hty} x\,x\,x^*\,x^* \fto{\beta_{x,x}\,x^*\,x^*} x\,x\,x^*\,x^* \fto{\hty} e],
  \]
  where $x$ is an object in $\cD$ and $x^*$ denotes an inverse of $x$.
\end{cor}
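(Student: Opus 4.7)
The plan is to reduce to the 1-dimensional formula (\cref{lem:k0-given-by-symm-1}) by passing through the 1-truncation. The central homotopical fact is that for any connective spectrum $X$, the bottom Postnikov invariant $k_0\cn H\pi_0X\to \Si^2 H\pi_1X$ depends only on the 1-truncation $X_1$: since the canonical map $X\to X_1$ induces isomorphisms on $\pi_0$ and $\pi_1$, the naturality square \eqref{eqn:postnikov-nat} identifies the two $k_0$ invariants under these identifications.

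Applying this principle to $X = K(\cD)$, I would proceed in three steps. First, \cref{lem:1-truncation} identifies the 1-truncation of $K(\cD)$ with $K(d(\cD_1))$, produced by applying $K$-theory to the unit $\cD\to d(\cD_1)$ of the adjunction in \cref{cor:_1adj_on_pgms_and_pics}. Second, \cref{lem:k-equals-k} gives a stable equivalence $K(d(\cD_1))\simeq K(\cD_1)$, where $\cD_1$ is an ordinary strict Picard category. Third, \cref{lem:k0-given-by-symm-1} applied to $\cD_1$ produces the desired explicit formula for $k_0$ in terms of the symmetry of $\cD_1$.

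The remaining work is to translate this formula back to the 2-categorical setting. By construction (\cref{prop:_1ssm} and \cref{cor:_1adj_on_pgms_and_pics}), objects of $\cD_1$ are objects of $\cD$, morphisms of $\cD_1$ are isomorphism classes of 1-cells of $\cD$, and the symmetry $\beta^{\cD_1}_{x,x}$ is the isomorphism class of $\beta^{\cD}_{x,x}$; inverses in $\cD_1$ come from weak inverses $x^*$ in $\cD$. Combined with the identification of $\pi_1K(\cD)$ with isomorphism classes in $\cD(e,e)$ from \cref{lem:st_htpy_grps}, the 1-dimensional formula applied to $\cD_1$ lifts to the displayed composite $e\simeq xxx^*x^*\to xxx^*x^*\simeq e$ in $\cD$, with well-definedness under choices of $x^*$ and mediating equivalences inherited from the 1-dimensional case. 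I do not anticipate a substantive obstacle here: the bottom $k$-invariant encodes only the interplay of $\pi_0$ and $\pi_1$ via the symmetry, and this data is entirely visible at the 1-truncation level, so no genuinely 2-categorical phenomena intrude.
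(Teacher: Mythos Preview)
Your proposal is correct and follows precisely the paper's own argument: reduce to the 1-truncation using the fact that $k_0$ depends only on $X_1$, then invoke \cref{lem:1-truncation}, \cref{lem:k-equals-k}, and \cref{lem:k0-given-by-symm-1} in turn. Your additional paragraph unwinding the symmetry of $\cD_1$ in terms of that of $\cD$ is a welcome clarification but not strictly needed beyond what the paper sketches.
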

\begin{rmk}\label{rmk:sign_welldefined}
  It can be checked directly that the function
  $k_0\colon\textrm{ob}(\cD)\to \pi_1(K\cD)$ is well-defined using the
  essential uniqueness of the inverse: given another object
  $\overline{x}$ together with an equivalence $e \hty x \overline{x}$,
  there is an equivalence $x^* \hty \overline{x}$ and an isomorphism
  2-cell in the obvious triangle which is unique up to unique
  isomorphism.  This follows from the techniques in
  \cite{Gur2012Biequivalences}, and many of the details are explained
  there in Section 6.
\end{rmk}

In order to identify the composite $k_1i_1$ categorically, we analyze
the relationship between Postnikov layers and categorical suspension.

\begin{prop}\label{prop:susp-conn-cover}
  Let $\cD$ be a strict Picard 2-category and let $\Si\Omega\cD \to
  \cD$ be the counit of the adjunction in \cref{prop:pgm_permcat_adj}.
  Then
  \[
  K \big(\Si\Omega\cD\big) \to K (\cD)
  \]
  is a 0-connected cover of $K (\cD)$.
\end{prop}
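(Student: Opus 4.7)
The plan is to verify directly that the counit $\varepsilon\colon \Sigma\Omega\cD \to \cD$ induces an isomorphism on $\pi_i K$ for $i\geq 1$ and that $\pi_0 K(\Sigma\Omega\cD) = 0$; since $K$-theory lands in connective spectra (so $\pi_i=0$ for $i<0$), this is exactly what it means to be a 0-connected cover.

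First I would apply \cref{lem:st_htpy_grps} to $\Sigma\Omega\cD$. By construction, $\Sigma\Omega\cD$ has a single object $*$, with hom-category $\cD(e,e)$ and horizontal composition given by the monoidal product on $\cD(e,e)$ (which is the composition in $\cD$). In particular the unit object of $\Sigma\Omega\cD$ is $*$, whose identity 1-cell corresponds to $\id_e \in \cD(e,e)$, and the automorphism group of $\id_e$ in $\Sigma\Omega\cD(*,*)$ is $\cD(e,e)(\id_e,\id_e)$. Consequently,
\[
\pi_0 K(\Sigma\Omega\cD) = 0,\qquad
\pi_1 K(\Sigma\Omega\cD) \cong \mathrm{ob}\,\cD(e,e)/\!\sim,\qquad
\pi_2 K(\Sigma\Omega\cD) \cong \cD(e,e)(\id_e,\id_e),
\]
where the relation on $\mathrm{ob}\,\cD(e,e)$ is generated by existence of a 2-cell between 1-cells $e\to e$. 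These agree (as abstract groups) with the formulas for $\pi_1 K(\cD)$ and $\pi_2 K(\cD)$ from \cref{lem:st_htpy_grps}.

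Next I would verify that the counit realizes these isomorphisms. As recalled in the proof of \cref{prop:pgm_permcat_adj}, the counit $\varepsilon\colon \Sigma\Omega\cD \to \cD$ sends $*$ to $e$ and is the identity inclusion $\cD(e,e) \to \cD(e,e)$ on the unique hom-category. Tracing through the formulas of \cref{lem:st_htpy_grps}, the induced map on $\pi_1$ is the quotient of the identity function on $\mathrm{ob}\,\cD(e,e)$ by the 2-cell equivalence relation on each side, hence an isomorphism; and the induced map on $\pi_2$ is literally the identity on $\cD(e,e)(\id_e,\id_e)$. All higher stable homotopy groups of both $K(\Sigma\Omega\cD)$ and $K(\cD)$ vanish by \cref{lem:st_htpy_grps}, so $K(\varepsilon)$ is a $\pi_i$-isomorphism for all $i\geq 1$.

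Finally I would conclude by observing that a map of connective spectra that is a $\pi_i$-isomorphism in positive degrees and whose source has vanishing $\pi_0$ is a 0-connected cover. The only subtlety to be careful about is the identification of the automorphism group $\Sigma\Omega\cD(*,*)(\id_*,\id_*)$ with $\cD(e,e)(\id_e,\id_e)$ under $\varepsilon$, but this is immediate since composition in $\Sigma\Omega\cD$ is defined to agree with composition in $\cD$ on the hom-category at $e$, and $\varepsilon$ is the identity there; there is no real obstacle beyond bookkeeping.
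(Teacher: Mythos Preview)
Your proposal is correct and follows essentially the same approach as the paper's own proof: both apply \cref{lem:st_htpy_grps} to see that $\Sigma\Omega\cD$ has trivial $\pi_0$ and that the counit induces isomorphisms on $\pi_1$ and $\pi_2$, with all higher groups vanishing. You spell out a bit more explicitly why the counit realizes these isomorphisms (via its description on the hom-category at $e$), but this is just an elaboration of what the paper leaves as ``clear from the formulas.''
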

\begin{proof}
  It is clear from the formulas in \cref{lem:st_htpy_grps} that
  $\Si\Omega\cD \to \cD$ induces an isomorphism on $\pi_1$ and
  $\pi_2$, and moreover, the corresponding $K$-theory spectra have
  $\pi_i = 0$ for $i > 2$. Since $\Si\Omega\cD$ has only one object,
  we have $\pi_0 K(\Si \Omega \cD)=0$, so $\Si\Omega\cD$ models the
  0-connected cover of $\cD$.
\end{proof}

In addition to the elementary algebra and homotopy theory of Picard
2-categories discussed above, we require the following result.
\begin{thm}\label{thm:sigmak_equals_ksigma}
  Let $C$ be a permutative category.  Then $\Si K(C)$ and $K(\Si C)$
  are stably equivalent.
\end{thm}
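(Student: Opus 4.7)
The plan is to construct an explicit natural comparison map $\phi \cn \Si K(C) \to K(\Si C)$ and then verify it is a stable equivalence. The categorical input is the unit of the $\Si \dashv \Om$ adjunction of \cref{prop:pgm_permcat_adj}, which is the identity $C = \Om(\Si C)$. Applying $K$-theory yields $K(C) \to K(\Om \Si C)$; composing with a canonical natural transformation $K \circ \Om \Rightarrow \Om \circ K$, evaluated at $\Si C$, and adjoining across the stable suspension--loop adjunction produces $\phi$.

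The natural transformation $K(\Om \cD) \to \Om K(\cD)$ should exist because the $K$-theory machine of \cite{GJO2015KTheory} is built from a 2-categorical Segal-style construction whose zeroth spectrum space is a group completion of $B\cD$. When $\cD = \Si C$ has a single object, the nerve computations of \cite{CCG10Nerves,Jar91super} identify $B(\Si C)$ as a delooping of $BC$; this provides the comparison at the level of zeroth spaces, and extends by naturality of Segal's machine to a map of spectra.

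To show $\phi$ is a stable equivalence, I plan to use the strictification results of \cref{sec:strictification} to replace the $\Gamma$-2-category underlying $K(\Si C)$ by one that is strict enough to be compared level-by-level with the $\Gamma$-space underlying $\Si K(C)$. Once this reduction is in place, it suffices to check that the induced map on group-completed zeroth spaces is a weak equivalence, since both spectra are connective. A cofinality or Quillen's Theorem A argument could then reduce the check to the delooping fact already mentioned, together with the standard fact that group completion of a delooping agrees with the loop of a group completion in the grouplike case.

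The hardest step will be making the informal comparison between the two $\Gamma$-style machines genuinely strict and natural. The $K$-theory of a symmetric monoidal 2-category inherently involves the Gray tensor product and pseudo-level coherence data that do not rigidify without effort, and the $\Si$ construction of \cref{lem:permcat_to_pgm} manufactures exactly such Gray 2-cells $\Si_{f,g}$ from the symmetry of $C$. This is precisely the place where the 2-monad theory of \cref{sec:strictification} is needed: to convert naturally arising pseudo-commutative diagrams of permutative Gray-monoids into strictly commutative ones up to stable equivalence. Without such strictification the comparison map only exists up to coherent higher equivalence, and establishing a true stable equivalence of spectra becomes substantially more delicate.
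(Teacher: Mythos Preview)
Your proposal is not a proof but an outline, and the outline diverges substantially from the paper's argument while leaving the central difficulty unresolved.

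The paper does \emph{not} construct a single comparison map $\Si K(C)\to K(\Si C)$. Instead it works entirely at the level of $\Ga\times\De^{\op}$-diagrams in $\cat$. The key observation is \cref{prop:psfuns_equal}: the pseudofunctor $\Phi(C^{(-)}\circ\sma)$ obtained by smashing with $\De^{\op}$ and the pseudofunctor $N\circ(\Si C)^{(-)}$ obtained from the nerve of the suspension are \emph{literally equal}, not merely equivalent. Once this is in hand, \cref{thm:sigmak_equals_ksigma_full} produces a zigzag of levelwise equivalences between $Q_j(C^{(-)})\circ\sma$ and $K_{adj}(\Si C)$ by passing through that common pseudofunctor and applying the units and counits of the strictification 2-adjunctions of \cref{cor:J_Q_adj} and \cref{prop:functor_2monadic}. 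The comparison with the $K$-theory of \cite{GJO2015KTheory} is handled separately in \cref{naivek_equals_k}. The output is a zigzag, not a map.

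Your plan rests on a natural transformation $K\circ\Om\Rightarrow\Om\circ K$ of spectrum-valued functors, which you say ``should exist'' because $B(\Si C)$ deloops $BC$. But promoting that unstable delooping to a map of $\Ga$-objects (or of spectra) is exactly the content of the theorem: one must exhibit, coherently over all of $\sF$, how the Segal-style machine for permutative Gray-monoids evaluated at $\Si C$ relates to the machine for permutative categories evaluated at $C$. You never say what this map is on $\Ga$-levels, and your appeal to ``naturality of Segal's machine'' does not supply it, since two different machines are in play. The paper's route via \cref{prop:psfuns_equal} is precisely what makes this comparison tractable: once the two pseudofunctors into $\bicat(\De^{\op},\cat_2)$ coincide, the only remaining discrepancy is between different strictifications of the same pseudofunctor, and that is what the 2-monad theory of \cref{sec:strictification} handles. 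Your invocation of strictification is too vague to do this work; you would still need something playing the role of \cref{prop:psfuns_equal} to have anything to strictify.

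A secondary issue: even granting a map $\phi$, reducing to zeroth spaces and then to ``the delooping fact already mentioned'' is circular. The unstable statement $B(\Si C)\simeq BBC$ identifies a single delooping; it does not by itself identify the infinite loop structures, which is what comparing $\Om^{\infty}\Si K(C)$ with $\Om^{\infty}K(\Si C)$ requires.
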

The proof of \cref{thm:sigmak_equals_ksigma} requires a nontrivial
application of 2-monad theory.  We develop the relevant 2-monadic
techniques in \cref{sec:strictification} and give the proof in
\cref{sec:suspension-models-suspension}.  These two sections are
independent of the preceding sections.

\begin{lem}\label{lem:k1i1-given-by-symm-2}
  Let $(\cD, \oplus, e)$ be a strict Picard 2-category. Then the
  composite
  \[
  k_1 i_1\colon \Si H\pi_1 K(\cD)\to \Si^3 H\pi_2 K(\cD)
  \]
  is modeled by the stable quadratic map $\pi_1 K(\cD) \to \pi_2
  K(\cD)$, 
  \[
  \quad [f]\mapsto (\id_e \fto{\iso} 
  f\circ f\circ f^*\circ f^* \fto{\Si_{f,f}\,f^*\circ f^*} 
  f\circ f\circ f^*\circ f^* \fto{\iso} 
  \id_e), 
  \] 
  where $f\colon e\to e$ is a 1-cell in $\cD$ and $f^*$ denotes an
  inverse of $f$.
\end{lem}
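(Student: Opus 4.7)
The plan is to reduce the identification of $k_1 i_1$ for $K(\cD)$ to the one-dimensional formula for $k_0$ (\cref{lem:k0-given-by-symm-1}) applied to the permutative category $\Om\cD = \cD(e,e)$, by passing to the $0$-connected cover and then invoking \cref{thm:sigmak_equals_ksigma}.

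First I would apply \cref{prop:susp-conn-cover}: the counit $\Si\Om\cD \to \cD$ models the $0$-connected cover of $K(\cD)$, so the induced map $K(\Si\Om\cD)\to K(\cD)$ is an isomorphism on $\pi_1$ and $\pi_2$. Naturality of the Postnikov tower \eqref{eqn:postnikov-nat} for $n=1$ then identifies the composite $k_1 i_1$ of $K(\cD)$ with the corresponding composite for $K(\Si\Om\cD)$. Since $K(\Si\Om\cD)$ is $0$-connected, its $1$-truncation is already $\Si H \pi_1 K(\Si\Om\cD)$, so $i_1$ is an equivalence and the problem reduces to identifying the bottom nontrivial $k$-invariant of $K(\Si\Om\cD)$.

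Next, \cref{thm:sigmak_equals_ksigma} provides a stable equivalence $K(\Si\Om\cD) \simeq \Si K(\Om\cD)$. Under this equivalence the bottom Postnikov invariant of $\Si K(\Om\cD)$ is the suspension of $k_0$ of $K(\Om\cD)$, together with the identifications $\pi_0 K(\Om\cD) \cong \pi_1 K(\cD)$ and $\pi_1 K(\Om\cD) \cong \pi_2 K(\cD)$ from \cref{lem:st_htpy_grps}. Everything thus reduces to computing $k_0$ of the permutative category $\Om\cD$.

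Finally, I would apply \cref{lem:k0-given-by-symm-1} to $\Om\cD$. By the proof of \cref{lem:pgm_to_permcat}, the permutative structure on $\cD(e,e)$ has monoidal product given by composition in $\cD$, unit $\id_e$, and symmetry $\beta_{f,g}$ equal to the image of the Gray structure 2-cell $\Si_{f,g}$. Substituting these data into the one-dimensional formula yields precisely the stable quadratic map displayed in the statement, where an inverse $f^*$ of $f$ in the sense of \cref{lem:k0-given-by-symm-1} is a quasi-inverse 1-cell together with a chosen invertible 2-cell $f\circ f^* \iso \id_e$.

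The main obstacle is bookkeeping: verifying that the stable equivalence of \cref{thm:sigmak_equals_ksigma} is natural enough to transport $k_0$ to $k_1$ under suspension, that the degree shifts align the source and target groups correctly under $\pi_i(\Si Y)\cong \pi_{i-1}(Y)$, and that the resulting formula is independent of the choices of $f^*$ and the coherence 2-cell witnessing $f\circ f^*\iso \id_e$, as in \cref{rmk:sign_welldefined}.
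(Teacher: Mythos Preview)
Your proposal is correct and follows essentially the same route as the paper: pass to the $0$-connected cover via \cref{prop:susp-conn-cover} and naturality of the Postnikov tower, use \cref{thm:sigmak_equals_ksigma} to identify $K(\Si\Om\cD)\simeq \Si K(\Om\cD)$ so that $k_1 i_1$ becomes $\Si k_0$ of $K(\Om\cD)$, and then apply \cref{lem:k0-given-by-symm-1} with the symmetry of $\Om\cD$ given by the Gray structure cells $\Si_{f,g}$. Your additional remarks about $i_1$ being an equivalence for a $0$-connected spectrum and about well-definedness are correct elaborations, but do not change the argument.
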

\begin{proof}
  We use superscripts to distinguish Postnikov data of different
  spectra. The composite $k_1^\cD i_1^\cD$ in the first Postnikov
  layer of the spectrum $K(\cD)$ identifies with the composite $k_1
  ^{\Si\Omega\cD} i_1^{\Si \Omega\cD}$ since $K(\Si\Omega\cD)$
  is the $0$-connected cover of $K(\cD)$ by
  \cref{prop:susp-conn-cover} and the Postnikov tower can be
  constructed naturally (\cref{eqn:postnikov-nat}).
  
  Since $K(\Si\Omega\cD)\hty \Si K(\Omega \cD)$ by
  \cref{thm:sigmak_equals_ksigma} and $K(\Omega\cD)$ is connective,
  it follows that
  \[
  k_1 ^{\Si\Omega\cD} i_1^{\Si \Omega\cD} = \Si(k_0^{\Omega\cD}i_0^{\Omega\cD})=\Si(k_0^{\Omega \cD})
  \]
  in the stable homotopy category.

  Finally, we deduce from \cref{lem:k0-given-by-symm-1} that the map
  $\Si(k_0^{\Omega\cD})$ is represented by the desired group
  homomorphism.
\end{proof}  

\subsection{Application to strict skeletal Picard 2-categories}
Now we make an observation about the structure 2-cells $\Sigma_{f,g}$
in a strict Picard 2-category.  This algebra will be a key input for
our main application,
\cref{thm:nostrskelmod}.
\begin{lem}\label{lem:sigma-triv}
  Let $(\cD, \oplus, e)$ be a strict Picard 2-category.  Let $g\cn e \to
  e$ be any 1-cell and let $s = \beta_{x,x}\,x^*\,x^*$ be a
  representative of the signature of some object $x$ with inverse
  $x^*$.  Then $\Sigma_{s,g}$ and $\Si_{g,s}$ are identity 2-cells in $\cD$.
\end{lem}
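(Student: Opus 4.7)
The plan is to reduce the claim to a simpler statement about $\beta_{x,x}$ alone, and then use 2-naturality of $\beta$ combined with the axioms of the permutative Gray-monoid. Since $s = \beta_{x,x}\oplus 1_{x^*x^*}$ extends the 1-cell $\beta_{x,x}\colon xx\to xx$ by identity 1-cells under the monoidal product $\oplus$, and the Gray structure 2-cells satisfy $\Sigma_{1_c, g} = \id$ together with bilinearity in both slots with respect to $\oplus$, it suffices to prove the reduced claim that $\Sigma_{\beta_{x,x}, g} = \id$ and $\Sigma_{g, \beta_{x,x}} = \id$ for any 1-cell $g\colon e\to e$.

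The first key observation is that $\beta_{x,x}$ is the component at $(x,x)\in\cD\otimes\cD$ of the 2-natural isomorphism $\beta\colon\oplus\Rightarrow\oplus\tau$. I would apply 2-naturality of $\beta$ at the Gray structure 2-cell $\Sigma^{\cD\otimes\cD}_{\beta_{x,x}\otimes 1_e,\, 1_{xx}\otimes g}$ in $\cD\otimes\cD$; its source and target both lie over the object $(xx, e)$, at which the component $\beta_{xx,e}$ is the identity 1-cell by Axiom 2 of the permutative Gray-monoid. After unwinding the action of the swap $\tau$ on Gray structure 2-cells (which replaces a generating 2-cell with the inverse of the corresponding 2-cell between swapped factors), the naturality equation collapses to the identification $\Sigma_{\beta_{x,x}, g} = (\Sigma_{g, \beta_{x,x}})^{-1}$.

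The main obstacle is upgrading this self-inverse relation to an outright equality $\Sigma_{\beta_{x,x}, g} = \id$. For this I would combine the above with Axiom 1 of the permutative Gray-monoid, which asserts $\beta_{x,x}\circ\beta_{x,x} = \id_{xx}$, together with the bilinearity of $\Sigma$ in composition of the first slot: this yields the 2-cell identity
\[
(\Sigma_{\beta_{x,x},g}\ast\beta_{x,x})\cdot(\beta_{x,x}\ast\Sigma_{\beta_{x,x},g}) \;=\; \Sigma_{\id_{xx},g} \;=\; \id
\]
in the hom-category $\cD(xx,xx)$. Combining this pasting equation with the self-inverse relation and using invertibility of whiskering by $\beta_{x,x}$ (which is its own strict inverse) should force $\Sigma_{\beta_{x,x},g}$ to be the identity 2-cell. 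The symmetric claim $\Sigma_{g,s} = \id$ then follows either by running the same argument with the two Gray tensor factors exchanged, or by invoking the derived identity $\Sigma_{g,s} = (\Sigma_{s,g})^{-1}$.
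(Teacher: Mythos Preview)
Your reduction step and Step~1 are both sound and match the paper's strategy: the paper also reduces to showing that $\Sigma_{\beta_{x,x},h}$ and $\Sigma_{h,\beta_{x,x}}$ are identities (using associativity of the Gray-monoid for $\Sigma_{g,s}$ and a three-factor pasting axiom for $\Sigma_{s,g}$), and your derivation of $\Sigma_{\beta_{x,x},g}=(\Sigma_{g,\beta_{x,x}})^{-1}$ from 2-naturality of $\beta$ at the object $(xx,e)$ with $\beta_{xx,e}=\id$ is correct.

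The gap is in Step~3. From $\beta_{x,x}\circ\beta_{x,x}=\id_{xx}$ and bilinearity in composition you correctly obtain
\[
(\Sigma_{\beta_{x,x},g}\ast\beta_{x,x})\cdot(\beta_{x,x}\ast\Sigma_{\beta_{x,x},g})=\id,
\]
but writing $A=\Sigma_{\beta_{x,x},g}$ and $p=\beta_{x,x}$, all this yields after whiskering by $p$ is $p\ast A\ast p=A^{-1}$. Combined with Step~1 this says $A=B^{-1}$ and (running the same argument for $B$) $p\ast B\ast p=B^{-1}$, which is consistent with but does not force $A=\id$. Concretely, nothing in your argument rules out a nontrivial invertible 2-cell $A$ that happens to satisfy $p\ast A\ast p=A^{-1}$; the ``self-inverse relation'' $A=B^{-1}$ gives no leverage here because $B$ is a \emph{different} 2-cell, not $A$ itself. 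You have used only Axioms~1 and~2 of the permutative Gray-monoid together with 2-naturality of $\beta$; these alone do not force $\Sigma_{\beta_{x,x},g}$ to be trivial.

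The paper avoids this by citing \cite[Proposition~3.41]{GJO2015KTheory} for the blanket fact that $\Sigma_{\beta_{y,z},h}$ and $\Sigma_{h,\beta_{y,z}}$ are identity 2-cells for \emph{any} $h$. That argument uses the full permutative structure, in particular the third (hexagon-type) axiom relating $\beta$ to the monoidal product, which is precisely the ingredient your Step~3 is missing. Once that is in hand, the reduction you outlined (and which the paper carries out via associativity and the three-factor interchange axiom from \cite[Proposition~3.3]{Gurski13Coherence}) finishes the proof.
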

\begin{proof}
  By naturality of the symmetry and interchange,
  $\Sigma_{\beta_{y,z},h}$ and $\Sigma_{h,\be_{y,z}}$ are identity
  2-cells for any 1-cell $h$ \cite[Proposition 3.41]{GJO2015KTheory}.  The
  result for $\Si_{g,s}$ follows by noting that $\Si_{g,f}w = \Si_{g,
    fw}$ for any 1-cells $f,g$ and object $w$ by the associativity
  axiom for a Gray-monoid.  Hence
  $\Si_{g,s}=\Si_{g,\be_{x,x}x^*x^*}=\Si_{g,\be_{x,x}}x^*x^*$, which
  is the identity 2-cell.
    
  For the other equality, we note the final axiom of
  \cite[Proposition 3.3]{Gurski13Coherence}
  reduces to the following equality of pasting diagrams for objects
  $y$, $z$, $w$ with endomorphisms $t_y$, $t_z$, $t_w$ respectively.
  \[
  \def\objectstyle{\scriptstyle}
  \def\labelstyle{\textstyle}
  \begin{array}{c}
  \begin{xy}
    0;<.85mm,0mm>:<0mm,1mm>:: 
    (0,0)*+{yzw}="LL";
    (20,20)*+{yzw}="TL";
    (20,-20)*+{yzw}="BL";
    (50,20)*+{yzw}="TR";
    (50,-20)*+{yzw}="BR";
    (70,0)*+{yzw}="RR";
    (30,0)*+{yzw}="CC";  
    {\ar^-{\scriptstyle t_y \, z \, w} "LL"; "TL"};
    {\ar^-{\scriptstyle y \, t_z \, w} "TL"; "TR"};
    {\ar^-{\scriptstyle y \, z \, t_w} "TR"; "RR"};
    {\ar_-{\scriptstyle y \, z \, t_w} "LL"; "BL"};
    {\ar_-{\scriptstyle y \, t_z \, w} "BL"; "BR"};
    {\ar_-{\scriptstyle t_y \, z \, w} "BR"; "RR"};
    {\ar^-{\scriptstyle y \, t_z \, w} "LL"; "CC"};
    {\ar_-{\scriptstyle t_y \, z \, w} "CC"; "TR"};
    {\ar^-{\scriptstyle y \, z \, t_w} "CC"; "BR"};
    {\ar@{=>}_-{\Sigma_{t_y,t_z} \, w} "CC"+(-5,8); "TL"+(3,-7) };
    {\ar@{=>}^-{y \, \Sigma_{t_z,t_w}} "BL"+(5,8); "CC"+(-2,-7) };
    {\ar@{=>}_-{\ \Sigma_{(t_y \, z), t_w}} "BR"+(-3,18); "TR"+(-3,-18) };
  \end{xy}
  \begin{xy}
    0;<2.2mm,0mm>:<0mm,2.2mm>:: 
    {\ar@{=}^{} (1,0)*+{}; (-1,0)*+{};};
  \end{xy}
  \begin{xy}
    0;<.85mm,0mm>:<0mm,1mm>:: 
    (0,0)*+{yzw}="LL";
    (20,20)*+{yzw}="TL";
    (20,-20)*+{yzw}="BL";
    (50,20)*+{yzw}="TR";
    (50,-20)*+{yzw}="BR";
    (70,0)*+{yzw}="RR";
    (40,0)*+{yzw}="CC";  
    {\ar^-{\scriptstyle t_y \, z \, w} "LL"; "TL"};
    {\ar^-{\scriptstyle y \, t_z \, w} "TL"; "TR"};
    {\ar^-{\scriptstyle y \, z \, t_w} "TR"; "RR"};
    {\ar_-{\scriptstyle y \, z \, t_w} "LL"; "BL"};
    {\ar_-{\scriptstyle y \, t_z \, w} "BL"; "BR"};
    {\ar_-{\scriptstyle t_y \, z \, w} "BR"; "RR"};
    {\ar_-{\scriptstyle y \, z \, t_w} "TL"; "CC"};
    {\ar^-{\scriptstyle t_y \, z \, w} "BL"; "CC"};
    {\ar^-{\scriptstyle y \, t_z \, w} "CC"; "RR"};
    {\ar@{=>}^-{\Sigma_{t_y, (z \, t_w)}} "BL"+(3,18); "TL"+(3,-18) };
    {\ar@{=>}_-{\Sigma_{t_y,t_z} \, w} "BR"+(-5,8); "CC"+(3,-7) };
    {\ar@{=>}^-{y \, \Sigma_{t_z,t_w}} "CC"+(5,8); "TR"+(-2,-7) };
  \end{xy}
  \end{array}
  \]
  Thus the result for $\Si_{s,g}$ follows by taking $(y, z, w) =
  (x\,x, x^*\,x^*, e)$, $t_y = \beta_{x,x}$, $t_z = \id$, and $t_w =
  g$.
\end{proof}

We are now ready to give our main application
regarding stable Postnikov data of strict skeletal Picard 2-categories.
\begin{thm}\label{thm:nostrskelmod}
  Let $\cD$ be a strict skeletal Picard 2-category and assume that
  \[k_0\colon \pi_0 K(\cD) \to \pi_1 K(\cD)\]
  is surjective. Then $k_1i_1$ is trivial.
\end{thm}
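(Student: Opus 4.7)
The plan is to show that the stable quadratic map $\pi_1 K(\cD) \to \pi_2 K(\cD)$ modeling $k_1 i_1$ (via \cref{lem:k1i1-given-by-symm-2}) vanishes on every class. The strategy is to pull an arbitrary class $[f] \in \pi_1 K(\cD)$ back along $k_0$ to a signature, where \cref{lem:sigma-triv} kills the Gray structure 2-cell appearing in the middle of the formula for $k_1 i_1$.

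Concretely, given $[f] \in \pi_1 K(\cD)$, I would use surjectivity of $k_0$ together with \cref{lem:k0-given-by-symm-2} to find an object $x$ of $\cD$ with $k_0([x]) = [f]$. The skeletal hypothesis forces $x \oplus x^* = e$ (since $x \oplus x^* \simeq e$), and hence $x\,x\,x^*\,x^* = e$, so the signature $s := \beta_{x,x}\, x^*\, x^*$ is a genuine 1-cell $e \to e$. The outer ``canonical iso'' 1-cells in the formula for $k_0([x])$ are autoequivalences of $e$ that are mutually inverse in the bicategorical sense (being derived from a single equivalence $x\,x^* \simeq e$), so their $\pi_1$-classes cancel by abelian-ness, leaving $[s] = k_0([x]) = [f]$. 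Because $k_1 i_1$ descends to a well-defined function on $\pi_1$, we may replace $[f]$ by $[s]$.

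Next I would apply \cref{lem:sigma-triv} with $g = s$. Since $s$ is itself a signature, $\Sigma_{s,s}$ is the identity 2-cell, and whiskering by $s^*\, s^*$ preserves this, so the middle 2-cell in the formula of \cref{lem:k1i1-given-by-symm-2} applied to $[s]$ is the identity. The two outer 2-isomorphisms $\id_e \cong s\,s\,s^*\,s^*$ can both be built from a single 2-iso $\epsilon \cn s\, s^* \Rightarrow \id_e$ and its inverse, yielding mutually inverse 2-cells in the vertical direction. Their vertical composite around the identity 2-cell is therefore the identity on $\id_e$, which represents $0 \in \pi_2 K(\cD)$. Hence $k_1 i_1([f]) = 0$ for every $[f]$, proving triviality.

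The main subtlety I anticipate is the bookkeeping of well-definedness throughout: verifying that the ``canonical isos'' in both \cref{lem:k0-given-by-symm-2} and \cref{lem:k1i1-given-by-symm-2} can be chosen coherently so that the cancellation arguments are rigorous. Given the careful treatment already present (cf.\ \cref{rmk:sign_welldefined}) and the strength of the skeletal hypothesis, I expect this to be routine but not entirely automatic.
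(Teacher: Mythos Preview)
Your proposal is correct and follows essentially the same approach as the paper: pull back along $k_0$, use skeletalness to make the signature $s=\beta_{x,x}\,x^*\,x^*$ an endomorphism of $e$, and invoke \cref{lem:sigma-triv} to kill $\Sigma_{s,s}$. The paper's argument is slightly more direct than yours: instead of arguing that the outer equivalences $w,w^*$ cancel in $\pi_1$ by abelianness, it uses the freedom of choice guaranteed by \cref{rmk:sign_welldefined} together with skeletalness to take $u\colon e\simeq xx^*$ equal to $\id_e$, so that the representative $f$ is \emph{literally} $s$ and the cancellation bookkeeping you anticipate never arises.
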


\begin{proof}
  We prove that the stable quadratic map $\pi_1 K(\cD) \to \pi_2
  K(\cD)$ from \cref{lem:k1i1-given-by-symm-2} that models the
  composite $k_1i_1$ is trivial.  Since $k_0$ is surjective by
  assumption, it suffices to consider $k_1i_1(f)$ for $f$ of the form
  \begin{equation}\label{eqn:k0_decomp}
  e \fto{w} x\,x\,x^*\,x^* \fto{\beta_{x,x}\,x^*\,x^*}
  x\,x\,x^*\,x^* \fto{w^*} e  
  \end{equation}
  for some object $x$ with inverse $x^*$. Here $w$ denotes the
  composite 
  \[
  e \fto{u} x\,x^* \fto{x\,u\,x^*} x\,x\,x^*\,x^*
  \]
  for a chosen equivalence $u\cn e\hty x\,x^*$ and $w^*$ denotes the
  corresponding reverse composite for a chosen $u^*\cn x\,x^* \hty e$
  inverse to $u$.  Note that the isomorphism class of $f$ is
  independent of the choices of the inverse object $x^*$ and the
  equivalences $u$ and $u^*$ (see \cref{rmk:sign_welldefined}).  Since
  $\cD$ is skeletal, it must be that $xx^* = e$.  Therefore we can
  choose the equivalence $u\cn e\hty xx^*$ to be $\id_e$ and then
  choose $u^*$ to be $\id_e$ as well.  With these choices, the
  composite $f$ is actually equal to $\beta_{x,x}\,x^*\,x^*$. By
  \cref{lem:sigma-triv} the Gray structure 2-cell $\Sigma_{f,f}$ is
  the identity 2-cell $\id_{f \circ f}$. This implies that $k_1 i_1
  (f) = \id_{\id_e}$.
\end{proof}

\begin{rmk}\label{rmk:differential}
  The result of \cref{thm:nostrskelmod} may be viewed as the
  computation of a differential in the spectral sequence arising from
  mapping into the stable Postnikov tower of $K\cD$.  This spectral
  sequence appears, for example, in \cite{Kah66spectral} and is a
  cocellular construction of the Atiyah-Hirzebruch spectral sequence
  (see \cite[Appendix B]{GM95Generalized}).
\end{rmk}

Our most important application concerns the sphere spectrum. 
\begin{cor}\label{cor:no_model}
  Let $\cD$ be a strict skeletal Picard 2-category.  Then $\cD$ cannot
  be a model for the 2-truncation of the sphere spectrum.
\end{cor}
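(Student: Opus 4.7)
The plan is to apply \cref{thm:nostrskelmod} directly, showing that the 2-truncation of the sphere spectrum $\mathbb{S}$ satisfies the hypothesis (surjective $k_0$) but violates the conclusion (trivial $k_1 i_1$). So the strategy is to identify the relevant stable Postnikov data of $\mathbb{S}$ and observe an incompatibility.

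First, I would recall the stable homotopy groups $\pi_0 \mathbb{S} = \mathbb{Z}$, $\pi_1 \mathbb{S} = \mathbb{Z}/2$ generated by the Hopf map $\eta$, and $\pi_2 \mathbb{S} = \mathbb{Z}/2$ generated by $\eta^2$. Under the identification of $[H\pi_0, \Si^2 H\pi_1]$ with stable quadratic maps $\pi_0 \to \pi_1$ (via \cite[Equation (27.1)]{EM54groupsII}), the bottom Postnikov invariant $k_0$ of $\mathbb{S}$ is precomposition with $\eta$, hence the map $\mathbb{Z} \to \mathbb{Z}/2$ sending $1 \mapsto \eta$. This map is surjective. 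Similarly, $k_1 i_1$ corresponds under the same identification to the stable quadratic map $\pi_1 \mathbb{S} \to \pi_2 \mathbb{S}$ sending $\eta \mapsto \eta^2$, which is non-trivial since $\eta^2 \neq 0$ in $\pi_2 \mathbb{S}$.

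Now suppose for contradiction that there were a strict skeletal Picard 2-category $\cD$ whose $K$-theory spectrum $K\cD$ is stably equivalent to the 2-truncation of $\mathbb{S}$. Then the Postnikov data of $K\cD$ agrees with that of the 2-truncation of $\mathbb{S}$, so in particular $k_0 \cn \pi_0 K\cD \to \pi_1 K\cD$ is surjective. By \cref{thm:nostrskelmod}, the composite $k_1 i_1$ of $K\cD$ is then trivial. But this contradicts the fact that $k_1 i_1$ for $\mathbb{S}$ is non-trivial. Therefore no such $\cD$ exists.

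The main point (and effectively the only nontrivial input) is the invocation of \cref{thm:nostrskelmod}; given that theorem, the rest is a matter of recording the well-known values of $\eta$ and $\eta^2$ in the low stable stems and observing the compatibility of Postnikov invariants under stable equivalence. There is no real obstacle beyond this bookkeeping, as the heavy lifting has already been carried out in \cref{lem:k0-given-by-symm-2}, \cref{lem:k1i1-given-by-symm-2}, and \cref{thm:nostrskelmod}.
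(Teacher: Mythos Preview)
Your proposal is correct and follows essentially the same approach as the paper: verify that $k_0$ is surjective for the sphere spectrum, invoke \cref{thm:nostrskelmod}, and observe that this contradicts the known nontriviality of $k_1 i_1$. The only cosmetic difference is that the paper identifies $k_1 i_1$ as $Sq^2 \in H^2(\bZ/2;\bZ/2)$ rather than via $\eta \mapsto \eta^2$, but these are equivalent descriptions of the same nontrivial element.
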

\begin{proof}
  The nontrivial element in $\pi_1$ of the sphere spectrum is given by
  $k_0(1)$, so $k_0$ is surjective and therefore
  \cref{thm:nostrskelmod} applies.  But $k_1i_1$ is $Sq^2$, which is the
  nontrivial element of $H^2(\bZ/2; \bZ/2)$ \cite[pp.~117--118]{MT1968Cohomology}.    
\end{proof}

\begin{rmk}
  To understand the meaning of this result, recall that one can
  specify a unique Picard category by choosing two abelian groups for
  $\pi_0$ and $\pi_1$ together with a stable quadratic map $k_0$ for the
  symmetry.  This is the content of \cref{thm:strskel_pic}.  However,
  one does not specify a Picard 2-category by simply choosing three
  abelian groups and two group homomorphisms.  This is tantamount to
  specifying a stable 2-type by choosing the bottom Postnikov
  invariant $k_0$ and the composite $k_1 i_1$.
  \Cref{thm:nostrskelmod} shows that such data do not always
  assemble to form a strict Picard 2-category.  For example, the construction
  of \cite[5.2]{Bar14Quasistrict} does not satisfy the axioms of a
  permutative Gray-monoid.
\end{rmk}

\section{Strictification via 2-monads}
\label{sec:strictification}

In this section we develop the 2-monadic tools used in the
proof of \cref{thm:sigmak_equals_ksigma}.  In
\cref{sec:review-2-monad-theory} we recall some basic definitions as well
 as abstract coherence theory from the perspective of 2-monads.  Our focus
  is on various strictification results for algebras and pseudoalgebras 
  over 2-monads, and how strictification can often be expressed as a
   2-adjunction with good properties.  In
\cref{sec:2_apps} we apply this to construct a strictification of
pseudodiagrams as a left 2-adjoint.  The material in
this section is largely standard 2-category theory, but we did not
know a single reference which collected it all in one place.

The formalism of this section aids the proof of
\cref{thm:sigmak_equals_ksigma} in two ways.  
First, it allows us to
produce strict diagrams of 2-categories by
working with diagrams which are weaker (e.g., whose arrows take values in
pseudofunctors) but more straightforward to define.  This occurs in
\cref{sec:constructions-of-K-theory}.  Second, it allows us to
construct strict equivalences of strict diagrams by working instead with pseudonatural
equivalences between them.  This occurs in
\cref{sec:proof-of-sigma-k-eq-k-sigma}.

\subsection{Review of 2-monad theory}\label{sec:review-2-monad-theory}
We recall relevant aspects of 2-monad theory and fix notation.  These
include maps of monads and abstract coherence theory
\cite{KS74Review,power1989coherence,BKP1989Two,Lac02Codescent}.  Let $\cA$ be a
2-category, and $(T \cn \cA \to \cA, \eta, \mu)$ be a 2-monad on
$\cA$.  We then have the following 2-categories of algebras and
morphisms with varying levels of strictness.
\begin{enumerate}
\item $\talgs$ is the 2-category of strict $T$-algebras, strict
  morphisms, and algebra 2-cells.  Its underlying category is just the
  usual category of algebras for the underlying monad of $T$ on the
  underlying category of $\cA$.
\item $\talg$ is the 2-category of strict $T$-algebras,
  pseudo-$T$-morphisms, and algebra 2-cells.
\item $\pstalg$ is the 2-category of pseudo-$T$-algebras,
  pseudo-$T$-morphisms, and algebra 2-cells.
\end{enumerate}
We have inclusions and forgetful functors as below.
\[
\begin{xy}
  0;<25mm,0mm>:<0mm,12mm>:: 
  (0,1)*+{\talgs}="tas";
  (1,1)*+{\talg}="ta";
  (2,1)*+{\pstalg}="pta";
  (1,0)*+{\cA}="a";
  {\ar^-{i} "tas"; "ta"};
  {\ar "ta"; "pta"};
  {\ar_-{U} "tas"; "a"};
  {\ar_-{U} "ta"; "a"};
  {\ar^-{U} "pta"; "a"};
\end{xy}
\]
A map of 2-monads is precisely the data necessary to provide a
2-functor between 2-categories of strict algebras.
\begin{defn}\label{defn:map-of-2-monads}
  Let $S$ be a 2-monad on $\cA$ and $T$ a 2-monad on $\cB$.  A
  \emph{strict map of 2-monads} $S \to T$ consists of a 2-functor
  $F\cn \cA \to \cB$ and a 2-natural transformation $\la \cn TF
  \Rightarrow FS$ satisfying two compatibility axioms \cite{Bec1969Distributive}:
  \begin{align*}
    \la \circ \mu F & = F \mu \circ \la S \circ T \la\\
    \la \circ \eta F & = F \eta.
  \end{align*}
\end{defn}
\begin{prop}\label{prop:map=lift}
  If $F\cn S \to T$ is a strict map of 2-monads, then $F$ lifts to the
  indicated 2-functors in the following diagram.
  \[
  \begin{xy}
    0;<25mm,0mm>:<0mm,12mm>:: 
    (0,0)*+{\cA}="a0";
    (0,1)*+{S\mh\Alg}="a1";
    (0,2)*+{S\mh\Alg_s}="a2";
    (1,0)*+{\cB}="b0";
    (1,1)*+{T\mh\Alg}="b1";
    (1,2)*+{T\mh\Alg_s}="b2";
    {\ar_-{F} "a0"; "b0"};
    {\ar^-{F} "a1"; "b1"};
    {\ar^-{F} "a2"; "b2"};
    {\ar_-{i} "a2"; "a1"};
    {\ar_-{U} "a1"; "a0"};
    {\ar^-{i} "b2"; "b1"};
    {\ar^-{U} "b1"; "b0"};
  \end{xy}
  \]
\end{prop}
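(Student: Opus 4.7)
The plan is to construct the lifted 2-functors explicitly and check the algebra/morphism axioms, using the two axioms of a map of 2-monads at exactly the points one would expect.

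First I would define the lift on objects. Given a strict $S$-algebra $(A, a\cn SA \to A)$, set
\[
F(A,a) \;=\; \bigl(FA,\ Fa \circ \la_{A} \cn TFA \to FA\bigr).
\]
The unit axiom $(Fa \circ \la_A) \circ \eta^{T}_{FA} = \id_{FA}$ follows from $\la \circ \eta F = F\eta$ together with the $S$-algebra unit axiom for $a$. The associativity axiom for the new algebra expands to
\[
(Fa \circ \la_A) \circ \mu^{T}_{FA} \;=\; (Fa \circ \la_A) \circ T(Fa \circ \la_A),
\]
and this follows from $\la \circ \mu F = F\mu \circ \la S \circ T\la$ combined with the $S$-algebra associativity for $a$ and the 2-naturality of $\la$ (used to move $Fa$ past $\la$). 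This is the one calculation where both monad-map axioms are essential.

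Next I would lift the assignment to 1-cells. For a strict morphism $f\cn (A,a) \to (B,b)$, the image $Ff$ is a strict morphism of $T$-algebras because $Ff \circ (Fa \circ \la_A) = Fb \circ \la_B \circ TFf$, using 2-naturality of $\la$ and the functoriality of $F$. For a pseudomorphism $(f,\bar{f})$ with invertible 2-cell $\bar{f}\cn b\circ Sf \Rightarrow f \circ a$, define the pseudomorphism constraint on $Ff$ to be the pasting
\[
\xy
(0,0)*+{TFA}="00";
(22,0)*+{TFB}="10";
(0,-10)*+{FSA}="01";
(22,-10)*+{FSB}="11";
(0,-20)*+{FA}="02";
(22,-20)*+{FB,}="12";
{\ar^-{TFf} "00"; "10"};
{\ar_-{\la_A} "00"; "01"};
{\ar^-{\la_B} "10"; "11"};
{\ar^-{FSf} "01"; "11"};
{\ar_-{Fa} "01"; "02"};
{\ar^-{Fb} "11"; "12"};
{\ar_-{Ff} "02"; "12"};
(11,-5)*{=};
(11,-15)*{\Anglearrow{40} F\bar{f}}
\endxy
\]
i.e.\ $F\bar{f}$ whiskered by $\la$. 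The two coherence axioms for a pseudo-$T$-morphism (with respect to $\eta^T$ and $\mu^T$) follow by pasting the corresponding axioms for $\bar{f}$ as a pseudo-$S$-morphism with the two monad-map axioms applied to $\la$; 2-naturality of $\la$ is what allows the relevant regions to be rewritten.

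Finally, on 2-cells one simply sends an algebra 2-cell $\alpha \cn f \Rightarrow g$ to $F\alpha$; the algebra 2-cell axiom is preserved by pre-whiskering $F\alpha$ with $\la$ and using that $F$ is a 2-functor. 2-functoriality of the lifts on each of $\talgs$ and $\talg$ then reduces to 2-functoriality of $F$, and commutativity of the displayed square with $i$ and $U$ is immediate from the construction (the underlying 1-cell of the lift is $Ff$, and strict morphisms are sent to strict morphisms). The only step with any real content is the associativity verification in the object assignment; everything else is bookkeeping in the 2-category of pasting diagrams.
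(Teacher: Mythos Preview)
Your proof is correct and is exactly the standard explicit verification one carries out for this fact. The paper itself states the proposition without proof, treating it as a routine consequence of the definition of a map of 2-monads (citing \cite{Bec1969Distributive} for the latter), so there is no ``paper proof'' to compare against; your construction of the induced $T$-algebra as $(FA,\ Fa \circ \la_A)$ and the accompanying checks are precisely what one would supply if asked to fill in the details.
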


Abstract coherence theory provides left 2-adjoints to $\talgs
\hookrightarrow \talg$ and the composite $\talgs \hookrightarrow
\pstalg$.  Lack discusses possible hypotheses in \cite[Section
3]{Lac02Codescent}, so we give the following theorem in outline form.

\begin{thm}{{\cite[Section 3]{Lac02Codescent}}}\label{thm:2monad_str}
Under some assumptions on $\cA$ and $T$, the inclusions 
\[
i\cn\talgs \hookrightarrow \talg, \quad j\cn\talgs \hookrightarrow \pstalg
\]
have left 2-adjoints generically denoted $Q$.  Under even further
assumptions, the units $ 1 \Rightarrow iQ, 1 \Rightarrow jQ$ and the
counits $Qi \Rightarrow 1, Qj \Rightarrow 1$ of these 2-adjunctions have components which are internal equivalences in $\talg$ for $Q
\dashv i$ and $\pstalg$ for $Q \dashv j$, respectively. 
\end{thm}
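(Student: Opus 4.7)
The plan is to implement Lack's codescent-based strictification, as developed in \cite{Lac02Codescent}. The 2-monad $T$ generates a free-forgetful 2-adjunction $F \dashv U$ between $\cA$ and $\talgs$. Given a pseudo-$T$-algebra $A$, the 2-monad $T$ together with the pseudo-algebra structure on $A$ yields a 2-truncated coherence datum in $\talgs$, essentially the bar resolution of $A$, whose structural 2-cells encode the pseudo-algebra coherence axioms. I would define $QA$ to be the codescent object of this datum (a specific weighted 2-colimit) in $\talgs$. The defining universal property of codescent objects is arranged precisely so that strict morphisms $QA \to B$ in $\talgs$ correspond naturally to pseudomorphisms $A \to B$ in $\pstalg$; this delivers the 2-adjunction $Q \dashv j$. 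The construction for $Q \dashv i$ is identical, with a strict algebra viewed as a pseudomorphism source in place of a pseudo-algebra. The unit and counit of each adjunction are then produced directly from the universal property of codescent.

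The hypotheses to impose on $\cA$ and $T$ are exactly those that make this construction go through: $\cA$ must admit the relevant codescent objects (for instance, be cocomplete as a 2-category), and $T$ must preserve them well enough that $\talgs$ inherits them and $Q$ lands there. In the classical Blackwell--Kelly--Power setting this reduces to $\cA$ being cocomplete and $T$ being finitary; weaker variants appear in \cite{Lac02Codescent}.

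For the equivalence claim (the ``further assumptions'' clause), I would exploit the flexibility intrinsic to the codescent construction. The coherence datum whose codescent object defines $QA$ carries a canonical splitting arising from the unit $\eta$ of $T$ together with the (pseudo)algebra structure on $A$; this splitting defines a section of the counit $QjA \to A$ in $\pstalg$ (respectively $QiA \to A$ in $\talg$). Standard 2-categorical arguments then upgrade this section to a pseudoinverse, showing that the counit components are internal equivalences in the weaker 2-category; the triangle identities subsequently force the unit components to be equivalences as well.

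The main obstacle is organizational: verifying that the 2-cells extracted from the 2-monad axioms and the pseudo-algebra coherence genuinely satisfy the axioms of a codescent datum, and that the splitting just described really produces the desired equivalences. Both verifications are routine but lengthy diagram chases. The virtue of citing \cite{Lac02Codescent} is that this bookkeeping is carried out once and for all at the level of an abstract 2-monad, so that in our applications we need only verify the hypotheses on $\cA$ and $T$.
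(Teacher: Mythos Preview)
Your proposal is correct and follows the same codescent approach as the cited source \cite{Lac02Codescent}; the paper does not supply its own proof of this theorem but simply invokes Lack's work, so your sketch is in fact more detailed than what appears here. One small point of divergence: you argue that the counit components are equivalences (via the splitting of the bar resolution) and then deduce the same for the units from the triangle identities, whereas the paper's \cref{rmk:counits} records that Lack's proofs establish the unit statement directly and the counit claim is what follows from 2-out-of-3 and a triangle identity. Both directions are valid, but be aware of which one is actually carried out in \cite{Lac02Codescent}.
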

\begin{rmk}\label{rmk:counits}
  The proofs in \cite{Lac02Codescent} only concern the units, but the
  statement about counits follows immediately from the 2-out-of-3
  property for equivalences and one of the triangle identities.  We 
  note that the components of the counits are actually always 1-cells 
  in $\talgs$, so saying they are equivalences in $\talg$ or $\pstalg$ 
  requires implicitly applying $i$ or $j$, respectively.
\end{rmk}

\begin{notn}\label{notn:i-and-j}
  We will always denote inclusions of the form $\talgs \hookrightarrow
  \talg$ by $i$, and inclusions of the form $\talgs \hookrightarrow
  \pstalg$ by $j$.  If we need to distinguish between the left
  adjoints for $i$ and $j$, we will denote them $Q_i$ and $Q_j$,
  respectively.
\end{notn}

\subsection{Two applications of 2-monads}\label{sec:2_apps}

We are interested in two applications of \cref{thm:2monad_str}: one
which gives 2-categories as the strict algebras
(\cref{prop:st-left-adj}), and one which gives 2-functors with fixed
domain and codomain as the strict algebras
(\cref{prop:functor_2monadic}).  Combining these in \cref{cor:J_Q_adj}
we obtain the main strictification result used in our analysis of $K$-theory 
and suspension in \cref{sec:suspension-models-suspension}.

We begin with the 2-monad for 2-categories and refer the interested
reader to \cite{Lac10Icons} and \cite{LP20082nerves} for further
details.

\begin{defn}\label{defn:graphs}
\ 
\begin{enumerate}
\item A \emph{category-enriched graph} or \emph{$\Cat$-graph} $\big( S,
  S(x,y) \big)$ consists of a set of objects $S$ and for each pair of
  objects $x,y \in S$, a category $S(x,y)$.
\item A \emph{map of $\Cat$-graphs} $(F,F_{x,y})\cn \big( S, S(x,y) \big)
  \to \big( T, T(w,z) \big)$ consists of a function $F\cn S \to T$ and
  a functor $F_{x,y} \cn S(x,y) \to T(Fx, Fy)$ for each pair of
  objects $x,y \in S$.
\item A \emph{$\Cat$-graph 2-cell} $\al \cn (F,F_{x,y}) \Rightarrow
  (G,G_{x,y})$ only exists when $F=G$ as functions $S \to T$, and then
  consists of a natural transformation $\al_{x,y} \cn F_{x,y}
  \Rightarrow G_{x,y}$ for each pair of objects $x,y \in S$.
\end{enumerate}
\end{defn}

\begin{notn}\label{defn:catgrph}
  $\Cat$-graphs, their maps, and 2-cells form a 2-category, $\catgrph$,
  with the obvious composition and unit structures.
\end{notn}

\begin{defn}\label{defn:icon}
  Let $\cA, \cB$ be 2-categories, and $F,G \cn \cA \to \cB$ be a pair
  of 2-functors between them.  An \emph{icon} $\al \cn F \Rightarrow
  G$ exists only when $Fa = Ga$ for all objects $a \in \cA$, and then
  consists of natural transformations
  \[
  \alpha_{a,b}:F_{a,b} \Rightarrow G_{a,b}:\cA(a,b) \rightarrow \cB(Fa,Fb)
  \]
  for all pairs of objects $a$, $b$, such that the following diagrams
  commute.  (Note that we suppress the 0-cell source and target
  subscripts for components of the transformations $\alpha_{a,b}$ and
  instead only list the 1-cell for which a given 2-cell is the
  component.)
  \[
  \xy
  {\ar^{=} (0,0)*+{\id_{Fa}}; (25,0)*+{F\id_{a}} };
  {\ar^{\alpha_{\id}} (25,0)*+{F\id_{a}}; (25,-15)*+{G\id_{a}} };
  {\ar_{=} (0,0)*+{\id_{Fa}}; (25,-15)*+{G\id_{a}} };
  {\ar^{=} (50,0)*+{Ff\circ Fg}; (80,0)*+{F(f\circ g)} };
  {\ar^{\alpha_{f\circ g}} (80,0)*+{F(f\circ g)}; (80,-15)*+{G(f\circ g)} };
  {\ar_{\alpha_{f}*\alpha_{g}} (50,0)*+{Ff\circ Fg}; (50,-15)*+{Gf\circ Gg} };
  {\ar_{=} (50,-15)*+{Gf\circ Gg}; (80,-15)*+{G(f\circ g)} };
  \endxy
  \]
\end{defn}

\begin{rmk}
  We can define icons between pseudofunctors or lax functors with only
  minor modifications, replacing some equalities above with the
  appropriate coherence cell; see \cite{LP20082nerves,Lac10Icons}.
\end{rmk}

\begin{notn}\label{notn:2cat2i}
  2-categories, 2-functors, and icons form a 2-category which we
  denote $\twocattwoi$.  2-categories, pseudofunctors, and icons form
  a 2-category which we denote $\twocatpi$. Bicategories,
  pseudofunctors, and icons also form a 2-category which we denote
  $\bicat_{\text{p,i}}$.
\end{notn}

Recall that a 2-functor $U \cn \cA \to \cK$ is 2-monadic if it has a
left 2-adjoint $F$ and $\cA$ is 2-equivalent to the 2-category of
algebras $(UF)\mh\mb{Alg}_{s}$ via the canonical comparison map.
\begin{prop}[{\cite{LP20082nerves,Lac10Icons}}]\label{prop:2cat2i_2monadic}
  The 2-functor $\twocattwoi \to \catgrph$ is 2-monadic, and
  the left 2-adjoint is given by the $\cat$-enriched version of the
  free category functor.
\end{prop}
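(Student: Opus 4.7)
The plan is to construct the 2-monad $T$ on $\catgrph$ whose underlying endo-2-functor is the $\cat$-enriched free category construction, and then exhibit an isomorphism of 2-categories between $\twocattwoi$ and $\talgs$ over $\catgrph$; the claimed 2-monadicity and the identification of the left 2-adjoint will both follow from this isomorphism together with the free-forgetful 2-adjunction for $T$-algebras.

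First, I would define $T$ explicitly. For a Cat-graph $\bigl(S, S(x,y)\bigr)$, the underlying object set of $T\bigl(S, S(x,y)\bigr)$ is again $S$, and
\[
T(S)(x,y) \;=\; \coprod_{n \geq 0} \ \coprod_{x = x_0, \ldots, x_n = y} \ \prod_{i=0}^{n-1} S(x_i, x_{i+1}),
\]
the $\cat$-indexed coproduct over all finite composable paths from $x$ to $y$. The unit $\eta \cn \id \Rightarrow T$ includes the length-one paths and the multiplication $\mu \cn T^2 \Rightarrow T$ concatenates nested paths. These satisfy the 2-monad axioms by the same argument used for the ordinary free category monad on quivers, now enriched over $\cat$.

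Second, I would identify strict $T$-algebras with 2-categories. A $T$-algebra structure map $T(S) \to S$ unpacks, via the coproduct-of-products formula for $T$, as a coherent associative and unital choice of composition functors $S(y,z) \times S(x,y) \to S(x,z)$ together with identity 1-cells---that is, precisely a 2-category structure on $(S, S(x,y))$. A strict morphism of $T$-algebras preserves these composites and identities on the nose, and so is exactly a 2-functor. For 2-cells, the critical observation is that a 2-cell in $\catgrph$ between maps $F, G$ exists only when $F$ and $G$ agree on objects, so an algebra 2-cell between 2-functors $F, G \cn \cA \to \cB$ is a family of natural transformations $\alpha_{a,b} \cn F_{a,b} \Rightarrow G_{a,b}$; the algebra 2-cell axiom then splits, along the coproduct-of-products decomposition of $T$, into precisely the unit and composition compatibility conditions of \cref{defn:icon}. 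Thus algebra 2-cells are exactly icons.

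This gives a canonical isomorphism of 2-categories $\twocattwoi \cong \talgs$ over $\catgrph$; transporting the free-forgetful 2-adjunction along this isomorphism shows both that the left 2-adjoint of $U$ is the $\cat$-enriched free category functor and that the canonical comparison $\twocattwoi \to \talgs$ is an isomorphism, establishing 2-monadicity. The main obstacle I expect is the term-by-term verification matching algebra 2-cell axioms with icon axioms: the single naturality square involving $T\alpha$ must be shown to be equivalent, via the explicit formula for $T$, to the commutativity of the two diagrams in \cref{defn:icon}. This is a direct but careful calculation, made possible by the fact that $T$ is built freely from length-zero (unit) and length-two (composition) data, which are exactly the components regulated by the icon axioms.
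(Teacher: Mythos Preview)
Your sketch is correct in outline and in its key insight: the identification of algebra 2-cells with icons is exactly the point, and your observation that the single $T$-algebra 2-cell axiom decomposes along the length-zero and length-two summands of $T$ into the two icon axioms is the heart of the matter.

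However, note that the paper itself does not supply a proof of this proposition at all; it is stated with citations to Lack--Paoli and Lack's paper on icons, where the result originates. So there is no ``paper's own proof'' to compare against. Your argument is essentially a compressed version of what appears in those references, and as such is appropriate. The one point where you should be slightly more careful is in the claim that $T$-algebra structures correspond bijectively to 2-category structures: a $T$-algebra gives composition functors for \emph{all} lengths simultaneously, and the algebra axioms (unitality and associativity with respect to $\mu$) are what force these to be determined by, and coherent with, the binary composition and identities. This is routine but worth flagging, since it is the enriched analogue of the fact that algebras for the free-monoid monad are monoids rather than arbitrary operations indexed by $\mathbb{N}$.
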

The following is our first application of \cref{thm:2monad_str}.
\begin{prop}\label{prop:st-left-adj}
  The two inclusions, 
  \[
  i \cn \twocattwoi \hookrightarrow \twocatpi, \quad
  j \cn \twocattwoi \hookrightarrow \bicat_{\text{p,i}} 
  \]
  have left 2-adjoints, and the components of the units and counits of
  both adjunctions are internal equivalences in $\twocatpi$ for $Q_i
  \dashv i$ and $\bicat_{\text{p,i}}$ for $Q_j \dashv j$,
  respectively.
\end{prop}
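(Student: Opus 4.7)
The plan is to obtain both adjunctions as instances of the abstract 2-monadic strictification machine recalled in \cref{thm:2monad_str}. By \cref{prop:2cat2i_2monadic} there is a 2-monad $T$ on $\catgrph$ whose 2-category $T\mh\Alg_s$ of strict algebras, strict morphisms, and algebra 2-cells is isomorphic to $\twocattwoi$. So the entire proposition will follow once we identify
\[
T\mh\Alg \;\cong\; \twocatpi \qquad\text{and}\qquad \pstalg \;\cong\; \bicat_{\text{p,i}},
\]
and then check that $T$ satisfies the hypotheses of \cref{thm:2monad_str}.

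For the first identification, I would simply unwind definitions. A pseudo-$T$-morphism between two strict $T$-algebras is a map $F$ of underlying $\Cat$-graphs together with an invertible 2-cell filling the square comparing $F$ applied to a composite with the composite of $F$ applied to the factors, plus an analogous invertible cell for units, all subject to Bec\'k-style coherence axioms; spelling this out on the nose gives exactly the data of a pseudofunctor of 2-categories, with the coherence axioms matching the pentagon and triangle for the compositor and unitor. An algebra 2-cell between two such pseudo-$T$-morphisms $F,G$ exists only when $F=G$ on underlying $\Cat$-graphs, and consists of a $\Cat$-graph 2-cell compatible with the algebra structure cells; unwinding compatibility recovers the axioms of an icon from \cref{defn:icon}. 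For the second identification, a pseudo-$T$-algebra replaces the strict associativity $\mu\circ T\mu = \mu \circ \mu T$ and unit $\mu \circ \eta T = \id = \mu \circ T\eta$ by coherent invertible 2-cells, which translate under the specific $T$ into an associator and left/right unitors on the underlying $\Cat$-graph satisfying pentagon and triangle — that is, a bicategory. Pseudo-$T$-morphisms and algebra 2-cells between pseudo-$T$-algebras then correspond to pseudofunctors and icons between bicategories by the same analysis as before.

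With these identifications the inclusions $i$ and $j$ in the statement become the canonical inclusions $T\mh\Alg_s \hookrightarrow T\mh\Alg$ and $T\mh\Alg_s \hookrightarrow \pstalg$, so \cref{thm:2monad_str} produces both left 2-adjoints $Q_i$ and $Q_j$ and furnishes the claimed equivalences on units and counits (using \cref{rmk:counits} for the counits). To invoke \cref{thm:2monad_str} legitimately one must check Lack's hypotheses, namely that $\catgrph$ is suitably cocomplete and that $T$ is finitary (or at least accessible) and preserves enough bicolimits. This is standard: $\catgrph$ is a $\Cat$-enriched presheaf 2-category over the discrete base of sets-of-objects, hence complete and cocomplete, and $T$, being the $\Cat$-enriched free category monad applied hom-wise, is finitary.

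The only real obstacle is the bookkeeping for the two identifications: one has to match the coherence 2-cells of a pseudo-$T$-morphism with the compositor and unitor of a pseudofunctor, and match the relaxed associativity and unit constraints of a pseudo-$T$-algebra with the associator and unitors of a bicategory, in each case verifying that the respective coherence axioms transport correctly. Once this unwinding is done, the adjunctions and the equivalence property of units and counits are immediate consequences of the cited 2-monadic result.
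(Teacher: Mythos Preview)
Your approach is essentially the same as the paper's: both invoke the 2-monad on $\catgrph$ from \cref{prop:2cat2i_2monadic}, identify $\talg$ with $\twocatpi$ and $\pstalg$ with $\bicat_{\text{p,i}}$, and then apply \cref{thm:2monad_str} using finitariness of $T$. The paper adds two small concrete pieces you omit: it identifies the resulting left adjoint explicitly as the standard strictification functor $\mathrm{st}$ (formal strings of composable 1-cells), and rather than relying solely on the ``even further assumptions'' clause of \cref{thm:2monad_str} for the units, it observes directly that internal equivalences in $\talg$ and $\pstalg$ are bijective-on-objects biequivalences and that the unit $X \to \mathrm{st}(X)$ is manifestly such.
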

\begin{proof}
  The induced monad $T$ on $\catgrph$ satisfies a version of the
  hypotheses for \cref{thm:2monad_str} (for example, it is a finitary
  monad) so we get left 2-adjoints to both inclusions
  \[
  i \cn \talgs \to \talg, \quad j \cn \talgs \to \pstalg.
  \]
  Now $\talg$ can be identified with $\twocatpi$, and one can check
  that $\pstalg$ can be identified with $\bicat_{\text{p,i}}$, and
  using these the two left 2-adjoints above are both given by the
  standard functorial strictification functor, often denoted
  $\textrm{st}$ (see \cite{JS1993btc} for the version with only a
  single object, i.e., monoidal categories).  The objects of
  $\textrm{st}(X)$ are the same as $X$, while the 1-cells are formal
  strings of composable 1-cells (including the empty string at each
  object).  Internal equivalences in either $\talg$ or $\pstalg$ for
  the 2-monad $T$ are bijective-on-objects biequivalences, and it is
  easy to check that the unit is such; see
  \cite{LP20082nerves,Gur2013monoidal} for further details.
\end{proof}

\begin{rmk}\label{rmk:2cat2i_cocomp}
  We should note that $\twocattwoi$ is complete and cocomplete as a 2-category,
  since it is the 2-category of algebras for a finitary 2-monad on a complete and 
  cocomplete 2-category.  This will be necessary for later
  constructions.  On the other hand, $\twocatpi$ is not cocomplete as
  a 2-category, but is as a bicategory: coequalizers of pseudofunctors
  rarely exist in the strict, 2-categorical sense, but all
  bicategorical colimits do exist.
\end{rmk}

Our second application of \cref{thm:2monad_str} deals with functor
2-categories.  Here we fix a small 2-category $\cA$ and a complete and
cocomplete 2-category $\cK$.

\begin{notn}\label{notn:functor2cats}
  Let $[\cA, \cK]$ denote the 2-category of 2-functors, 2-natural
  transformations, and modifications from $\cA$ to $\cK$.  Let
  $\bicat(\cA, \cK)$ denote the 2-category of pseudofunctors,
  pseudonatural transformations, and modifications from $\cA$ to
  $\cK$.  Let $\gray(\cA, \cK)$ denote the 2-category of 2-functors,
  pseudonatural transformations, and modifications from $\cA$ to
  $\cK$.  This is the internal hom-object corresponding to the Gray
  tensor product on $\iicat$ \cite{GPS95Coherence}.
\end{notn}

\begin{rmk}
$\bicat(\cA, \cK)$ inherits its compositional and unit structures from 
the target 2-category $\cK$ and is therefore a 2-category rather than a
bicategory even though all of its cells are of the weaker, bicategorical variety.
\end{rmk}

Let $\textrm{ob} \, \cA$ denote the discrete 2-category with the
same set of objects as $\cA$.  We have an inclusion $\textrm{ob} \,
\cA \hookrightarrow \cA$ which induces a 2-functor $U \cn [\cA,
\cK] \to [\textrm{ob} \, \cA, \cK]$.  
\begin{prop}\label{prop:functor_2monadic}
  The forgetful 2-functor $U\cn[\cA, \cK] \to [\ob\, \cA, \cK]$ is
  2-monadic, and the left 2-adjoint is given by enriched left Kan
  extension.  The induced 2-monad preserves all colimits, and so the
  inclusions
 \[
  i \cn [\cA, \cK]  \hookrightarrow  \gray (\cA, \cK), \quad
  j \cn [\cA, \cK]  \hookrightarrow  \bicat(\cA, \cK)
 \] 
 have left 2-adjoints.  The units and counits of these adjunctions
 have components which are internal equivalences in $\gray(\cA, \cK)$
 for $Q_i \dashv i$ and $\bicat(\cA, \cK)$ for $Q_j \dashv j$,
 respectively.
\end{prop}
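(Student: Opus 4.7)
The plan is to reduce the statement to a direct application of \cref{thm:2monad_str}. I would begin by exhibiting the forgetful 2-functor $U\cn [\cA,\cK]\to[\ob\,\cA,\cK]$ as 2-monadic. Since $\cA$ is small and $\cK$ is complete and cocomplete, the inclusion $\ob\,\cA\hookrightarrow\cA$ of small $\cat$-enriched categories gives a restriction 2-functor $U$ with a left 2-adjoint $\mathrm{Lan}$ computed by enriched left Kan extension. Because $U$ preserves and reflects pointwise limits, the enriched version of Beck's monadicity theorem applies and $[\cA,\cK]$ is identified with the 2-category of strict algebras for the induced 2-monad $T=U\circ\mathrm{Lan}$; this yields the first two sentences of the proposition.

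Next I would compute $T$ explicitly in order to verify the colimit-preservation hypothesis of \cref{thm:2monad_str}. Because $\ob\,\cA$ is discrete, the pointwise coend defining $\mathrm{Lan}$ collapses to a copower, yielding
\[
(TX)(a)\;\cong\;\coprod_{b\in\ob\,\cA}\cA(b,a)\cdot X(b),
\]
where $\cdot$ denotes the $\cat$-copower in $\cK$. Colimits in $[\ob\,\cA,\cK]$ are computed pointwise, and both coproducts and copowers commute with colimits in each variable in the cocomplete 2-category $\cK$, so $T$ preserves all colimits. This is more than enough to satisfy the hypotheses of \cref{thm:2monad_str}, which thereby yields left 2-adjoints to $i\cn\talgs\hookrightarrow\talg$ and $j\cn\talgs\hookrightarrow\pstalg$ whose units are componentwise internal equivalences in the respective target 2-categories; the corresponding statement for counits is \cref{rmk:counits}.

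The remaining task is to identify $\talg$ with $\gray(\cA,\cK)$ and $\pstalg$ with $\bicat(\cA,\cK)$. This is the step I expect to be the main bookkeeping obstacle, although it is entirely formal. The data of a $T$-algebra structure on an object $X\in[\ob\,\cA,\cK]$ correspond precisely to a lift of $X$ to a 2-functor $\cA\to\cK$, the action of $T$ on higher cells encoding composition in $\cA$. Unwinding the pseudo-$T$-morphism axioms between two strict algebras recovers the coherence data of a pseudonatural transformation between the associated 2-functors, giving $\talg\cong\gray(\cA,\cK)$. An analogous but slightly more involved comparison matches pseudo-$T$-algebras with pseudofunctors, and pseudo-$T$-morphisms of pseudo-algebras with pseudonatural transformations, yielding $\pstalg\cong\bicat(\cA,\cK)$. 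Under these identifications the conclusions of \cref{thm:2monad_str} specialize to the statements of the proposition.
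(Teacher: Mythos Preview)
Your proposal is correct and follows the same overall structure as the paper: establish 2-monadicity of $U$ via enriched left Kan extension, verify that the induced 2-monad $T$ preserves colimits so that \cref{thm:2monad_str} applies, and then identify $\talg$ and $\pstalg$ with $\gray(\cA,\cK)$ and $\bicat(\cA,\cK)$ respectively (the paper cites \cite{Lac10Companion} for this last identification rather than spelling it out).

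The one point of genuine difference is the verification that $T$ preserves colimits. You compute $T$ explicitly as a coproduct of copowers and argue that these operations commute with colimits in $\cK$. The paper instead observes that, since $\cK$ is complete, $U$ also has a \emph{right} 2-adjoint given by enriched right Kan extension; hence $T = U \circ \mathrm{Lan}$ is a composite of two left 2-adjoints and automatically preserves all colimits. The paper's argument is slicker and avoids any computation, while yours has the virtue of making the shape of $T$ explicit. Both are perfectly valid.
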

\begin{proof}
  That $U$ is 2-monadic follows because it has a left 2-adjoint given
  by enriched left Kan extension and is furthermore conservative.
  Thus $[\cA, \cK]$ is 2-equivalent to the 2-category of strict
  algebras for $U \circ \, \textrm{Lan}$.  The 2-functor $U$ also has
  a right adjoint given by right Kan extension since $\cK$ is
  complete, so $U \circ \, \textrm{Lan}$ preserves all colimits as it
  is a composite of two left 2-adjoints.  The 2-category $[\textrm{ob}
  \, \cA, \cK]$ is cocomplete since $\cK$ is, hence $T=U \circ \,
  \textrm{Lan}$ satisfies the strongest version of the hypotheses for
  \cref{thm:2monad_str}.  One can check that $\talg$ is 2-equivalent
  to $\gray(\cA, \cK)$ and $\pstalg$ is 2-equivalent to $\bicat(\cA,
  \cK)$ \cite{Lac10Companion}.  This proves that the inclusions $i,j$
  in the statement have left 2-adjoints.  The version of
  \cref{thm:2monad_str} which applies in this case proves, moreover,
  that the components of the units are internal equivalences in
  $\gray(\cA, \cK)$ and $\bicat(\cA, \cK)$, respectively, and hence
  the claim about counits follows (see \cref{rmk:counits}).
\end{proof}

We require one further lemma before stating the main result of this section.
\begin{lem}\label{lem:hom_is_2fun}
  For a fixed 2-category $\cA$, $\bicat(\cA, -)$ is an endo-2-functor
  of the 2-category of 2-categories, 2-functors, and 2-natural
  transformations.
\end{lem}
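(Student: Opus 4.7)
The plan is to verify the three levels of 2-functoriality in turn: on 0-cells (sending 2-categories to 2-categories), on 1-cells (sending 2-functors to 2-functors between the corresponding pseudofunctor 2-categories), and on 2-cells (sending 2-natural transformations to 2-natural transformations). The crux is that although objects of $\bicat(\cA,\cK)$ and $\bicat(\cA,\cK')$ are weak (pseudofunctors, pseudonatural transformations), post-composition with a strict 2-functor preserves all the relevant coherence data on the nose, so nothing weakens as we pass across the assignment.

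First, $\bicat(\cA,\cK)$ is a 2-category by its definition in \cref{notn:functor2cats}, so the 0-cell assignment is immediate. For a 2-functor $F\cn\cK\to\cK'$, I would define $F_*\cn \bicat(\cA,\cK)\to\bicat(\cA,\cK')$ by post-composition: a pseudofunctor $H\cn\cA\to\cK$ is sent to $F\circ H$ with structure 2-cells obtained by applying $F$ to those of $H$; a pseudonatural transformation $\beta\cn H\Rightarrow H'$ is sent to the pseudonatural transformation whose components are $F\beta_a$ and whose naturality 2-cells are $F$ applied to those of $\beta$; and a modification is sent to the whiskered modification. Because $F$ preserves composition and identities of 1- and 2-cells strictly, the pseudofunctor axioms, pseudonaturality axioms, and modification axioms all transfer directly. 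One checks that $F_*$ preserves identity pseudofunctors, strict composition of pseudonatural transformations (both vertical and horizontal), and identity modifications, i.e., that $F_*$ is itself a strict 2-functor.

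Next, for a 2-natural transformation $\alpha\cn F\Rightarrow G$ between 2-functors $\cK\to\cK'$, I would define $\alpha_*\cn F_*\Rightarrow G_*$ whose component at a pseudofunctor $H\cn\cA\to\cK$ is the pseudonatural transformation $\alpha H\cn F\circ H\Rightarrow G\circ H$ with components $\alpha_{Ha}$ and with naturality 2-cells built from the naturality squares of $\alpha$ (which are identities since $\alpha$ is 2-natural and $H$ lands in $\cK$) together with $F$ and $G$ applied to the structure of $H$. The 2-naturality of $\alpha_*$ in the variable $H$ reduces to the 2-naturality of $\alpha$ applied componentwise, along with the fact that the modification axiom for the naturality squares of $\alpha_*$ follows from the interchange law in $\cK'$.

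Finally, one verifies that this assignment preserves identities and vertical and horizontal composition of 2-natural transformations. Each of these checks amounts to a pointwise equality of pseudonatural transformations which in turn reduces to the corresponding equality for $\alpha$ in $\cK'$, and so follows by definition of the 2-category structure on 2-categories, 2-functors, and 2-natural transformations. The main obstacle is not conceptual but bookkeeping: one must keep careful track of which structure 2-cells are identities (because of strictness of $F$, $G$, $\alpha$) and which are genuine coherence data (coming from pseudofunctors and pseudonatural transformations in $\bicat(\cA,\cK)$), but once this is done, each axiom reduces to a strict equality obtained by applying $F$ or $G$ to an axiom that already holds in $\bicat(\cA,\cK)$.
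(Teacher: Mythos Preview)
Your proposal is correct and follows essentially the same approach as the paper: both define $F_*$ and $\sigma_*$ by post-composition/whiskering and identify the 2-naturality of $\sigma_*$ as the only nontrivial check, reducing it to the 2-naturality of $\sigma$ applied componentwise. The paper is slightly more explicit at that key step, writing out the equation $G\alpha \circ \sigma H = \sigma K \circ F\alpha$ and noting that equality of the pseudonaturality isomorphisms requires a short pasting argument using the 2-dimensional part of the 2-naturality of $\sigma$; your phrase ``reduces to the 2-naturality of $\alpha$ applied componentwise, along with \ldots the interchange law'' is correct in spirit but elides that this second ingredient is precisely where 2-naturality (as opposed to mere ordinary naturality) of $\alpha$ is used.
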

\begin{proof}
  For any 2-category $\cB$, we know that $\bicat(\cA, \cB)$ is a
  2-category.  Furthermore, if $F \cn \cB \to \cC$ is a 2-functor, it
  is straightforward to check that $F_{*} \cn \bicat(\cA, \cB) \to
  \bicat(\cA, \cC)$ is also a 2-functor.  The only interesting detail
  to check is on the level of 2-cells where we must show that if $\si
  \cn F \Rightarrow G$ is 2-natural, then so is $\si_{*}$.  The
  component of $\si_{*}$ at $H \cn \cA \to \cB$ is the pseudonatural
  transformation $\si H \cn FH \Rightarrow GH$ with $(\si H)_{a} =
  \si_{Ha}$ and similarly for pseudonaturality isomorphisms.  We must
  verify that $\si_{*}$ is 2-natural in $H$.  Thus for any $\al \cn H
  \Rightarrow K$, we must check that $G \al \circ \si H = \si K \circ
  F \al$ as pseudonatural transformations and then similarly for
  modifications.  At an object $a$, we have components
  \[
  (G \al \circ \si H)_{a} 
  = G(\al_{a}) \circ \si_{Ha} 
  = \si_{Ka} \circ F(\al_{a}) 
  = (\si K \circ F \al)_{a}
  \]
  by the 2-naturality of $\si$ in $Ha$.  A short and simple pasting
  diagram argument that we leave to the reader also shows that the
  pseudonaturality isomorphisms for $G \al \circ \si H$ and $\si K
  \circ F \al$ are the same, once again relying on the 2-naturality of
  $\si$ in its argument.  This completes the 1-dimensional part of
  2-naturality, and the 2-dimensional part is a direct consequence of
  the 2-naturality of $\si$ when written out on components.
\end{proof}

\begin{rmk}
  While the argument above is simple, it is not entirely formal.  The
  ``dual'' version for $\bicat(-, \cA)$ does not hold 
  due to an asymmetry in the definition of the pseudonaturality
  isomorphisms for a horizontal composite of pseudonatural
  transformations.
\end{rmk}

We are now ready to prove the main result of this section, namely that
we can replace pseudofunctors $\cA \to \twocatpi$ with equivalent
2-functors $\cA \to \twocattwoi$.
\begin{thm}\label{cor:J_Q_adj}
  The inclusion $J:[\cA, \twocattwoi] \hookrightarrow \bicat(\cA,
  \twocatpi)$ has a left 2-adjoint $Q$.  The unit and counit of this
  adjunction have components which are internal equivalences in
  $\bicat(\cA, \twocatpi)$.
\end{thm}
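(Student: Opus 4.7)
The plan is to factor $J$ as a composite of two simpler inclusions, each having a known left 2-adjoint with equivalence unit and counit components, and then assemble the desired $Q$, together with its equivalence unit and counit, from those of the factors.

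First, I would factor $J$ as
\[
[\cA, \twocattwoi] \xrightarrow{\,j\,} \bicat(\cA, \twocattwoi) \xrightarrow{\,i_*\,} \bicat(\cA, \twocatpi),
\]
where $j$ is the inclusion from \cref{prop:functor_2monadic} (applied with $\cK = \twocattwoi$) and $i_*$ is obtained by applying the endo-2-functor $\bicat(\cA,-)$ from \cref{lem:hom_is_2fun} to the inclusion $i\colon \twocattwoi \hookrightarrow \twocatpi$ of \cref{prop:st-left-adj}. That this genuinely recovers $J$ is immediate by inspection: an object $F \in [\cA, \twocattwoi]$ is viewed as a pseudofunctor $\cA \to \twocatpi$ precisely by post-composing with $i$, which is exactly the effect of $i_* \circ j$ on objects, and similarly on higher cells.

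Next, I would produce a left 2-adjoint for each factor. The inclusion $j$ has a left 2-adjoint $Q_j$ directly by \cref{prop:functor_2monadic}, and its unit and counit have components which are equivalences in $\bicat(\cA, \twocattwoi)$. For $i_*$, since $Q_i \dashv i$ is already known to be a 2-adjunction by \cref{prop:st-left-adj} and $\bicat(\cA,-)$ is a 2-functor on the 2-category of 2-categories, 2-functors, and 2-natural transformations, applying it preserves the adjunction and yields $(Q_i)_* \dashv i_*$. The composite
\[
Q := Q_j \circ (Q_i)_* \colon \bicat(\cA,\twocatpi) \longrightarrow [\cA, \twocattwoi]
\]
is then left 2-adjoint to $J = i_* \circ j$ by the standard composition of 2-adjunctions.

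Finally, to handle the unit and counit, recall that the unit of a composite adjunction is built by whiskering the unit of the outer adjunction with the inner functors and then composing with the unit of the inner adjunction (dually for counits). Since each factor adjunction already has equivalence components, and any 2-functor, including $i_*$ and $(Q_i)_*$, preserves internal equivalences, the whiskered transformations still have equivalence components; composites of equivalences remain equivalences, so the unit and counit of $Q \dashv J$ have equivalence components in $\bicat(\cA, \twocatpi)$.

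The single point that requires genuine care, and the main obstacle I anticipate, is verifying that $\bicat(\cA,-)$ really does turn the 2-adjunction $Q_i \dashv i$ into the 2-adjunction $(Q_i)_* \dashv i_*$ with equivalence components pointwise, rather than, say, only after further rectification. This reduces to the unpacking that at a pseudofunctor $H\colon \cA \to \twocatpi$ the component of the whiskered unit is the pseudonatural transformation with 1-cell components $(\eta^{Q_i})_{H(a)}$, which are equivalences in $\twocatpi$ by \cref{prop:st-left-adj}, and that any pseudonatural transformation with componentwise equivalences is itself an equivalence in the pseudofunctor 2-category. Once this is in hand, the theorem follows formally.
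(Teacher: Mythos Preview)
Your proposal is correct and follows essentially the same argument as the paper: factor $J$ as $i_* \circ j$, invoke \cref{prop:functor_2monadic} (using that $\twocattwoi$ is complete and cocomplete) for $Q_j \dashv j$, use \cref{lem:hom_is_2fun} to transport $Q_i \dashv i$ to $(Q_i)_* \dashv i_*$, and compose. Your added care about why componentwise equivalences yield an internal equivalence in the pseudofunctor 2-category is a welcome elaboration, but the strategy and the ingredients are the same as the paper's.
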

\begin{proof}
  We will combine \cref{prop:st-left-adj,prop:functor_2monadic}.
  The inclusion $J$ factors into the two inclusions
  \[
  [\cA, \twocattwoi] \stackrel{j}{\hookrightarrow} 
  \bicat(\cA, \twocattwoi) \stackrel{i_*}{\hookrightarrow} 
  \bicat(\cA, \twocatpi).
  \]
  Since $\twocattwoi$ is cocomplete, $j$ has a left 2-adjoint $Q_j$ by
  \cref{prop:functor_2monadic}.  The inclusion $i$ has a left
  2-adjoint $Q_i$ by \cref{prop:st-left-adj}, so $i_*$ has a left
  2-adjoint $(Q_i)_{*}$ by \cref{lem:hom_is_2fun}.  Both of these
  2-adjunctions have units whose components are equivalences, so the
  composite $Q = Q_j(Q_i)_*$ does as well, from which the claim about
  counits follows.
\end{proof}

\section{Categorical suspension models stable suspension}
\label{sec:suspension-models-suspension}

The purpose of this section is to prove
\cref{thm:sigmak_equals_ksigma}, which states that $K$-theory commutes
with suspension, in the appropriate sense. More precisely, we show
that for any permutative category $C$, the $K$-theory spectrum of the
one-object permutative Gray-monoid $\Si C$ is stably equivalent to the
suspension of the $K$-theory spectrum of $C$.

This entails
a comparison between constructions of $K$-theory for categories and
2-categories. Both constructions use the theory of $\Ga$-spaces
developed by Segal \cite{Seg74Categories}.  We recall this theory in \cref{sec:constructions-of-K-theory}.  
Our interest in
$\Ga$-spaces arises from the fact that they model the homotopy theory
of connective spectra, as developed by Bousfield and Friedlander
\cite{BF1978} in the simplicial setting. Thus, in what follows, we
will work with $\Ga$-simplicial sets to prove
\cref{thm:sigmak_equals_ksigma}.

We model the spectra $K(\Si C)$ and $\Si KC$ with $\Ga$-simplicial
sets which are constructed from certain $\Ga$-objects in simplicial
categories.  These $\Ga$-objects in simplicial categories are two
different strictifications of the same pseudofunctor $\sF \to
\Bicat(\De^{\op},\cat_2)$, where $\sF$ is the category of finite
pointed sets and pointed maps.  The first of these strictifications is
provided in \cref{defn:ktheory} by applying the suspension of $\Ga$-simplicial sets
(\cref{defn:suspofgamma}) to a strictification of the pseudofunctor $n \mapsto
C^n$ (\cref{constr:cn}), giving a model for $\Si KC$.  The second is
provided in \cref{defn:naivek} and gives a model for $K(\SI C)$.

In \cref{sec:proof-of-sigma-k-eq-k-sigma} we use the formalism of
\cref{sec:strictification} to compare the two strictifications via a
zigzag of levelwise equivalences.  The key step in this comparison is
constructed in \cref{thm:sigmak_equals_ksigma_full} by strictification of a pseudonatural
equivalence.

\subsection{Constructions of \texorpdfstring{$K$}{K}-theory spectra and suspension}\label{sec:constructions-of-K-theory}

Let $\sF$ denote the following skeletal model for the category of
finite pointed sets and pointed maps. An object of $\sF$ is determined
by an integer $m\geq 0$, which represents the pointed set
$\ul{m}_+=\{0,1,\dots,m\}$, where 0 is the basepoint. This category is
isomorphic to the opposite of the category $\Gamma$ defined by Segal
\cite{Seg74Categories}.

\begin{defn}\label{defn:gammathings}
  Let $\cC$ be a category with a terminal object $\ast$. A
  \emph{$\Gamma$-object} in $\cC$ is a functor $X\cn \sF \to \cC$ such
  that $X(\ulp{0})=\ast$.
\end{defn}  

We give the above definition in full generality, but are only
interested in the cases when $\cC$ is one of $\cat$,
$\iicat$, the category of simplicial sets $\sSet$ or of topological spaces $\Top$.  In each of these cases, we have finite products and a
notion of weak equivalence.  In $\Top$ and $\sSet$ this is the
classical notion of weak homotopy equivalence, and in both $\cat$ and
$\iicat$ we define a functor or 2-functor to be a weak equivalence if
it induces a weak homotopy equivalence in $\sSet$ after applying the
nerve \cite{Gur09Nerves,CCG10Nerves}.

\begin{defn}\label{defn:special}
  Let $X$ be a $\Ga$-object in $\cC$. We say $X$ is \emph{special} if
  the Segal maps 
  \[
  X(\ul{n}_{+}) \to X(\ul{1}_{+})^{n}
  \] 
  are weak equivalences.
\end{defn}

The main result of \cite{Seg74Categories} is that, given a
$\Gamma$-space $X$, one can produce a connective spectrum
$\widetilde{X}$. Moreover, if $X$ is special then $\widetilde{X}$ is
an almost $\Omega$-spectrum such that $\Om^\infty \widetilde{X}$ is a
group completion of $X(\ulp{1})$.  We recall how to express suspension
of spectra in terms of $\Ga$-simplicial sets using the standard
"inclusion" $\De^\op\to \sF$ as specified in \cite[Lemma
3.5]{maythomason} and the following smash product. Let $\wedge \colon
\sF\times \sF\to \sF$ be the functor that sends
$(\ul{n}_{+},\ul{p}_{+})$ to $\ul{(np)}_+=\ul{n}_{+} \vee \ldots \vee
\ul{n}_{+}$. Our reverse lexicographic convention differs from the
smash product in \cite[Construction 3.4]{maythomason} which considers
$\ul{(np)}_+$ as $\ul{p}_{+}\vee \ldots \vee \ul{p}_{+}$.

\begin{notn}\label{notn:bicat_closed}
  Let 
  \[
  \Phi \cn \bicat(\cA \times \cB, \cC) \to \bicat(\cA,\bicat(\cB,\cC))
  \]
  denote the biequivalence of functor bicategories given in
  \cite{street1980fb}, sending a pseudofunctor $F \cn \cA \times \cB \to
  \cC$ to the pseudofunctor
  \[
  \Phi(F)(a)(b) = F(a,b).
  \]
  We also let $\Phi$ denote the isomorphism of functor 2-categories
  \[
  [\cA \times \cB, \cC] \fto{\cong} [\cA, [\cB, \cC]].
  \]
  In order to justify using the same notation $\Phi$ for both of
  these, we note that both versions (reading vertical arrows upwards
  or downwards) of the square below commute,
  \begin{equation}\label{eqn:hom_adjs}
    \begin{xy}
      0;<20mm,0mm>:<0mm,15mm>:: 
      (0,.5)*+{[\cA \times \cB, \cC]}="a";
      (2.5,.5)*+{\bicat(\cA \times \cB, \cC)}="b";
      (0,-.5)*+{[\cA, [ \cB, \cC]]}="c";
      (2.5,-.5)*+{\bicat(\cA, \bicat(\cB, \cC))}="d";
      {\ar@{<->}_{\cong} "a"; "c"};
      {\ar "a"; "b"};
      {\ar "c"; "d"};
      {\ar@{<->}^{\simeq} "b"; "d"};
    \end{xy}
  \end{equation}
  with the downward direction being given by $\Phi$ on the vertical
  arrows.
\end{notn} 

\begin{defn}\label{defn:suspofgamma}
  Let $X\cn \sF \to \sSet$ be a special $\Ga$-simplicial set and let
  $X \circ \sma$ denote the composite
  \[
  \sF \times \De^{\op} \stackrel{\wedge}{\longrightarrow} \sF \stackrel{X}{\longrightarrow} \sSet.
  \]
  Let $d\cn [\De^\op,\sSet] \to \sSet$ denote the diagonal functor.
  We define the \emph{suspension}, $\Si X$, as the special $\Ga$-simplicial set $d \circ \Phi(X
  \circ \sma)$.
\end{defn}

\begin{prop}[\cite{Seg74Categories,BF1978}]\label{prop:suspofgamma}
  Let $X$ be a special $\Ga$-simplicial set and $\widetilde{X}$ its
  associated spectrum. Then the spectrum associated to $\Si X$ is
  stably equivalent to $\Si \widetilde{X}$.
\end{prop}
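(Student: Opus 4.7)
The plan is to prove the statement by unpacking Segal's construction on both sides and directly identifying the spectrum $\widetilde{\Si X}$ with the one-fold level shift of $\widetilde{X}$, which in turn represents $\Si\widetilde{X}$ in the stable homotopy category. The argument is at heart a Fubini-type manipulation of iterated diagonals of multisimplicial sets, combined with the observation that $\Si X$ is, by construction, the $\Ga$-simplicial set obtained from $X$ by smashing with the standard simplicial $S^{1}$.

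First I would recall that Segal's machinery prolongs any $\Ga$-simplicial set $X$ to a functor $\widehat{X}\colon \sSet_{*}\to \sSet$ (via left Kan extension along the canonical inclusion $\sF\hookrightarrow \sSet_{*}$, or equivalently by the diagonal formula), with $\widetilde{X}_{n}=\widehat{X}(S^{n})$, using the standard simplicial circle $S^{1}=([k]\mapsto \ul{k}_{+})$ and its iterated smashes. Reading directly off \cref{defn:suspofgamma}, one has
\[
(\Si X)(\ul{p}_{+}) \;=\; d\bigl([k]\mapsto X(\ul{p}_{+}\wedge \ul{k}_{+})\bigr) \;\simeq\; \widehat{X}(\ul{p}_{+}\wedge S^{1}),
\]
and by naturality together with the fact that $\widehat{X}$ preserves diagonals of simplicial pointed simplicial sets, this identification extends to all of $\sSet_{*}$, yielding a natural equivalence $\widehat{\Si X}(K)\simeq \widehat{X}(K\wedge S^{1})$.

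Taking $K=S^{n}$ and invoking associativity of the smash product gives
\[
\widetilde{\Si X}_{n} \;=\; \widehat{\Si X}(S^{n}) \;\simeq\; \widehat{X}(S^{n}\wedge S^{1}) \;\cong\; \widehat{X}(S^{n+1}) \;=\; \widetilde{X}_{n+1}
\]
levelwise. Since $X$ is special, both $\widetilde{X}$ and $\widetilde{\Si X}$ are almost $\Omega$-spectra by Segal's theorem, so this levelwise equivalence suffices to produce a stable equivalence between $\widetilde{\Si X}$ and the level shift $\widetilde{X}[1]$, which represents $\Si \widetilde{X}$ in the stable homotopy category. The main obstacle is bookkeeping rather than conceptual: one must verify that the levelwise equivalences respect the structure maps of the two spectra, and that the reverse-lexicographic convention for $\wedge$ on $\sF$ adopted in \cref{defn:suspofgamma} matches the convention chosen for the iterated smashes defining $S^{n}$. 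Once these compatibilities are settled the stable equivalence asserted in the proposition follows.
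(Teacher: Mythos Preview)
The paper does not supply its own proof of this proposition: it is stated with attribution to \cite{Seg74Categories,BF1978} and used as a black box. Your sketch is a faithful reconstruction of the standard argument one extracts from those references---prolong $X$ to $\widehat{X}$ on pointed simplicial sets, recognise from \cref{defn:suspofgamma} that $\widehat{\Si X}(K)\simeq \widehat{X}(K\wedge S^{1})$, and read off the level shift $\widetilde{\Si X}_{n}\simeq \widetilde{X}_{n+1}$---so there is nothing to compare against beyond noting that your outline matches the intended content of the citation.

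Two small remarks on the sketch itself. First, you invoke ``both $\widetilde{X}$ and $\widetilde{\Si X}$ are almost $\Omega$-spectra'' without checking that $\Si X$ is special; this is true (the Segal maps for $\Si X$ are diagonals of levelwise weak equivalences, and the diagonal preserves both products and levelwise weak equivalences), but in fact you do not need it: once the levelwise identifications are shown to respect the structure maps, a levelwise equivalence of spectra is already a stable equivalence. Second, the compatibility with structure maps that you flag as ``bookkeeping'' is exactly the content that requires the Fubini/associativity argument for iterated diagonals of multisimplicial sets, together with the convention check you mention; it is routine but it is where the actual work lies, so in a fully written proof it deserves more than a parenthetical.
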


Given a permutative category $C$, there are several equivalent ways of
constructing a special $\Ga$-category. The following was first
constructed by Thomason \cite[Definition 4.1.2]{Tho79Homotopy}. 

\begin{constr}\label{constr:cn}
  Let $(C, \oplus, e)$ be a permutative category. We can construct a
  pseudofunctor
  \[
  C^{(-)} \cn \sF \to \cat_2
  \]
  which sends $\ul{m}_+$ to $C^{m}$. Given a morphism $\phi \cn
  \ulp{m} \to \ulp{n}$, the corresponding functor $\phi_* \cn C^m \to
  C^n$ is defined uniquely by the requirement that the squares below
  commute for each projection $\pi_j \cn C^n \to C$.
 \[
  \begin{xy}
    0;<25mm,0mm>:<0mm,12mm>:: 
    (0,0)*+{C^m}="a0";
    (0,-1)*+{C^n}="a1";
    (1,0)*+{C^{\phi^{-1}(j)}}="b0";
    (1,-1)*+{C}="b1";
    {\ar "a0"; "b0"};
    {\ar_-{\phi_*} "a0"; "a1"};
    {\ar^-{\oplus} "b0"; "b1"};
    {\ar_-{\pi_j} "a1"; "b1"};
  \end{xy}
  \]
  The top horizontal map is the projection onto the coordinates which
  appear in $\phi^{-1}(j)$. The $\oplus$ appearing on the right
  vertical map is the iterated application of the tensor product
  $\oplus$, with the convention that if $\phi^{-1}(j)$ is empty, then
  the map is the constant functor on the unit $e$. This assignment is
  not strictly functorial, but the permutative structure provides
  natural isomorphisms
  \[
  \psi_* \circ \phi_* \cong (\psi \circ \phi)_*
  \]
  which are uniquely determined by the symmetry.  These isomorphisms
  assemble to make $C^{(-)}$ a pseudofunctor.
\end{constr}
\begin{defn}\label{defn:ktheory}
  The \emph{$K$-theory} of $C$ is the functor 
  \[
  KC=N\circ Q_j\big(C^{(-)}\big)\cn \sF \to \sSet,
  \] 
  where $N$ is the usual nerve functor $\cat \to \sSet$ and $Q_j$ is
  the left 2-adjoint from \cref{prop:functor_2monadic} when $\cK =
  \cat_2$.
\end{defn} 

\begin{rmk}\label{rmk:reduced}
  Although the pseudofunctor $C^{(-)}$ satisfies the property that it
  maps $\ulp{0}$ to $\ast$, its strictification $Q_j\big(C^{(-)}\big)$
  does not. Thus $Q_j\big(C^{(-)}\big)$ is a functor $\sF \to \cat$,
  but it is not a $\Ga$-category as in \cref{defn:gammathings}. Since
  $Q_j\big(C^{(-)}\big)$ is levelwise equivalent to $C^{(-)}$, and in
  particular, $Q_j\big(C^{(-)}\big)(\ulp{0})$ is contractible, we can
  replace $N\circ Q_j\big(C^{(-)}\big)$ by a levelwise equivalent
  $\Ga$-simplicial set. This replacement is made implicitly here, and
  throughout the remainder of the paper.
\end{rmk}

\begin{lem}\label{lem:adj_doesnt_matter}
  Consider the composite
  \[
  [\sF \times \De^{\op}, \cat] \xrightarrow{\ \Phi\ } [\sF, [\De^{\op}, \cat]] \xrightarrow{\ N_{*} \circ\, -\ } [\sF, [\De^{\op}, \sset]] \xrightarrow{\ d \circ\, -\ } [\sF,\sSet].
  \]
  If $F$ is a levelwise weak equivalence of diagrams $\sF \times
  \De^{\op} \to \cat$, then $dN_*\Phi(F)$ is a levelwise weak
  equivalence of diagrams $\sF \to \sSet$.
\end{lem}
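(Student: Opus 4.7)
The plan is to reduce the claim, for each fixed $\ul{n}_+\in\sF$, to the standard fact that the diagonal functor on bisimplicial sets preserves levelwise weak equivalences. First I would unpack the composite: for a fixed $\ul{n}_+$, the object $\Phi(F)(\ul{n}_+)\in[\De^{\op},\cat]$ is the simplicial category sending $[k]$ to $F(\ul{n}_+,[k])$, so $N_{*}\Phi(F)(\ul{n}_+)$ is the bisimplicial set $[k]\mapsto N\,F(\ul{n}_+,[k])$, and $dN_{*}\Phi(F)(\ul{n}_+)$ is its diagonal. In particular, $dN_{*}\Phi(F)(\ul{n}_+)$ is the diagonal of the map of bisimplicial sets induced levelwise in $[k]$ by the components of $F$.

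Next, the hypothesis that $F$ is a levelwise weak equivalence of diagrams $\sF\times\De^{\op}\to\cat$ says precisely that each component $F(\ul{n}_+,[k])$ is a weak equivalence of categories, which by our convention means $N\,F(\ul{n}_+,[k])$ is a weak equivalence of simplicial sets. Hence the map of bisimplicial sets $N_{*}\Phi(F)(\ul{n}_+)$ is a levelwise weak equivalence in the $\De^{\op}$-direction. I would then invoke the classical diagonal lemma for bisimplicial sets (see, e.g., \cite{BF1978}), which states that the diagonal of such a map is a weak equivalence of simplicial sets. This shows that $dN_{*}\Phi(F)(\ul{n}_+)$ is a weak equivalence for each $\ul{n}_+$, giving the desired conclusion.

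The proof is essentially an unwrapping of definitions followed by a single citation, and I do not anticipate any substantive obstacle. The only point requiring care is the identification of the composite $dN_{*}\Phi$ evaluated at $\ul{n}_+$ with the diagonal of the bisimplicial set obtained by applying the nerve componentwise in $\De^{\op}$; once this is in hand the diagonal lemma does all the work.
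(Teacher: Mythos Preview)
Your proposal is correct and matches the paper's proof essentially verbatim: the paper simply cites \cite[Theorem B.2]{BF1978}, the diagonal lemma for bisimplicial sets, after implicitly unpacking the composite exactly as you do. The only minor slip is that you initially call $\Phi(F)(\ul{n}_+)$ an ``object'' and ``simplicial category'' when it is a map of such, but you immediately recover the correct reading in the next sentence.
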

\begin{proof}
  This follows from \cite[Theorem B.2]{BF1978}, which states that if
  $f\cn X\to Y$ is a map of bisimplicial sets such that
  $X_{n,\bullet}\to Y_{n,\bullet}$ is a weak equivalence of simplicial
  sets for all $n \geq 0$, then $d(f)\cn d(X) \to d(Y)$ is a weak
  equivalence.
\end{proof}

To relate the $\Gamma$-simplicial set $\Sigma KC$ to the $K$-theory of the permutative Gray-monoid $\Sigma C$, we provide a new construction of a special $\Ga$-2-category $\ul{K}(\Si C)$ and show it is levelwise weakly equivalent to the $K$-theory defined in \cite{GJO2015KTheory}. 
\begin{notn}\label{notn:tricat_twocatpim}
  Let $\twocatppm$ denote the tricategory whose objects are
  2-categories, and whose higher cells are pseudofunctors,
  pseudonatural transformations, and modifications
  \cite{Gurski13Coherence}.
\end{notn}

\begin{lem}\label{lem:pgm_to_psf}
  Let $(\cD, \oplus, e)$ be a permutative Gray-monoid.  Then there is
  a pseudofunctor of tricategories $\cD^{(-)} \cn \sF \to \twocatppm$
  with value at $\ul{m}_{+}$ given by $\cD^{m}$.  If $\cD$ has a
  single object, then this becomes a pseudofunctor of 2-categories
  $\cD^{(-)} \cn \sF \to \twocatpi$.
\end{lem}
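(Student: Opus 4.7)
The plan is to lift Thomason's permutative category construction (\cref{constr:cn}) to the 2-categorical setting, replacing permutative categories by permutative Gray-monoids and following the same pattern one dimension higher.

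For a morphism $\phi : \ul{m}_{+} \to \ul{n}_{+}$ in $\sF$, define a pseudofunctor $\phi_{*} : \cD^{m} \to \cD^{n}$ componentwise. Its $j$-th component $\pi_{j} \circ \phi_{*}$ is the iterated monoidal product of the coordinates indexed by $\phi^{-1}(j)$, in the order inherited from $\ul{m}_{+}$, with the empty product taken to be the constant 2-functor at $e$. The iterated product $\cD^{\otimes k} \to \cD$ is a genuine 2-functor since $\oplus$ is strictly associative in the sense of Gray-monoids; precomposing with the canonical pseudofunctor $\cD^{k} \to \cD^{\otimes k}$ then makes each component a pseudofunctor, with structure 2-cells built from the Gray 2-cells $\Sigma_{f,g}$. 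For composable $\phi, \psi$ in $\sF$ we must then build a pseudonatural isomorphism $\psi_{*} \circ \phi_{*} \cong (\psi \circ \phi)_{*}$: both sides apply iterated $\oplus$ to the same coordinates but in different orders, and the required reordering is supplied by the symmetry $\beta$, whose 2-naturality assembles the components into a single pseudonatural isomorphism of pseudofunctors $\cD^{m} \to \cD^{p}$.

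It then remains to verify the coherence axioms of a pseudofunctor of tricategories $\sF \to \twocatppm$, namely the associativity and unit axioms for these compositors. By unwinding the iterated tensor products they reduce to the three axioms of \cref{defn:pgm} for $\beta$, exactly as in the 1-dimensional verification carried out implicitly in \cref{constr:cn}. The step I expect to require the most care is tracking how iterated applications of $\beta$ interact with the Gray structure 2-cells $\Sigma_{f,g}$ on composite 1-cells; this interaction is controlled entirely by the 2-naturality of $\beta$ together with the axioms of \cref{defn:pgm}.

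For the final assertion, suppose $\cD$ has a single object. Then so does every $\cD^{m}$, and any two pseudofunctors $\cD^{m} \to \cD^{n}$ automatically agree on objects, so every pseudonatural transformation between them has identity object components and is therefore an icon. Consequently $\cD^{(-)}$ factors through the sub-2-category $\twocatpi \subset \twocatppm$, giving a pseudofunctor of 2-categories $\sF \to \twocatpi$ as claimed.
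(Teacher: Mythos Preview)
Your argument for the second assertion contains a genuine error. The implication ``any two pseudofunctors $\cD^{m}\to\cD^{n}$ agree on objects, so every pseudonatural transformation between them has identity object components'' is false: agreeing on objects only means that the components of a pseudonatural transformation are \emph{endomorphisms} of the unique object of $\cD^{n}$, not that they are identities. (For instance, any 1-cell $f\cn e\to e$ in $\Si C$ gives a pseudonatural transformation from the identity pseudofunctor to itself with component $f$.) What you actually need is that the \emph{specific} compositors $\psi_{*}\circ\phi_{*}\cong(\psi\circ\phi)_{*}$ you constructed are icons. This is true, but for a different reason: their object components are built from components of $\beta$ at the unique object, and that object is $e$, so the second axiom of \cref{defn:pgm} forces these components to be identities. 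You should also note that the coherence modifications of the tricategorical pseudofunctor become identities, so that the data genuinely lands in a 2-category; this too follows from the strictness of the permutative Gray-monoid axioms, but it needs to be said.

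By contrast, the paper avoids the tricategorical construction entirely for the one-object case. It invokes \cref{cor:equivs} to write $\cD=\Si D$ for a permutative category $D$, observes that the ordinary pseudofunctor $D^{(-)}\cn\sF\to\cat_2$ of \cref{constr:cn} in fact lands in $\stmoncat$ (strict monoidal categories, strong monoidal functors, monoidal transformations), and then post-composes with the 2-functor $\Si\cn\stmoncat\to\twocatpi$. This sidesteps any direct analysis of pseudonatural transformations in $\twocatppm$: since $\Si$ is already a 2-functor into $\twocatpi$, the composite is automatically a pseudofunctor of 2-categories. Your route is more direct in spirit and would work once the gap above is repaired, but the paper's route buys you the icon property for free by building everything from the 1-categorical construction.
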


\begin{proof}
  The first claim is a special case of \cite[Theorem
  2.5]{GO12Infinite}. For the second claim, by \cref{cor:equivs}, it
  suffices to work with $\Si D$ for a permutative category $D$. Recall
  from \cref{constr:cn} that we have the pseudofunctor
  \[
  D^{(-)} \cn \sF \to \Cat_2.
  \]
  The permutative structure on $D$ in fact makes each $D^{m}$ a strict
  monoidal category with pointwise tensor product and unit, and each
  functor $\phi_* \cn D^{m} \to D^{n}$ for $\phi \cn \ul{m}_{+} \to
  \ul{n}_{+}$ a strong monoidal functor.  One can verify that the
  isomorphisms $\psi_* \circ \phi_* \cong (\psi \circ \phi)_*$ are
  themselves monoidal, so we get a pseudofunctor
  \[
  D^{(-)} \cn \sF \to \stmoncat
  \]
  from $\sF$ to the 2-category $\stmoncat$ of strict monoidal categories, strong
  monoidal functors, and monoidal natural transformations.  Note that
  $(\Si D)^{m} \cong \Si(D^{m})$, so we define
  \[
  (\Si D)^{(-)} = \Si \circ  D^{(-)}
  \]
  where $\Si$ is now the 2-functor $\stmoncat \to \twocatpi$ which
  views each strict monoidal category as the hom-category of a
  2-category with a single object.  This composite is the desired
  pseudofunctor.
\end{proof}

\begin{defn}[\cite{LP20082nerves}]\label{defn:2nerve}
  Let $\cA$ be a 2-category.  The \emph{nerve} of $\cA$ is the
  simplicial category $N\cA \cn \De^{\op} \to \Cat$ defined by
  \[
  N\cA_{n} = \twocattwoi([n], \cA)
  \]
  where $[n]$ is the standard category $0 \to 1 \to \cdots \to n$
  treated as a discrete 2-category.
  This is the function on objects of a 2-functor from $\twocattwoi$ to $[\De^\op,\cat_2]$.
\end{defn}

\begin{rmk}\label{rmk:nerve_or_2nerve}
  We note that this is called the \emph{2-nerve} by Lack and Paoli.
  It is related but not equal to the general bicategorical nerve of
  \cite{Gur09Nerves,CCG10Nerves}.  Detailed comparisons are given in
  \cite{CCG10Nerves}.
\end{rmk}

Unpacking this definition, $N\cA_0=\ob \cA$ as a discrete
category. When $n\geq 1$,
\[
N\cA_{n}=\coprod_{a_0,\dots, a_n\in \ob \cA} \cA(a_{n-1},a_n)\times\cdots \times \cA(a_0,a_1).
\]
Using this same formula, we define the nerve on $\catgrph$ which fits in the following commuting diagram.
\[
\begin{xy}
  0;<28mm,0mm>:<0mm,6mm>:: 
  (0,0)*+{\Cat\mh\mathpzc{Grph}}="a0";
  (0,2)*+{\twocattwoi}="a2";
  (1,0)*+{[\ob\, \Delta^{\op}, \Cat_2]}="b0";
  (1,2)*+{[\Delta^{\op}, \Cat_2]}="b2";
  {\ar_-{N} "a0"; "b0"};
  {\ar^-{N} "a2"; "b2"};
  {\ar_-{} "a2"; "a0"};
  {\ar^-{} "b2"; "b0"};
\end{xy}
\]
Let $S$ be the 2-monad on $\Cat\mh\mathpzc{Grph}$ whose algebra
2-category is $\twocattwoi$ (\cref{prop:2cat2i_2monadic}).  Let $T$
be the 2-monad on $[\ob\, \Delta^{\op}, \Cat_2]$ whose algebra 2-category
is $[\Delta^{\op}, \Cat_2]$ (\cref{prop:functor_2monadic}).  
We now apply \cref{prop:map=lift} to show that the nerve extends to $\twocatpi$.
\begin{lem}\label{lem:nerve-is-map-of-2-monads}
  The nerve
  $N$ is a strict map of 2-monads $S \to T$ and therefore provides the
  middle map in the
  commutative diagram below.
  \[
  \begin{xy}
    0;<28mm,0mm>:<0mm,12mm>:: 
    (0,0)*+{\Cat\mh\mathpzc{Grph}}="a0";
    (0,1)*+{\twocatpi}="a1";
    (0,2)*+{\twocattwoi}="a2";
    (1,0)*+{[\ob\, \Delta^{\op}, \Cat_2]}="b0";
    (1,1)*+{\gray(\Delta^{\op}, \Cat_2)}="b1";
    (1,2)*+{[\Delta^{\op}, \Cat_2]}="b2";
    {\ar_-{N} "a0"; "b0"};
    {\ar^-{N} "a1"; "b1"};
    {\ar^-{N} "a2"; "b2"};
    {\ar_-{i} "a2"; "a1"};
    {\ar_-{U} "a1"; "a0"};
    {\ar^-{i} "b2"; "b1"};
    {\ar^-{U} "b1"; "b0"};
  \end{xy}
  \]
\end{lem}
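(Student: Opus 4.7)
The plan is to apply \cref{prop:map=lift}. Since the outer square (the commutative square involving $\Cat\mh\mathpzc{Grph}$, $\twocattwoi$, and their $N$-images) has already been established, it suffices to exhibit $N\cn \Cat\mh\mathpzc{Grph} \to [\ob\,\De^\op, \Cat_2]$ as a strict map of 2-monads $S \to T$ in the sense of \cref{defn:map-of-2-monads}, that is, to construct a 2-natural transformation $\la\cn T \circ N \Rightarrow N \circ S$ satisfying the unit and multiplication axioms. \Cref{prop:map=lift} then supplies the middle map $N\cn \twocatpi \to \gray(\De^\op, \Cat_2)$ and its compatibility with the inclusions and forgetful 2-functors.

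To construct $\la$, I first note that the inclusion $\ob\,\De^\op \hookrightarrow \De^\op$ has discrete domain, so enriched left Kan extension is simply a coproduct and
\[
T(X)_n \;\cong\; \coprod_{k \geq 0} X_k \times \De([n],[k])
\]
for any $X \cn \ob\,\De^\op \to \Cat_2$. Thus an object of $T(N\cA)_n$ is a pair $(x,\phi)$, where $x = (f_k,\ldots,f_1)$ is a composable string in $\cA$ with $f_i \cn a_{i-1} \to a_i$ and $\phi \cn [n] \to [k]$ is a morphism in $\De$. I define the component $\la_{\cA}$ by sending $(x,\phi)$ to the chain in $N(S\cA)_n$ whose $i$-th 1-cell is the sub-string $(f_{\phi(i)},\ldots,f_{\phi(i-1)+1})$, viewed as a 1-cell in the free 2-category $S\cA$ (and interpreted as the identity 1-cell at $a_{\phi(i)}$ when $\phi(i-1)=\phi(i)$). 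The analogous definition on morphisms of hom-categories, built componentwise out of the string data, makes $\la_{\cA}$ a morphism in $[\ob\,\De^\op,\Cat_2]$. Functoriality in $[n]\in\De^\op$ is immediate from the universal property of left Kan extension, and 2-naturality in $\cA$ is routine: morphisms of $\Cat$-graphs act componentwise, and the nerve construction turns these into componentwise actions on product hom-categories, so all relevant squares commute on the nose.

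Finally, I verify the two axioms. The unit axiom $\la \circ \eta_T N = N\eta_S$ reduces to the observation that $(x,\id_{[n]})$ partitions into $n$ singleton sub-strings, which is exactly the image of $x = (f_n,\ldots,f_1)$ under $N\eta_S$, since $\eta_S \cn \cA \hookrightarrow S\cA$ sends each $f_i$ to the length-one string $(f_i)$. The multiplication axiom $\la \circ \mu_T N = N\mu_S \circ \la S \circ T\la$ amounts to a combinatorial claim: given $(x,\psi,\phi)$ with $\psi\cn[k]\to[j]$ and $\phi\cn[n]\to[k]$, partitioning $x$ by the composite $\psi\phi$ yields the same $n$-chain as first partitioning $x$ by $\psi$ (producing $k$ sub-strings in $S\cA$) and then concatenating consecutive sub-strings according to $\phi$ (producing $n$ strings). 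This is associativity of partition-and-concatenate, following from order-preservation of $\phi$ and $\psi$. The main obstacle is only the bookkeeping in the multiplication axiom; the argument itself is entirely formal combinatorics. With both axioms in hand, \cref{prop:map=lift} yields the claimed middle map and commutative diagram.
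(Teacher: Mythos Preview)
The paper does not actually give a proof of this lemma: it states the result immediately after the sentence ``We now apply \cref{prop:map=lift} to show that the nerve extends to $\twocatpi$'' and then moves on.  So your proposal is not to be compared against an existing argument but rather fills in details the authors leave implicit.  Your approach is exactly the one the paper signals: invoke \cref{prop:map=lift} by exhibiting the requisite 2-natural transformation $\la\cn TN \Rightarrow NS$ and checking the unit and multiplication axioms.  The construction you give---using the left Kan extension formula to identify $T(N\cA)_n$ with pairs $(x,\phi)$ and sending these to the $\phi$-partition of $x$ viewed as an $n$-chain in the free 2-category $S\cA$---is the natural one, and the verification of the two axioms as ``partition-and-concatenate'' combinatorics is correct.

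One small redundancy: your remark that ``functoriality in $[n]\in\De^{\op}$ is immediate from the universal property of left Kan extension'' is unnecessary, since the target of $\la$ is $[\ob\,\De^{\op},\Cat_2]$ and $\ob\,\De^{\op}$ is discrete, so the components $\la_{\cA}$ need only be a family of functors indexed by $n$ with no naturality condition in $n$.  This does not create a gap; it is simply a sentence you can delete.
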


We now define the $\Ga$-objects we will use to understand $K$-theory
of a suspension.

\begin{defn}\label{defn:naivek}
  Let $C$ be a permutative category with $\Si C$ its suspension
  permutative Gray-monoid.  Let $Q=Q_{j}(Q_i)_{*}$ denote the left 2-adjoint of the
  inclusion $J:[\sF, \twocattwoi] \hookrightarrow \bicat(\sF,
  \twocatpi)$ constructed in \cref{cor:J_Q_adj}.
\begin{enumerate}
\item Define $\ul{K} (\Si C)$ to be $Q\Big( (\Si
  C)^{(-)}\Big)$.  This is a functor $\sF \to  \iicat$. 
\item\label{it:Kadj} The composite $N\circ \ul{K}(\Si C)$ is a functor $\sF\to [\Delta^{\op},\cat]$. Define $K_{adj} (\Si C)$ to be $\Phi^{-1} \big( N\circ \ul{K} (\Si C) \big)$.
\end{enumerate}
\end{defn}

The composite
\[\iicat \fto{N} [\Delta^{\op},\cat] \fto{N_*} [\Delta^{\op},\sSet]\fto{d} \sSet\]
is one of the versions of the nerve for 2-categories in
\cite{CCG10Nerves}. Post-composing $\ul{K}(\Si C)$ with this functor
(and, as noted in \cref{rmk:reduced}, implicitly replacing with a
reduced diagram) yields a $\Ga$-simplicial set which is a model of the
$K$-theory of $\Si C$. We make this rigorous in the following lemma,
which relates the definition of $K$-theory here with that introduced
in \cite{GJO2015KTheory}, here denoted by $\widetilde{K}$.

For a permutative Gray-monoid $\cD$, $\widetilde{K}(\cD)$ is a special
$\Ga$-2-category such that an object at level $n$ is an object in
$\cD$, together with an explicit way of decomposing it as a sum of $n$
objects. This allows for strict functoriality with respect to
$\sF$. This construction generalizes the construction of
\cite{may78spectra,Man10Inverse} for permutative categories.

\begin{lem}\label{naivek_equals_k}
  Let $(C, \oplus, e)$ be a permutative category.  There is a levelwise weak
  equivalence between the $\Ga$-2-categories
  $\ul{K}(\Si C)$ and $\widetilde{K}(\Si C)$, hence a stable
  equivalence between the spectra these represent.
\end{lem}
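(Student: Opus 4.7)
The strategy is to show that both $\ul{K}(\Si C)$ and $\widetilde{K}(\Si C)$ are pseudonaturally equivalent to the pseudofunctor $(\Si C)^{(-)} \cn \sF \to \twocatpi$ from \cref{lem:pgm_to_psf}. The equivalence with $\ul{K}(\Si C)$ is essentially built in: by definition $\ul{K}(\Si C) = Q\big((\Si C)^{(-)}\big)$, so by \cref{cor:J_Q_adj} the unit of the adjunction $Q \dashv J$ provides a pseudonatural equivalence $(\Si C)^{(-)} \to J\ul{K}(\Si C)$ whose components are internal equivalences in $\twocatpi$, hence ordinary equivalences of 2-categories.

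On the $\widetilde{K}$-side, the Segal-style construction of \cite{GJO2015KTheory} equips each 2-category $\widetilde{K}(\Si C)(\ul{n}_+)$ with a canonical evaluation 2-functor to $(\Si C)^n$ that extracts the underlying $n$-tuple of objects from the coherent-decomposition data; these 2-functors are equivalences of 2-categories by design. Since $\widetilde{K}(\Si C)$ is strictly functorial in $\sF$ while $(\Si C)^{(-)}$ is only pseudofunctorial, the evaluation 2-functors fail to commute strictly with the action of morphisms in $\sF$, but they do so up to the comparison isomorphisms $\psi_* \circ \phi_* \cong (\psi \circ \phi)_*$ of \cref{constr:cn}. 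Taking these as pseudonaturality 2-cells gives a pseudonatural equivalence $\widetilde{K}(\Si C) \to (\Si C)^{(-)}$, whose pseudonaturality axioms reduce to coherence conditions for the permutative structure on $C$.

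Composing these two pseudonatural equivalences yields a pseudonatural equivalence between the underlying pseudofunctors of the strict $\Ga$-2-categories $\widetilde{K}(\Si C)$ and $\ul{K}(\Si C)$. Since equivalences of 2-categories induce weak equivalences on nerves \cite{CCG10Nerves}, the desired levelwise weak equivalence follows without the need to lift to a strict map; one could, if preferred, use the universal property of $Q \dashv J$ to lift the composite to a strict 2-natural transformation between the two strict diagrams.

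For the statement about spectra, both $\Ga$-2-categories are special---$\widetilde{K}(\Si C)$ by construction in \cite{GJO2015KTheory}, and $\ul{K}(\Si C)$ by the just-established levelwise equivalence---so the claimed stable equivalence follows from the standard homotopy theory of $\Ga$-spaces \cite{BF1978,Seg74Categories}. The principal obstacle is verifying that the evaluation 2-functors of $\widetilde{K}$ genuinely assemble into a pseudonatural transformation with precisely the pseudonaturality 2-cells described above; this requires unpacking the definition of $\widetilde{K}$ in \cite[Section 6]{GJO2015KTheory} and checking that the pseudonaturality axioms follow from the coherence of the symmetric monoidal structure on $\Si C$, which is ultimately a coherence calculation for the permutative category $C$.
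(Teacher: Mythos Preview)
Your strategy is sound and your proposal would work, but it differs from the paper's proof in direction and in what it produces.  The paper does not use the Segal maps out of $\widetilde{K}(\Si C)$; instead it constructs an explicit pseudonatural transformation $\check{Z}\cn (\Si C)^{(-)} \to \widetilde{K}(\Si C)$ going \emph{into} $\widetilde{K}$, by writing down concrete formulas for where the unique object, the 1-cells $(x_1,\ldots,x_n)$, and the 2-cells $(f_1,\ldots,f_n)$ of $(\Si C)^n$ land in $\widetilde{K}(\Si C)(\ul{n}_+)$.  Because $\ul{K}(\Si C)=Q\big((\Si C)^{(-)}\big)$, the universal property of $Q\dashv J$ converts $\check{Z}$ directly into a \emph{single strict} map $Z\cn \ul{K}(\Si C)\to \widetilde{K}(\Si C)$ of $\Ga$-2-categories.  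Since both sides are special, the paper then only needs to verify that $\check{Z}(\ul{1}_+)$ is a weak equivalence, and in fact observes it is an isomorphism.  Your route, by contrast, composes the Segal-map equivalence $\widetilde{K}(\Si C)\to (\Si C)^{(-)}$ with the unit $(\Si C)^{(-)}\to J\ul{K}(\Si C)$; this yields a pseudonatural equivalence $J\widetilde{K}(\Si C)\to J\ul{K}(\Si C)$, and lifting via the adjunction would only give a zigzag $\widetilde{K}(\Si C)\leftarrow QJ\widetilde{K}(\Si C)\to \ul{K}(\Si C)$ of strict maps rather than a single one.  That still suffices for the stable-equivalence conclusion, so nothing is lost, but the paper's choice of direction is what allows it to land a genuine strict map in one step.

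One small imprecision: the 2-cells filling your pseudonaturality squares are not literally the pseudofunctoriality constraints $\psi_*\circ\phi_*\cong(\psi\circ\phi)_*$ of \cref{constr:cn}.  Those constraints compare two ways of acting by a \emph{composable pair} in $\sF$, whereas the pseudonaturality data you need compares $\phi_*\circ(\text{Segal})$ with $(\text{Segal})\circ\widetilde{K}(\phi)$ for a \emph{single} $\phi$.  The required icons do exist and are, as you say, ultimately instances of the symmetry in $C$, but they arise from the internal coherence data of $\widetilde{K}(\Si C)$ rather than from the associator of the pseudofunctor $(\Si C)^{(-)}$.  This is exactly the verification you flag as ``the principal obstacle''; the paper sidesteps it by working with the explicit formulas for $\check{Z}$ instead.
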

\begin{proof}
  We shall prove that there is a levelwise weak equivalence $\ul{K}(\Si C) \to \widetilde{K}(\Si C)$ of $\Ga$-2-categories.  Since both of these are special, it suffices to construct such a map
  and check that it is a weak equivalence when evaluated at
  $\ul{1}_{+}$.  The functor $Q$ is a left adjoint, so strict maps
  $Z\cn \ul{K}(\Si C)=Q\Big( (\Si C)^{(-)}\Big) \to \widetilde{K}(\Si C)$ are in
  bijection with pseudonatural transformations
  \[
  \check{Z} \cn (\Si C)^{(-)} \to \widetilde{K}(\Si C)
  \]
  in $\bicat(\sF, \twocatpi)$.  This bijection is induced by
  composition with a universal pseudonatural transformation
  $\eta\cn(\Si C)^{(-)} \to Q\Big( (\Si C)^{(-)}\Big)$, so we have the
  commutative triangle shown below.
  \[
  \xy
  (0,0)*+{(\Si C)^{(-)}}="1"; 
  (25,0)*+{Q\Big( (\Si C)^{(-)}\Big)}="2";
  (25,-15)*+{\widetilde{K}(\Si C)}="3";
  {\ar^{\eta} "1";"2" };
  {\ar^{{Z}} "2";"3"  };
  {\ar_{\check{Z}} "1";"3"  };
  \endxy
  \]
  We know that $\eta$ is a levelwise weak equivalence by
  \cref{cor:J_Q_adj}, so the component of $Z$ at $\ul{1}_{+}$ is a
  weak equivalence if and only if the same holds for $\check{Z}$.

  We will construct the pseudonatural transformation $\check{Z}$.  In order to do so,
  we briefly review the data which define the cells of
  $\widetilde{K}(\Si C)(\ul{n}_{+})$; we omit the axioms these data
  must satisfy and refer the reader to \cite{GJO2015KTheory}.  Because
  $\Si C$ has a single object, an object of $\widetilde{K}(\Si
  C)(\ul{n}_{+})$ consists of objects $c_{s,t}$ of the permutative
  category $C$ for $s,t$ disjoint subsets of $\ul{n} = \{1, \ldots, n
  \}$.  We denote such an object as $\{c_{s,t}\}$ or, when more detail
  is useful, a function
  \[
  \{s, t \,\mapsto\, c_{s,t}\}.
  \]
  A 1-cell $\{ c_{s,t} \} \to \{ d_{s,t} \}$ consists of objects $x_s$
  of $C$ for $s \subset \ul{n}$ together with isomorphisms
  \[
  \ga_{s,t} \cn x_t \oplus x_s \oplus c_{s,t} \cong d_{s,t} \oplus x_{s \cup t}.
  \]
  We denote this as $\{x_s, \ga_{s,t}\}$ or, in functional notation,
  \[
  \left\{ \genfrac{}{}{0pt}{}{s \mapsto x_s}{s,t \mapsto \ga_{s,t}}  \right\}.
  \]
  A 2-cell $\{x_s, \ga_{s,t} \} \Rightarrow \{ y_s, \de_{s,t} \}$
  consists of morphisms $\al_s \cn x_s \to y_s$ in $C$.  We denote
  this $\{\al_s\}$ or with a corresponding functional notation.

  Now $(\Si C)^{\ul{n}_{+}}$ is $(\Si C)^{n} \cong \Si (C^{n})$ by
  definition.  We define $\check{Z}$ on cells as follows.
  \begin{itemize}
  \item The unique 0-cell of $\Si (C^{n})$ maps to the object of
    $\widetilde{K}(\Si C)(\ul{n}_{+})$ with $c_{s,t} = e$ for all
    $s,t$.
  \item A 1-cell $(x_1, \ldots, x_n)$ maps to the 1-cell
    \[
    \left\{ 
      \genfrac{}{}{0pt}{}
      {s \mapsto \oplus_{i\in s} x_i}
      {s,t \mapsto \la_{s,t}}
    \right\}.  
    \]
    where $\la_{s,t}$ denotes the unique interleaving symmetry isomorphism
    \[
    (\oplus_{i\in s} x_i) \oplus (\oplus_{j\in t} x_j) \cong \oplus_{k\in s \cup t} x_k.
    \]
  \item A 2-cell $(f_1, \ldots, f_n)$ maps to the 2-cell 
    \[
    \{s \mapsto \oplus_{i\in s} f_i\}.
    \]
  \end{itemize}
  Using the permutative structure of $C$, it is straightforward to
  verify that the formulas above satisfy the axioms of \cite[Section
  6.1]{GJO2015KTheory} and therefore define valid cells.  Clearly
  $\check{Z}$ sends the identity 1-cell of $\Si (C^{n})$, namely $(e,
  \ldots, e)$, to the identity 1-cell in $\widetilde{K}(\Si
  C)(\ul{n}_{+})$.  Now composition of 1-cells in $\Si (C^{n})$ is
  given by the monoidal structure, so
  \[
  (x_1, \ldots, x_n) \circ (y_1, \ldots, y_n) = (x_1 \oplus y_1, \ldots, x_n \oplus y_n).
  \]
  We have a similar formula for composition in $\widetilde{K}(\Si
  C)(\ul{n}_{+})$, with the object part of $\{x_s, \ga_{s,t} \} \circ
  \{y_s, \de_{s,t} \}$ being given on $s$ by $x_s \oplus y_s$.  From
  these formulas, we see that $\check{Z}$ does not strictly preserve
  1-cell composition since
  \[
  \check{Z}(x_1, \ldots, x_n) \circ \check{Z}(y_1, \ldots, y_n) = 
  \left\{ 
    \genfrac{}{}{0pt}{}
    {s \mapsto (\oplus_{i\in s} x_i) \oplus (\oplus_{i\in s} y_i)}
    {s,t \mapsto \mu_{s,t}}
  \right\}
  \]
  where $\mu$ denotes the unique interleaving symmetry isomorphism.
  On the other hand,
  \[
  \check{Z}(x_1\oplus y_1, \ldots, x_n\oplus y_n) = 
  \left\{ 
    \genfrac{}{}{0pt}{}
    {s \mapsto \oplus_{i \in s} (x_i \oplus y_i)}
    {s,t \mapsto \la_{s,t}}
  \right\}.
  \]
  These are isomorphic by a unique
  symmetry, and that data equips
  \[
  \check{Z}(\ul{n}_{+}) \cn (\Si C)^{{n}} \to \widetilde{K}(\Si C)(\ul{n}_{+})
  \]
  with the structure of a normal (i.e., strictly unit-preserving)
  pseudofunctor.
  
  Now let $\phi \cn \ul{m}_{+} \to \ul{n}_{+}$ in $\sF$.  We must
  construct an invertible icon in the square below.
  \begin{equation*}
    \begin{xy}
      0;<13mm,0mm>:<0mm,15mm>:: 
      (0,.5)*+{(\Si C)^{m}}="a";
      (2.5,.5)*+{\widetilde{K}(\Si C)(\ul{m}_{+})}="b";
      (0,-.5)*+{(\Si C)^{n}}="c";
      (2.5,-.5)*+{\widetilde{K}(\Si C)(\ul{n}_{+})}="d";
      {\ar@{->}_{\phi_*} "a"; "c"};
      {\ar^{\check{Z}} "a"; "b"};
      {\ar_{\check{Z}} "c"; "d"};
      {\ar@{->}^{\phi_*} "b"; "d"};
    \end{xy}
  \end{equation*}
  We begin by noting that this diagram obviously commutes on the
  unique object, so there can exist an icon (see \cref{defn:icon})
  between the two composite pseudofunctors.  The top and right
  composite sends a 1-cell $(x_1, \ldots, x_n)$ to the 1-cell
  \[
  \left\{ 
    \genfrac{}{}{0pt}{}
    {u \mapsto \oplus_{i \in \phi^{-1}(u)} x_i}
    {u,v \mapsto \lambda_{\phi^{-1}(u), \phi^{-1}(v)}}
  \right\}
  \]
  The left and bottom composite then sends $(x_1, \ldots, x_n)$ to the
  1-cell with
  \[
  \left\{ 
    \genfrac{}{}{0pt}{}
    {u \mapsto \oplus_{i \in u} \big(\oplus_{j \in \phi^{-1}(i)} x_j \big)}
    {u,v \mapsto \kappa_{u,v}}
  \right\}
  \]
  where $\kappa_{u,v}$ interleaves the blocks $\big(\oplus_{j \in \phi^{-1}(i)} x_j \big)$.  
  
  There is an invertible 2-cell between these 1-cells which is given
  by the symmetry isomorphism
  \[
  \oplus_{i \in \phi^{-1}(u)} x_i \cong \oplus_{i \in u} \big(\oplus_{j \in \phi^{-1}(i)} x_j \big).
  \]
  Coherence for symmetric monoidal categories, together with the
  naturality of symmetries, implies that the icon axioms hold.
  Further, the same coherence shows that these invertible icons are
  themselves the naturality isomorphisms which constitute a
  pseudonatural transformation between pseudofunctors $\sF \to
  \twocatpi$.
  
  Our final task is to verify that $\check{Z}(\ul{1}_{+})$ is a weak
  equivalence.  It is a simple calculation to check that in fact
  $\check{Z}(\ul{1}_{+})$ induces an isomorphism of 2-categories
  $\widetilde{K}(\Si C)(\ul{1}_{+}) \cong \Si C$.
\end{proof}

\begin{rmk}
  One can check that the equivalence constructed in \cref{naivek_equals_k}
  is pseudonatural in the variable $C$.
\end{rmk}

\subsection{Proof of Theorem
  \ref{thm:sigmak_equals_ksigma}}\label{sec:proof-of-sigma-k-eq-k-sigma}

Given a permutative category $C$, we can construct two
pseudofunctors from $\sF$ to  $\bicat(\De^{\op}, \cat_2)$.
One is the composite
\[
\sF \fto{(\Si C)^{(-)}} \twocatpi \fto{N} \gray(\De^{\op}, \cat_2)\hookrightarrow \bicat(\De^{\op},\cat_2),
\]
where $N$ denotes the nerve functor of
\cref{lem:nerve-is-map-of-2-monads}.  The other is given by
$\Phi(C^{(-)}\circ \sma)$, where
\[
\Phi: \bicat(\sF\times \De^{\op}, \cat_2) \to \bicat(\sF,\bicat(\De^{\op},\cat_2))
\]
is the 2-functor from \cref{notn:bicat_closed}
and $C^{(-)} \circ \sma$
is the composite
\[
\sF \times \De^{\op} \stackrel{\wedge}{\longrightarrow} \sF \stackrel{C^{(-)}}{\longrightarrow} \cat_2.
\]

\begin{prop}\label{prop:psfuns_equal}
  With notation as above, $\Phi(C^{(-)} \circ \sma) = N\circ (\Si
  C)^{(-)}$.
\end{prop}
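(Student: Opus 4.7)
The claim is a literal equality of pseudofunctors $\sF \to \bicat(\De^{\op}, \cat_2)$, so the plan is to unpack both sides and verify they agree on each piece of structure: the assignment on objects of $\sF$ and $\De^{\op}$, the action on morphisms of each, and the pseudofunctor coherence 2-cells. I expect the theorem to hold on the nose because both constructions are ultimately governed by the same data, namely iterated $\oplus$ in $C$ together with the symmetry $\be$, and the reverse lexicographic convention on $\sma$ was chosen to make this match.

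First I would compare both sides on objects. For the LHS, $\Phi(C^{(-)}\circ \sma)(\ulp{n})([p]) = C^{(-)}(\ulp{n}\sma \iota[p]) = C^{(-)}(\ulp{np}) = C^{np}$, using that the Segal inclusion sends $[p]$ to $\ulp{p}$. For the RHS, $(\Si C)^{(-)}(\ulp{n}) = \Si(C^n)$ by the construction in \cref{lem:pgm_to_psf}, and $N(\Si(C^n))_p = \twocattwoi([p],\Si(C^n))$. Since $[p]$ is a free 2-category on $p$ composable 1-cells and $\Si(C^n)$ has a single object with hom-category $C^n$, such a 2-functor is precisely a $p$-tuple of objects in $C^n$, i.e. an element of $(C^n)^p = C^{np}$. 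The reverse lexicographic convention for $\sma$ is precisely what makes these two identifications of $C^{np}$ coincide as products of $np$ copies of $C$ indexed in the same order.

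Next I would check agreement on morphisms in each variable. For a simplicial morphism $\de\cn [q]\to [p]$ in $\De$, the Segal inclusion produces a morphism $\de^{\dagger}\cn \ulp{p} \to \ulp{q}$ in $\sF$, and the LHS applies $C^{(-)}(\id_{\ulp{n}}\sma\de^{\dagger})$. Unpacking \cref{constr:cn}, this is the functor $C^{np}\to C^{nq}$ whose $(j,k)$-projection ($j\in\ul{n}$, $k\in\ul{q}$) is $\oplus$ over the coordinates in $\phi^{-1}(j,k)$, where $\phi=\id\sma\de^{\dagger}$. The RHS applies $N(\Si(C^n))(\de)$, which on a $p$-tuple of $C^n$-objects takes $\oplus$ of adjacent blocks as prescribed by $\de$'s face/degeneracy structure and composition in $\Si(C^n)$. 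A direct comparison using the explicit description of $\de^{\dagger}$ shows these agree on the nose. Similarly, for $\phi\cn \ulp{m}\to \ulp{n}$ at fixed $[p]$, the LHS gives $C^{(-)}(\phi\sma\id_{\ulp{p}})\cn C^{mp}\to C^{np}$, while the RHS gives $N(\Si(\phi_*))_p$ where $\phi_*\cn C^m\to C^n$ is the strong monoidal functor from \cref{constr:cn}. Both send a $p$-tuple $(x_1,\dots,x_p)$ in $C^m$ to the $p$-tuple in $C^n$ whose $k$-th entry has $j$-th coordinate $\oplus_{i\in \phi^{-1}(j)}(x_k)_i$.

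Finally, the pseudofunctor coherence 2-cells on both sides arise from a common source: the symmetry $\be$ used to identify $\psi_*\phi_* \iso (\psi\phi)_*$ by the unique interleaving permutation determined by coherence for symmetric monoidal categories. Since both $\Phi(C^{(-)}\circ\sma)$ and $N\circ(\Si C)^{(-)}$ define their compositors from this same symmetry, they coincide. The main obstacle is bookkeeping: one must carefully match the reverse lexicographic convention for $\sma$, the explicit formula for the $\De^{\op}\to\sF$ inclusion, and the indexing on $(C^n)^p$, and one must verify that the ``pseudofunctor'' nature of $N$ applied to the pseudofunctor $\Si(\phi_*)$ really produces the same invertible icons as $C^{(-)}$'s compositor. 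Once conventions are fixed, every piece on both sides is uniquely determined by the same universal symmetry data, and the two pseudofunctors literally agree.
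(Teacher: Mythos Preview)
Your proposal is correct and follows essentially the same approach as the paper: unpack both pseudofunctors on objects, on morphisms in each variable, and on the coherence 2-cells, and observe that every piece is determined by the symmetry of $C$ together with the chosen smash product convention. The paper's proof is organized slightly differently---it first records the common value on objects as the 2-functor $[p]\mapsto (C^m)^p$ and then separately identifies the pseudonaturality constraint (for $\phi$ against a simplicial map $\al$) and the pseudofunctoriality constraint (for $\psi\circ\phi$) as instances of the symmetry---but the substance is the same as what you wrote.
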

\begin{proof}
  This result follows from a direct comparison of $\Phi(C^{(-)} \circ
  \sma)$ with $N\circ(\Sigma C)^{(-)}$.  Both pseudofunctors send the
  object $\ulp{m}$ in $\sF$ to the 2-functor $\De^\op \to \Cat_2$ given by
  \begin{align*}
    [p] \mapsto & C^{m \cdot p} = (C^m)^p\\
    ([p]\fto{\al} [q]) \mapsto & \big(C^{m \cdot p} \fto{(m \sma \al)_*} C^{m \cdot q}\big).
  \end{align*}
  For $\Phi(C^{(-)} \circ \sma)$ this is immediate.  For
  $N\circ(\Sigma C)^{(-)}$ this follows because $\Sigma C$ has only
  one object and the horizontal composition of cells is given by the
  monoidal product in $C$.

  Both pseudofunctors send a morphism $\phi \cn \ulp{m} \to \ulp{n}$ in
  $\sF$ to the pseudonatural transformation whose component at $[p]
  \in \De^{\op}$ is given by
  \[
  C^{m \cdot p} \fto{(\phi \sma p)_*} C^{n \cdot p}.
  \]
  
  For $\Phi(C^{(-)} \circ \sma)$ it is immediate that the
  pseudonaturality constraint has components given by
  \begin{equation}\label{eq:two-cell-constraint}
    (\ulp{n} \sma \al)_* \circ (\phi \sma [p])_* \iso (\phi \sma \al)_* \iso (\phi \sma [q])_* \circ (\ulp{m} \sma \al)_*
  \end{equation}
  at $\al \cn [p] \to [q]$.  These isomorphisms are the
  pseudofunctoriality constraints of $C^{(-)}$ and are instances of
  the symmetry in $C$ (see \cref{constr:cn}).  A straightforward check
  shows that the pseudofunctoriality constraint of $N\circ(\Sigma
  C)^{(-)}$ is given by the same instances of the symmetry of $C$.

  For a composable pair $\phi\cn \ulp{m} \to \ulp{n}$ and $\psi\cn
  \ulp{n} \to \ulp{k}$, the symmetry of $C$ provides 
  \[
  (\psi \sma [p])_* \circ (\phi \sma [p])_* \cong 
  \big((\psi \circ \phi) \sma [p]\big)_*
  \]
  and these are the components of the pseudofunctoriality of
  $\Phi(C^{(-)} \circ \sma)$.  The same computation holds for $N\circ(\Sigma C)^{(-)}$.
\end{proof}

We are now ready for the main theorem of this section, from which the
proof of \cref{thm:sigmak_equals_ksigma} follows.  Let $Q_j$ be as in
\cref{defn:ktheory}: the left 2-adjoint to the inclusion functor 
\[
j \cn [\sF \times \De^{\op}, \cat_2] \hookrightarrow \bicat(\sF
  \times \De^{\op}, \cat_2).
\]

\begin{thm}\label{thm:sigmak_equals_ksigma_full}
  For any permutative category $C$, there is a zigzag of levelwise
  equivalences between $Q_{j}(C^{(-)}) \circ \wedge$ and $K_{adj}(\Si C)$. 
\end{thm}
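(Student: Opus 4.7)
The plan is to build a zigzag of pseudonatural equivalences
\[
Q_j(C^{(-)}) \circ \wedge \;\xleftarrow{\ \sim\ }\; C^{(-)} \circ \wedge \;\xrightarrow{\ \sim\ }\; K_{adj}(\Si C)
\]
in $\bicat(\sF \times \De^{\op}, \cat_2)$, with $C^{(-)} \circ \wedge$ serving as the common central vertex.  Since a pseudonatural equivalence with target $\cat_2$ is by definition componentwise an equivalence of categories, such a zigzag automatically provides the required levelwise equivalences.

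The left arrow I would obtain by whiskering the unit $\eta\cn C^{(-)} \Rightarrow Q_j(C^{(-)})$ of the 2-adjunction in \cref{prop:functor_2monadic} (applied with $\cA = \sF$ and $\cK = \cat_2$) on the right by the strict functor $\wedge$.  Since the components of $\eta$ are internal equivalences in $\bicat(\sF, \cat_2)$ and whiskering with a strict functor preserves the individual component 1-cells objectwise, the result is a pseudonatural equivalence of the desired form.  For the right arrow I would proceed in three steps.  First, take the unit $(\Si C)^{(-)} \Rightarrow \ul{K}(\Si C) = Q((\Si C)^{(-)})$ of the 2-adjunction in \cref{cor:J_Q_adj}, a pseudonatural equivalence in $\bicat(\sF, \twocatpi)$.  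Second, apply the 2-functor $N$ of \cref{lem:nerve-is-map-of-2-monads} to produce a pseudonatural equivalence $N \circ (\Si C)^{(-)} \Rightarrow N \circ \ul{K}(\Si C)$ in $\bicat(\sF, \bicat(\De^{\op}, \cat_2))$.  Third, apply the biequivalence $\Phi^{-1}$ from \cref{notn:bicat_closed} to obtain a pseudonatural equivalence in $\bicat(\sF \times \De^{\op}, \cat_2)$.  By \cref{prop:psfuns_equal} the source of this transformation is identified with $C^{(-)} \circ \wedge$, while the target is $K_{adj}(\Si C)$ by the definition of the latter.

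The principal subtlety is verifying that $N$ and $\Phi^{-1}$ preserve pseudonatural equivalences.  For $N$ this is immediate from \cref{lem:nerve-is-map-of-2-monads}, since every 2-functor preserves internal equivalences.  For $\Phi^{-1}$, the action on a pseudonatural transformation simply reindexes its components, so that the component of $\Phi^{-1}(\alpha)$ at $(\ul{n}_{+}, [p])$ agrees with the $[p]$-component of the $\ul{n}_{+}$-component of $\alpha$ and thereby remains an equivalence of categories whenever $\alpha$ is componentwise an equivalence.  Once these formal checks are in place, the proof reduces to splicing the two halves at the common vertex $C^{(-)} \circ \wedge$; all of the essential content has been packaged in \cref{prop:functor_2monadic}, \cref{cor:J_Q_adj}, and \cref{prop:psfuns_equal}.
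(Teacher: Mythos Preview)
Your construction of the comparison is essentially the same as the paper's: the composite of your two arrows (inverting the unit on the left) is exactly the pseudonatural equivalence $\alpha$ that the paper assembles.  The difference is in what you output.  You stop with a zigzag in $\bicat(\sF\times\De^{\op},\cat_2)$ whose middle vertex $C^{(-)}\circ\wedge$ is only a \emph{pseudo}functor and whose arrows are only \emph{pseudo}natural.  The paper instead takes your $\alpha$, applies $Q_j$ to it, and flanks the result with counits $\epz$ to obtain a zigzag
\[
Q_j(C^{(-)})\circ\wedge \;\xleftarrow{\epz}\; Q_jj\big(Q_j(C^{(-)})\circ\wedge\big)\;\xrightarrow{Q_j(\alpha)}\;Q_jj\big(K_{adj}(\Si C)\big)\;\xrightarrow{\epz}\;K_{adj}(\Si C)
\]
that lives entirely in $[\sF\times\De^{\op},\cat_2]$.

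This matters for the intended use of the theorem in the proof of \cref{thm:sigmak_equals_ksigma}.  There one pushes the zigzag through $d\,N_*\,\Phi$ and invokes \cref{lem:adj_doesnt_matter}, both of which are formulated for strict diagrams and strict maps of diagrams.  A pseudofunctor at the middle vertex and pseudonatural legs do not feed directly into that machinery.  So while your argument correctly identifies the essential equivalence, you are missing the final strictification step: apply $Q_j$ to your $\alpha$ and use the counits of $Q_j\dashv j$ (which are strict maps and levelwise equivalences by \cref{prop:functor_2monadic} and \cref{rmk:counits}) to land in the strict functor category.  With that addition your proof coincides with the paper's.
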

\begin{proof} 
  The components of the
  unit and counit of the 2-adjunction $Q_j \dashv j$ are internal equivalences in
  $\bicat(\sF \times \De^{\op}, \cat_2)$ by
  \cref{prop:functor_2monadic}.  Assume that
  \[
  \al \cn j\Big(  Q_{j}(C^{(-)}) \circ \wedge \Big) \stackrel{\simeq}{\longrightarrow} j\Big(K_{adj}(\Si C)\Big) 
  \]
  is a pseudonatural equivalence in $\bicat(\sF \times \De^{\op},
  \cat_2)$.  Since a pseudonatural equivalence is an internal
  equivalence in $\bicat(\sF \times \De^{\op}, \cat_2)$, we can apply
  $Q_j$ and get an internal equivalence in $[\sF \times \De^{\op},
  \cat_2]$.  This gives a zigzag
  \[
  Q_{j}(C^{(-)}) \circ \wedge \stackrel{\epz}{\longleftarrow} Q_j j\Big(  Q_{j}(C^{(-)}) \circ \wedge \Big) \xrightarrow{\ Q_j(\al)\ } Q_j j\Big(K_{adj}(\Si C) \Big) \stackrel{\epz}{\longrightarrow} K_{adj}(\Si C)
  \]
  in $[\sF \times \De^{\op}, \cat_2]$ in which the first and third
  arrows are levelwise equivalences as they are internal equivalences
  in $\bicat(\sF \times \De^{\op}, \cat_2)$, and the second arrow is a
  levelwise equivalence as it is an internal equivalence (i.e.,
  2-equivalence) in $[\sF \times \De^{\op}, \cat_2]$.  It only remains
  to construct an equivalence $\al$ as above.

  In order to construct the pseudonatural equivalence $\al$, first
  recall from \cref{defn:naivek} (\ref{it:Kadj}) that
  \[
  K_{adj}(\Si C) = \Phi^{-1}\Big( N \circ Q\big((\Si C)^{(-)}\big) \Big)
  \]
  where $\Phi$ denotes the adjunction of \cref{notn:bicat_closed} and 
  $Q$ denotes the left adjoint constructed in \cref{cor:J_Q_adj}.
  We define $\al$ as the composite below,
  which we explain afterwards. 
   \[
   \begin{array}{rcl}
   j \big( Q_{j}(C^{(-)}) \circ \wedge \big) & \stackrel{=}{\longrightarrow} & jQ_j\big(C^{(-)}\big) \circ \wedge \\
   & \stackrel{\simeq}{\longrightarrow} & C^{(-)} \circ \wedge \\
   & \stackrel{\simeq}{\longrightarrow} & \Phi^{-1}\Big( N \circ (\Si C)^{(-)} \Big) \\
   & \stackrel{\simeq}{\longrightarrow} & \Phi^{-1}\Big( N \circ JQ\big((\Si C)^{(-)}\big) \Big) \\
   & \stackrel{=}{\longrightarrow} & j\Phi^{-1}\Big( N \circ Q\big((\Si C)^{(-)}\big) \Big) \\
   & \stackrel{=}{\longrightarrow} & j K_{adj}(\Si C)
   \end{array}
   \]
   The equality giving the first arrow is a simple calculation.  
   The
   equivalence giving the second arrow is a pseudo-inverse of the unit
   for $Q_j \dashv j$, whiskered by $\wedge$ and hence still an
   equivalence.  The equivalence giving the third arrow is the adjoint
   of the equality in \cref{prop:psfuns_equal}.  The equivalence
   giving the fourth arrow is derived from the unit of $Q \dashv J$
   which is itself an equivalence, so whiskering with $N$ and applying
   $\Phi^{-1}$ still yields an equivalence.  The equality giving the
   fifth arrow follows from the commutativity of \cref{eqn:hom_adjs},
   and the equality giving the final arrow is \cref{defn:naivek}
   (\ref{it:Kadj}).
\end{proof}

\begin{rmk}\label{rmk:naturality}
  The zigzag in \cref{thm:sigmak_equals_ksigma} is natural up to
  homotopy.  More precisely, this zigzag consists of three maps, two
  of which are counits for the 2-adjunction $Q_j \dashv j$.  It is
  easy to see that $C \mapsto C^{(-)}$ sends symmetric, strong
  monoidal functors between permutative categories to pseudonatural
  transformations between their corresponding pseudofunctors $\sF \to \cat_2$,
  so a symmetric, strong monoidal functor $F \cn C \to D$ will yield a
  2-natural transformation
  \[
  Q_{j}(C^{(-)}) \circ \wedge \to Q_{j}(D^{(-)}) \circ \wedge.
  \]
  The counit $\epz$ is strictly natural with respect to such, so the
  first map in our zigzag is strictly natural in symmetric, strong
  monoidal functors. A similar argument holds for $K_{adj}$, so the
  third map in our zigzag is also strictly natural in symmetric,
  strong monoidal functors.  The second map is what is called
  $Q_j(\al)$ in the proof above.  It is more involved, but a careful
  check reveals that each of the maps of which it is a composite is
  pseudonatural in symmetric, strong monoidal functors, and so the
  same will be true after applying $Q_j$.  Thus our zigzag is actually
  pseudonatural in the variable $C$, which in particular implies that
  it is natural up to homotopy when viewed as a zigzag of spectra.
\end{rmk}

\begin{proof}[Proof of \cref{thm:sigmak_equals_ksigma}]
  On one hand, the suspension of $\Ga$-simplicial sets given in
  \cref{defn:suspofgamma} models the stable suspension by
  \cref{prop:suspofgamma}.  Recalling
  \cite{Tho79Homotopy,maythomason}, the $\Gamma$-simplicial set
  $KC=N\circ Q_j (C^{(-)})$ from \cref{defn:ktheory} models the
  $K$-theory spectrum of $C$.  Its suspension as a $\Ga$-simplicial
  set, $\Si K(C)$, is given by composing the diagonal $d$ with
  $\Phi(K(C) \circ \sma)$.  By naturality of $\Phi$ in its target
  2-category, this is given by $d N_* \Phi(Q_j (C^{(-)}) \circ
  \wedge)$.  By \cref{lem:adj_doesnt_matter}, a levelwise weak
  equivalence of functors $X,Y \cn \sF \times \De^\op \to \cat_2$
  induces a levelwise weak equivalence between $dN_*\Phi (X)$ and
  $dN_*\Phi (Y)$.  Therefore it suffices to examine $Q_j (C^{(-)})
  \circ \sma$.  On the other hand, in \cref{defn:naivek} we have the $\Ga$-2-category
  $\ul{K}(\Si C) = Q\big( (\Si C)^{(-)} \big)$
  and the related adjoint $K_{adj}(\Si C) = \Phi^{-1} \big(N \circ \ul{K}(\Si C)\big)$.  \Cref{naivek_equals_k} shows that $d N_* \Phi(K_{adj}(\Si
  C))$ models the $K$-theory spectrum of $\Si C$.  Finally, the result
  follows by \cref{thm:sigmak_equals_ksigma_full}, which shows that
  there is a zigzag of levelwise equivalences between $Q_j (C^{(-)})
  \circ \sma$ and $K_{adj}(\Si C)$.
\end{proof}

\bibliographystyle{amsalpha2}
\bibliography{nostrskelmod}%

\providecommand{\bysame}{\leavevmode\hbox to3em{\hrulefill}\thinspace}
\providecommand{\MR}{\relax\ifhmode\unskip\space\fi MR }
\providecommand{\MRhref}[2]{%
  \href{http://www.ams.org/mathscinet-getitem?mr=#1}{#2}
}
\providecommand{\doi}[1]{%
  doi:\href{https://dx.doi.org/#1}{#1}}
\providecommand{\arxiv}[1]{%
  arXiv:\href{https://arxiv.org/abs/#1}{#1}}
\providecommand{\href}[2]{#2}
\begin{thebibliography}{Gur13b}

\bibitem[BD95]{BD1995Higher}
J.~C. Baez and J.~Dolan, \emph{Higher-dimensional algebra and topological
  quantum field theory}, J. Math. Phys. \textbf{36} (1995), no.~11, 6073--6105.
  \doi{10.1063/1.531236}

\bibitem[BD98]{BD1998Higher}
J.~C. Baez and J.~Dolan, \emph{Higher-dimensional algebra {III}. $n$-categories
  and the algebra of opetopes}, Advances in Mathematics \textbf{135} (1998),
  no.~2, 145--206. \doi{10.1006/aima.1997.1695}

\bibitem[BL04]{BL2004Higher}
J.~C. Baez and A.~D. Lauda, \emph{Higher-dimensional algebra. {V}. 2-groups},
  Theory Appl. Categ. \textbf{12} (2004), 423--491.

\bibitem[Bar14]{Bar14Quasistrict}
B.~Bartlett, \emph{Quasistrict symmetric monoidal 2-categories via wire
  diagrams}, 2014. \arxiv{1409.2148v1}

\bibitem[Bec69]{Bec1969Distributive}
J.~Beck, \emph{Distributive laws}, Sem. on {T}riples and {C}ategorical
  {H}omology {T}heory ({ETH}, {Z}\"urich, 1966/67), Springer, Berlin, 1969,
  pp.~119--140.

\bibitem[BGK16]{bhardwaj2016state}
L.~Bhardwaj, D.~Gaiotto, and A.~Kapustin, \emph{State sum constructions of
  spin-{TFT}s and string net constructions of fermionic phases of matter},
  2016. \arxiv{1605.01640}

\bibitem[BKP89]{BKP1989Two}
R.~{Blackwell}, G.~{Kelly}, and A.~{Power}, \emph{{Two-dimensional monad
  theory.}}, {J. Pure Appl. Algebra} \textbf{59} (1989), no.~1, 1--41
  (English). \doi{10.1016/0022-4049(89)90160-6}

\bibitem[BG15a]{BG2015cocategorical}
J.~Bourke and N.~Gurski, \emph{A cocategorical obstruction to tensor products
  of {G}ray-categories}, Theory Appl. Categ. \textbf{30} (2015), No. 11,
  387--409.

\bibitem[BG15b]{BG2015Gray}
\bysame, \emph{The {G}ray tensor product via factorisation}, 2015.
  \arxiv{1508.07789v1}

\bibitem[BF78]{BF1978}
A.~K. Bousfield and E.~M. Friedlander, \emph{Homotopy theory of {$\Gamma
  $}-spaces, spectra, and bisimplicial sets}, Geometric applications of
  homotopy theory ({P}roc. {C}onf., {E}vanston, {I}ll., 1977), {II}, Lecture
  Notes in Math., vol. 658, Springer, Berlin, 1978, pp.~80--130.

\bibitem[BS76]{BS76Ggroupoids}
R.~Brown and C.~B. Spencer, \emph{{$G$}-groupoids, crossed modules and the
  fundamental groupoid of a topological group}, Nederl. Akad. Wetensch. Proc.
  Ser. A {\bf 79}=Indag. Math. \textbf{38} (1976), no.~4, 296--302.

\bibitem[CCG10]{CCG10Nerves}
P.~Carrasco, A.~M. Cegarra, and A.~R. Garz{\'o}n, \emph{Nerves and classifying
  spaces for bicategories}, Algebr. Geom. Topol. \textbf{10} (2010), no.~1,
  219--274. \doi{10.2140/agt.2010.10.219}

\bibitem[CG07]{CG2007periodicI}
E.~Cheng and N.~Gurski, \emph{The periodic table of {$n$}-categories for low
  dimensions. {I}. {D}egenerate categories and degenerate bicategories},
  Categories in algebra, geometry and mathematical physics, Contemp. Math.,
  vol. 431, Amer. Math. Soc., Providence, RI, 2007, pp.~143--164.
  \doi{10.1090/conm/431/08270}

\bibitem[CG11]{CG2011periodicII}
\bysame, \emph{The periodic table of {$n$}-categories {II}: {D}egenerate
  tricategories}, Cah. Topol. G\'eom. Diff\'er. Cat\'eg. \textbf{52} (2011),
  no.~2, 82--125.

\bibitem[CG14]{CG2014Iterated}
\bysame, \emph{Iterated icons}, Theory and Applications of Categories
  \textbf{29} (2014), no.~32, 929--977.

\bibitem[Con84]{conduche84}
D.~Conduch{\'e}, \emph{Modules crois{\'e}s g{\'e}n{\'e}ralis{\'e}s de longueur
  2}, Journal of Pure and Applied Algebra \textbf{34} (1984), no.~2-3,
  155--178.

\bibitem[EM54a]{EM54groupsII}
S.~Eilenberg and S.~MacLane, \emph{On the groups {$H(\Pi,n)$}. {II}. {M}ethods
  of computation}, Ann. of Math. (2) \textbf{60} (1954), 49--139.

\bibitem[EM54b]{EM54groupsIII}
\bysame, \emph{On the groups {$H(\Pi,n)$}. {III}}, Ann. of Math. (2)
  \textbf{60} (1954), 513--557.

\bibitem[Fre14]{freed2014anomalies}
D.~S. Freed, \emph{Anomalies and invertible field theories}, String-{M}ath
  2013, Proc. Sympos. Pure Math., vol.~88, Amer. Math. Soc., Providence, RI,
  2014, pp.~25--45. \doi{10.1090/pspum/088/01462}

\bibitem[FHT11]{FHT2011loop}
D.~S. Freed, M.~J. Hopkins, and C.~Teleman, \emph{Loop groups and twisted
  {$K$}-theory {I}}, J. Topol. \textbf{4} (2011), no.~4, 737--798.
  \doi{10.1112/jtopol/jtr019}

\bibitem[GPS95]{GPS95Coherence}
R.~Gordon, A.~J. Power, and R.~Street, \emph{Coherence for tricategories}, vol.
  117, Mem. Amer. Math. Soc., no. 558, Amer. Math. Soc., 1995.

\bibitem[GM95]{GM95Generalized}
J.~P.~C. Greenlees and J.~P. May, \emph{Generalized {T}ate cohomology}, Mem.
  Amer. Math. Soc. \textbf{113} (1995), no.~543, viii+178.
  \doi{10.1090/memo/0543}

\bibitem[Gro83]{grothendieck1983pursuing}
A.~Grothendieck, \emph{Pursuing stacks}, unpublished manuscript, 1983.

\bibitem[GW14]{gu2014symmetry}
Z.-C. Gu and X.-G. Wen, \emph{Symmetry-protected topological orders for
  interacting fermions: fermionic topological nonlinear $\sigma$ models and a
  special group supercohomology theory}, Physical Review B \textbf{90} (2014),
  no.~11, 115141.

\bibitem[Gur09]{Gur09Nerves}
N.~Gurski, \emph{Nerves of bicategories as stratified simplicial sets}, J. Pure
  Appl. Algebra \textbf{213} (2009), no.~6, 927--946.
  \doi{10.1016/j.jpaa.2008.10.011}

\bibitem[Gur12]{Gur2012Biequivalences}
\bysame, \emph{Biequivalences in tricategories}, Theory Appl. Categ.
  \textbf{26} (2012), No. 14, 349--384.

\bibitem[Gur13a]{Gurski13Coherence}
\bysame, \emph{Coherence in three-dimensional category theory}, Cambridge
  Tracts in Mathematics, vol. 201, Cambridge University Press, Cambridge, 2013.
  \doi{10.1017/CBO9781139542333}

\bibitem[Gur13b]{Gur2013monoidal}
\bysame, \emph{The monoidal structure of strictification}, Theory Appl. Categ.
  \textbf{28} (2013), No. 1, 1--23.

\bibitem[GJO15]{GJO2015KTheory}
N.~Gurski, N.~Johnson, and A.~M. Osorno, \emph{K-theory for 2-categories},
  2015. \arxiv{1503.07824v1}

\bibitem[GJO17]{GJOshh2}
\bysame, \emph{The 2-dimensional stable homotopy hypothesis}, 2017.

\bibitem[GO13]{GO12Infinite}
N.~Gurski and A.~M. Osorno, \emph{Infinite loop spaces, and coherence for
  symmetric monoidal bicategories}, Adv. Math. \textbf{246} (2013), 1--32.
  \doi{10.1016/j.aim.2013.06.028}

\bibitem[Jar91]{Jar91super}
J.~F. Jardine, \emph{Supercoherence}, J. Pure Appl. Algebra \textbf{75} (1991),
  no.~2, 103--194. \doi{10.1016/0022-4049(91)90122-I}

\bibitem[JO12]{JO12Modeling}
N.~Johnson and A.~M. Osorno, \emph{Modeling stable one-types}, Theory Appl.
  Categ. \textbf{26} (2012), No. 20, 520--537.

\bibitem[JS93]{JS1993btc}
A.~Joyal and R.~Street, \emph{Braided tensor categories}, Adv. Math.
  \textbf{102} (1993), no.~1, 20--78. \doi{10.1006/aima.1993.1055}

\bibitem[Kah66]{Kah66spectral}
D.~W. Kahn, \emph{The spectral sequence of a {P}ostnikov system}, Comment.
  Math. Helv. \textbf{40} (1966), 169--198.

\bibitem[Kap15]{Kap2015Supergeometry}
M.~Kapranov, \emph{Supergeometry in mathematics and physics}, 2015.
  \arxiv{1512.07042v1}

\bibitem[Kel74]{Kelly1974Doctrinal}
G.~M. Kelly, \emph{Doctrinal adjunction}, Category {S}eminar ({P}roc. {S}em.,
  {S}ydney, 1972/1973), Springer, Berlin, 1974, pp.~257--280. Lecture Notes in
  Math., Vol. 420.

\bibitem[KS74]{KS74Review}
G.~M. Kelly and R.~Street, \emph{Review of the elements of {$2$}-categories},
  Category {S}eminar ({P}roc. {S}em., {S}ydney, 1972/1973), Springer, Berlin,
  1974, pp.~75--103. Lecture Notes in Math., Vol. 420.

\bibitem[Lac02]{Lac02Codescent}
S.~Lack, \emph{Codescent objects and coherence}, J. Pure Appl. Algebra
  \textbf{175} (2002), no.~1-3, 223--241, Special volume celebrating the 70th
  birthday of Professor Max Kelly. \doi{10.1016/S0022-4049(02)00136-6}

\bibitem[Lac10a]{Lac10Companion}
\bysame, \emph{A 2-categories companion}, Towards higher categories, IMA Vol.
  Math. Appl., vol. 152, Springer, New York, 2010, pp.~105--191.
  \doi{10.1007/978-1-4419-1524-5\_4}

\bibitem[Lac10b]{Lac10Icons}
\bysame, \emph{Icons}, Appl. Categ. Structures \textbf{18} (2010), no.~3,
  289--307. \doi{10.1007/s10485-008-9136-5}

\bibitem[LP08]{LP20082nerves}
S.~Lack and S.~Paoli, \emph{2-nerves for bicategories}, $K$-Theory \textbf{38}
  (2008), no.~2, 153--175. \doi{10.1007/s10977-007-9013-2}

\bibitem[Lod82]{Loday82}
J.-L. Loday, \emph{Spaces with finitely many non-trivial homotopy groups}, J.
  Pure Appl. Algebra \textbf{24} (1982), no.~2, 179--202.
  \doi{10.1016/0022-4049(82)90014-7}

\bibitem[MW50]{MW19503type}
S.~MacLane and J.~H.~C. Whitehead, \emph{On the {$3$}-type of a complex}, Proc.
  Nat. Acad. Sci. U. S. A. \textbf{36} (1950), 41--48.

\bibitem[Man10]{Man10Inverse}
M.~A. Mandell, \emph{An inverse {$K$}-theory functor}, Doc. Math. \textbf{15}
  (2010), 765--791.

\bibitem[May74]{May1974Einfty}
J.~P. May, \emph{{$E_{\infty }$} spaces, group completions, and permutative
  categories}, New developments in topology ({P}roc. {S}ympos. {A}lgebraic
  {T}opology, {O}xford, 1972), Cambridge Univ. Press, London, 1974, pp.~61--93.
  London Math. Soc. Lecture Note Ser., No. 11.

\bibitem[May78]{may78spectra}
\bysame, \emph{The spectra associated to permutative categories}, Topology
  \textbf{17} (1978), no.~3, 225--228. \doi{10.1016/0040-9383(78)90027-7}

\bibitem[MT78]{maythomason}
J.~P. May and R.~Thomason, \emph{The uniqueness of infinite loop space
  machines}, Topology \textbf{17} (1978), no.~3, 205--224.
  \doi{10.1016/0040-9383(78)90026-5}

\bibitem[McC00]{McCru00Balanced}
P.~McCrudden, \emph{Balanced coalgebroids}, Theory and Applications of
  Categories \textbf{7} (2000), no.~6, 71--147.

\bibitem[MT68]{MT1968Cohomology}
R.~E. Mosher and M.~C. Tangora, \emph{Cohomology operations and applications in
  homotopy theory}, Harper \& Row, Publishers, New York-London, 1968.

\bibitem[Pow89]{power1989coherence}
A.~J. Power, \emph{A general coherence result}, J. Pure Appl. Algebra
  \textbf{57} (1989), no.~2, 165--173. \doi{10.1016/0022-4049(89)90113-8}

\bibitem[SP11]{Sch2011Classification}
C.~Schommer-Pries, \emph{The classification of two-dimensional extended
  topological field theories}, 2011. \arxiv{1112.1000v2}

\bibitem[Seg74]{Seg74Categories}
G.~Segal, \emph{Categories and cohomology theories}, Topology \textbf{13}
  (1974), 293--312.

\bibitem[S{\'{\i}}n75]{sinh1975}
H.~X. S{\'{\i}}nh, \emph{{$Gr$-catégories}}, Ph.D. thesis, Université Paris
  VII, 1975.

\bibitem[Str80]{street1980fb}
R.~Street, \emph{{Fibrations in bicategories}}, Cahiers de Topologie et
  Geometrie Differentielle \textbf{21} (1980), no.~2, 111--160.

\bibitem[Tho79]{Tho79Homotopy}
R.~W. Thomason, \emph{Homotopy colimits in the category of small categories},
  Math. Proc. Cambridge Philos. Soc. \textbf{85} (1979), no.~1, 91--109.
  \doi{10.1017/S0305004100055535}

\bibitem[Tho95]{Tho95Symmetric}
\bysame, \emph{Symmetric monoidal categories model all connective spectra},
  Theory Appl. Categ. \textbf{1} (1995), No.\ 5, 78--118 (electronic).

\bibitem[Whi49]{Whi1949Combinatorial}
J.~H.~C. Whitehead, \emph{Combinatorial homotopy. {II}}, Bull. Amer. Math. Soc.
  \textbf{55} (1949), 453--496.

\end{thebibliography}

\end{document}